\newtheorem{theo}{Theorem} % Pour les theoremes principaux
\newtheorem{lemma}{Lemma}[section]
\newtheorem{prop}[lemma]{Proposition}
\newtheorem{corol}[lemma]{Corollary}
\newtheorem{claim}[lemma]{Claim}
\theoremstyle{remark}
\newtheorem{remark}[lemma]{Remark}
\theoremstyle{definition}
\newcommand{\dd}{\mathsf{d}}
\newcommand{\DD}{\mathsf{d}}
\newcommand{\CC}{\mathbb{C}}
\newcommand{\NN}{\mathbb{N}}
\newcommand{\RR}{\mathbb{R}}
\newcommand{\eps}{\varepsilon}
\newcommand{\DDD}{\mathcal{D}}
\newcommand{\MMM}{\mathcal{M}}
\newcommand{\SSS}{\mathcal{S}}
\newcommand{\xx}{X}
\newcommand{\YYY}{\mathcal{Y}}
\newcommand{\ZZZ}{\mathcal{Z}}
\newcommand{\tR}{\widetilde{R}}
\newcommand{\tf}{\widetilde{f}}
\newcommand{\ttau}{\tilde{\tau}}
\newcommand{\tpsi}{\widetilde{\psi}}
\newcommand{\tsigma}{\tilde{\sigma}}
\newcommand{\tgam}{\tilde{\gamma}}
\newcommand{\tw}{\tilde{w}}
\newcommand{\tW}{\widetilde{W}}
\newcommand{\aexp}{a}
\newcommand{\tlbda}{\widetilde{\lambda}}
\newcommand{\tx}{\widetilde{x}}
\newcommand{\hdot}{\dot{H}^1}
\newcommand{\Hdot}{\dot{H}^1(\RR^N)}
\DeclareMathOperator{\supp}{supp}
\DeclareMathOperator{\vect}{span}
\newcommand{\ds}{\displaystyle}
\numberwithin{equation}{section} %pour numeroter les equations par section
\title[Dynamic for energy critical wave]{Dynamic of threshold solutions for energy-critical wave equation}
\author[T.~Duyckaerts]{Thomas Duyckaerts$^1$}
\email{thomas.duyckaerts@u-cergy.fr}
\address{Thomas Duyckaerts\\
Universit{\'e} de Cergy-Pontoise\\
D\'epartement de Math\'ematiques\\ 
Site de Saint Martin, 2 avenue Adolphe-Chauvin\\ 
95302 Cergy-Pontoise cedex, France. }
\author[F.~Merle]{Frank Merle$^2$}
\thanks{$^1$Cergy-Pontoise (UMR 8088)}
\thanks{$^2$Cergy-Pontoise, IHES, CNRS}
\thanks{This work was partially supported by the French ANR Grant ONDNONLIN}
\date{\today}
\begin{document}

\begin{abstract}
We consider the energy-critical non-linear focusing wave equation in dimension $N=3,4,5$. An explicit stationnary solution, $W$, of this equation is known.
 In \cite{KeMe06Pb}, the energy $E(W,0)$ has been shown to be a threshold for the dynamical behavior of solutions of the equation. In the present article we study the dynamics at the critical level $E(u_0,u_1)=E(W,0)$ and classify the corresponding solutions. We show in particular the existence of two special solutions, connecting different behaviors for negative and positive times. Our results are analoguous to \cite{DuMe07P}, which treats the energy-critical non-linear focusing radial Schr\"odinger equation, but without any radial assumption on the data. We also refine the understanding of the dynamical behavior of the special solutions.
\end{abstract}

\maketitle

%\tableofcontents

\section{Introduction and main results}
We consider the focusing energy-critical wave equation on an interval $I$ ($0\in I$)
\begin{equation}
\label{CP}
\left\{ 
\begin{gathered}
\partial_t^2 u -\Delta u-|u|^{\frac{4}{N-2}}u=0,\quad (t,x)\in I\times \RR^N\\
u_{\restriction t=0}=u_0\in \hdot,\quad \partial_t u_{\restriction t=0}=u_1\in L^2.
\end{gathered}\right.
\end{equation}
where $u$ is real-valued, $N\in\{3,4,5\}$, and $\hdot:=\Hdot$.
The theory of the Cauchy problem for \eqref{CP} was developped in many papers (see \cite{Pecher84,GiSoVe92,LiSo95,ShSt94,ShSt98,Sogge95,Kapitanski94}). Namely, if $(u_0,u_1)\in \hdot\times L^2$, there exists an unique solution defined on a maximal interval $I=(-T_-(u),T_+(u))$ and the energy
$$ E(u(t),\partial_tu(t))=\frac{1}{2} \int |\partial_t u(t,x)|^2dx+\frac{1}{2} \int |\nabla u(t,x)|^2dx-\frac{1}{2^*}|u(t,x)|^{2^*}dx$$
is constant ($2^*:=\frac{2N}{N-2}$ is the critical exponent for the $H^1$-Sobolev embedding in $\RR^N$).

An explicit solution of \eqref{CP} is the stationnary solution in $\hdot$ (but in $L^2$ only if $N\geq 5$)
\begin{equation}
\label{defW}
W:=\frac{1}{\left(1+\frac{|x|^2}{N(N-2)}\right)^{\frac{N-2}{2}}}.
\end{equation}
The works of Aubin and Talenti \cite{Au76,Ta76}, give the following elliptic characterization of $W$
(throughout the paper we denote by $\|\cdot\|_p$ the $L^p$ norm on $\RR^N$)
\begin{gather}
\label{SobolevIn}
\forall u\in\hdot,\quad \|u\|_{2^*}\leq C_N\|\nabla u\|_{2}\\
\label{CarW}
\|u\|_{2*}=C_N\|\nabla u\|_{2}\Longrightarrow \exists \;\lambda_0>0,x_0\in\RR^N,\delta_0\in\{-1,+1\}\quad u(x)=\frac{\delta_0}{\lambda_0^{(N-2)/2}} W\Big(\frac{x+x_0}{\lambda_0}\Big),
\end{gather}
where $C_N$ is the best Sobolev constant in dimension $N$.\par
The dynamical behavior of some solutions of \eqref{CP} was recently described in \cite{KrSc05}, \cite{KrScTa07P} (in the radial three-dimensional case) and \cite{KeMe06Pb}. In \cite{KeMe06Pb}, Kenig and Merle has shown the important role of $W$, whose energy $E(W,0)=\frac{1}{NC_N^N}$ %and $\|\nabla W\|^2_{L^2}=\frac{1}{C_N^N}$ 
is an energy threshold for the dynamics in the following sense. Let $u$ be a solution of \eqref{CP}, not necessarily radial, such that
\begin{equation}
\label{hypsubcrit}
E(u_0,u_1)<E(W,0).
\end{equation}
Then 
\begin{itemize}
\item if $\|\nabla u_0\|_{2}<\|\nabla W\|_{2}$, we have
%\begin{equation}
%\label{scattering}
$I=\RR \text{ and } \|u\|_{L^{\frac{2(N+1)}{N-2}}_{t,x}}<\infty,$
%\end{equation}
which implies from the linear theory of \eqref{CP} that the solution scatters;
\item if $\|\nabla u_0\|^2_2>\|\nabla W\|_2^2$ then
%\begin{equation}
%\label{explosion}
$T_+<\infty \text{ and } T_-<\infty.$
%\end{equation}
\end{itemize}

Our goal (as is \cite{DuMe07P} for the nonlinear Schr\"odinger equation in the radial case)  is to give a classification of solutions of \eqref{CP}, not necessarily radial, with \textit{critical} energy, that is with initial condition $(u_0,u_1)\in \hdot\times L^2$ such that
\begin{equation*}
E(u_0,u_1)=E(W,0).
\end{equation*}
The stationnary solution $W$ belongs to this energy level, is globally defined and does not scatter.
Another example of special solutions is given by the following. 
\begin{theo}[Connecting orbits]
\label{th.exist}
Let $N\in\{3,4,5\}$. There exist radial solutions $W^-$ and $W^+$ of \eqref{CP}, with initial conditions $\left(W^{\pm}_0,W^{\pm}_1\right)\in \hdot\times L^2$ such that
\begin{gather}
\label{ex.energy}
E(W,0)=E(W^+_0,W^-_1)=E(W^-_0,W^{-}_1),\\
\label{ex.lim}
T_+(W^-)=T_+(W^+)=+\infty \text{ and }\lim_{t\rightarrow +\infty} W^{\pm}(t)=W \text{ in } \hdot,\\
\label{ex.sub}
\big\|\nabla W^{-}\big\|_{2}<\|\nabla W\|_{2},\quad  T_-(W^-)=+\infty,\quad \|W^-\|_{L^{\frac{2(N+1)}{N-2}}\left((-\infty,0)\times\RR^N\right)}<\infty,\\
\label{ex.super}
\big\|\nabla W^{+}\big\|_{2}>\|\nabla W\|_{2},\quad T_-(W^+)<+\infty.
\end{gather}
\end{theo}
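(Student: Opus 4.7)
The plan is to construct $W^{\pm}$ as perturbations of the stationary solution $W$ along the exponentially decaying mode of the linearized flow. Writing $u = W + v$, \eqref{CP} becomes
$$\partial_t^2 v + Lv = N(v),\qquad L := -\Delta - \tfrac{N+2}{N-2}W^{4/(N-2)},$$
with $N(v)$ the nonlinear remainder. A first step is the spectral analysis of $L$ on $L^2(\RR^N)$: a classical variational argument (analogous to the Schr\"odinger case of \cite{DuMe07P}) produces a unique simple negative eigenvalue $-e_0^2$ with a radial, positive, exponentially decaying eigenfunction $\YYY$; the kernel is generated by the symmetries of $W$ (reduced to the scaling direction in the radial setting), and the rest of $\sigma(L)$ is positive. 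The linearized wave equation therefore admits two modes $e^{\pm e_0 t}\YYY$, and $e^{-e_0 t}\YYY$ is the decaying direction along which we will perturb.

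Given a small parameter $a \in \RR$, I would then build formal radial approximate solutions
$$U^a_k(t,x) = W(x) + \sum_{j=1}^k a^j e^{-j e_0 t}\varphi_j(x),\qquad \varphi_1 = \YYY,$$
with $\varphi_j$ determined recursively by $(L - j^2 e_0^2)\varphi_j = F_j(\varphi_1,\dots,\varphi_{j-1})$, the right-hand side coming from the Taylor expansion of $|W+v|^{4/(N-2)}(W+v)$ around $W$. Since $j^2 e_0^2 \notin \sigma(L)$ for $j\geq 2$, each $\varphi_j$ exists in suitable weighted Sobolev spaces. For $k$ large enough (depending on the regularity of the nonlinearity, which is an issue for $N=5$), $U^a_k$ solves \eqref{CP} up to an error of order $e^{-(k+1)e_0 t}$. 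A contraction argument for the remainder $h = u - U^a_k$ on $[T,+\infty)$, based on Strichartz estimates for the potential-perturbed linear wave equation $\partial_t^2 h + Lh = 0$, then upgrades $U^a_k$ to an actual radial solution $u^a$ of \eqref{CP} with $(u^a(t),\partial_t u^a(t)) \to (W,0)$ in $\hdot \times L^2$ as $t\to +\infty$. Since $\YYY > 0$, for $t$ large the sign of $a$ controls whether $\|\nabla u^a(t)\|_2$ sits below or above $\|\nabla W\|_2$, and conservation of energy forces $E(u^a_0,u^a_1)=E(W,0)$. Setting $W^- := u^a$ for some $a<0$ and $W^+ := u^a$ for some $a>0$ gives \eqref{ex.energy}, \eqref{ex.lim}, and the gradient bounds of \eqref{ex.sub} and \eqref{ex.super}.

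The backward-in-time statements require further input. The subcritical (resp.\ supercritical) gradient condition is trapped on the whole interval of existence by the variational analysis of \cite{KeMe06Pb} specialized to the critical level $E=E(W,0)$. For $W^+$, trapping combined with a convexity argument on $\int |W^+|^2\,dx$ (a Levine/Kenig--Merle-type concavity of a suitable time integral, with the degeneracy ruled out by $W^+\not\equiv W$) yields $T_-(W^+)<+\infty$. For $W^-$, trapping gives global existence backward in time, and scattering as $t\to-\infty$ is obtained by a concentration-compactness plus rigidity scheme: if $W^-$ did not scatter, one would extract a nonzero critical element solution $v$ with trajectory precompact modulo symmetries, satisfying $E(v)=E(W,0)$ and $\|\nabla v\|_2<\|\nabla W\|_2$, and derive a contradiction from a localized virial identity together with the known behavior $W^-(t)\to W$ as $t\to+\infty$.

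The main obstacle is this last step, the rigidity argument giving backward scattering of $W^-$: the strict energy inequality used in \cite{KeMe06Pb} is lost at the critical level, so one must refine the virial analysis at $E=E(W,0)$ and, in particular, use the modulational structure around $W$ to exclude the possibility that the compact critical element produced by the compactness step is itself a modulated copy of $W$.
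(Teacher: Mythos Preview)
Your construction of $W^{\pm}$ via approximate profiles $U^a_k=W+\sum_{j\ge 1}e^{-je_0t}\varphi_j^a$ followed by a fixed-point argument is exactly the paper's approach (Sections~\ref{sub.approx}--\ref{sub.contract}), and your spectral description of $L$ matches Proposition~\ref{prop.spectral}. The identification of $W^-$'s backward scattering as requiring a critical-level rigidity argument is also correct: this is precisely the content of Proposition~\ref{prop.CVexpu}, whose proof rests on the compactness of Lemma~\ref{lem.compact} and the localized virial Lemma~\ref{lem.virial}, and whose Step~3 shows that a solution converging to $W$ at $+\infty$ cannot also fail to scatter at $-\infty$ (else $\DD\equiv 0$, forcing $u\equiv W$).

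There is, however, a genuine gap in your treatment of $T_-(W^+)<\infty$. You propose a Levine/Kenig--Merle convexity argument on $\int |W^+|^2$, but for $N=3,4$ the solution $W^+$ is \emph{not} in $L^2$ (since $W\notin L^2$ and $W^+(t)\to W$ in $\hdot$), so this quantity is infinite and the standard concavity argument does not even get started. The paper singles this out as a nontrivial result (see the remark after Theorem~\ref{th.exist}: in the analogous NLS problem the authors could only handle $N=5$). The actual proof (Subsection~\ref{subW+}) works with the localized quantity $y_R(t)=\int (W^+)^2\varphi_R$ and must control the error terms coming from the cutoff. For large positive $t$ one exploits the explicit asymptotics $W^+=W+e^{-e_0t}\YYY+O(e^{-2e_0t})$ together with $\int W\YYY>0$ to get $y_R''\ge c\,\DD(t)$ and $y_R'(t_0)\le -2c_0$ uniformly in $R\ge R_0$. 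For $t\le t_0$ one propagates the localization via finite speed of propagation on a time window of length $\sim R$. A differential-inequality argument (Claim in Step~3) then forces $-y_R'(t)\lesssim y_R(t_0)/R$, which for $N=4,5$ contradicts $-y_R'(t_0)\ge 2c_0$ since $y_R(t_0)/R\to 0$. The case $N=3$ is borderline ($y_R(t_0)\sim R$) and requires two further steps: first showing $\int_{-\infty}^{+\infty}\DD(t)\,dt<\infty$, then using the two-sided bound $|y_R''|\le C\DD$ to force $\DD\equiv 0$, a contradiction. None of this is captured by ``concavity of $\int |W^+|^2$''; you should flag the $L^2$ failure as the central obstruction and outline the localized argument instead.
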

\begin{remark}
Our construction gives a precise asymptotic development of $W^{\pm}$ near $t=+\infty$. Indeed there exists an eigenvalue $e_0>0$ of the linearized operator near $W$, such that, if $\YYY\in \SSS(\RR^N)$ is the corresponding eigenfunction with the appropriate normalization,
\begin{equation}
\label{as.dev}
\left\|\nabla\left(W^{\pm}(t)-W\pm e^{-e_0 t}\YYY\right)\right\|_{L^2}+\left\|\partial_t\left(W^{\pm}(t)-W\pm e^{-e_0 t}\YYY\right)\right\|_{L^2}\leq Ce^{-2 e_0t}.
\end{equation}
We refer to \eqref{defWk} and \eqref{CondWa2} for the development at all orders in $e^{-e_0t}$.
\end{remark}
\begin{remark}
Similar solutions were constructed for NLS in \cite{DuMe07P}. However, in the NLS case, we were not able to prove that $T_-(W^+)<\infty$ except in the case $N=5$. We see this fact, in particular in the case $N=3$, as a nontrivial result. Note that $W^+$ is not in $L^2$ except for $N=5$, so that case \eqref{theo.super} of Theorem \ref{th.classif} below does not apply.
\end{remark}
Our next result is that $W$, $W^-$ and $W^+$ are, up to the symmetry of the equation, the only examples of new behavior at the critical level.
\begin{theo}[Dynamical classification at the critical level]
\label{th.classif}
Let $N\in\{3,4,5\}$. Let $(u_0,u_1)\in \hdot\times L^2$ such that 
\begin{equation}
\label{threshold}
E(u_0,u_1)=E(W,0)=\frac{1}{NC_N^N}.
\end{equation}
Let $u$ be the solution of \eqref{CP} with initial conditions $(u_0,u_1)$ 
and $I$ its maximal interval of definition. Then the following holds:
\begin{enumerate}
\item \label{theo.sub} If $\ds \int |\nabla u_0|^2<\int |\nabla W|^2=\frac{1}{C_N^N}$ then $I=\RR$. Furthermore, either $u=W^-$ up to the symmetry of the equation, or $\|u\|_{L^{\frac{2(N+1)}{N-2}}_{t,x}}<\infty$.
\item \label{theo.crit} If $\ds \int |\nabla u_0|^2=\int |\nabla W|^2$ then $u=W$ up to the symmetry of the equation.
\item \label{theo.super} If $\ds \int |\nabla u_0|^2>\int |\nabla W|^2$, and $u_0\in L^2$ then either $u=W^+$ up to the symmetry of the equation, or $I$ is finite.
\end{enumerate}
\end{theo}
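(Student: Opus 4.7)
\bigskip

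\noindent\textbf{Proof proposal for Theorem \ref{th.classif}.}

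The plan is to treat the three cases separately, with Case \eqref{theo.crit} being essentially immediate, and Cases \eqref{theo.sub} and \eqref{theo.super} sharing a common compactness-rigidity mechanism organized around modulation near $W$.

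First, for Case \eqref{theo.crit}, I would argue directly from the Sobolev inequality. The hypothesis $\|\nabla u_0\|_2 = \|\nabla W\|_2$ combined with $E(u_0,u_1)=E(W,0)$ gives $\tfrac12\|u_1\|_2^2 = \tfrac{1}{2^*}\|u_0\|_{2^*}^{2^*} - \tfrac{1}{2^*}\|W\|_{2^*}^{2^*}$. But \eqref{SobolevIn} forces $\|u_0\|_{2^*}\leq \|W\|_{2^*}$, so $u_1\equiv 0$ and $u_0$ is an extremizer of the Sobolev embedding; then \eqref{CarW} identifies $u_0$ with $W$ up to a scaling, translation, and sign, and uniqueness of the Cauchy problem propagates this to $u=W$ for all time.

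For Cases \eqref{theo.sub} and \eqref{theo.super}, the core is a three-stage argument. \emph{Stage 1 (trapping):} I would first show that on $I$ the quantity $\|\nabla u(t)\|_2-\|\nabla W\|_2$ keeps a constant sign, a variational trapping already exploited in \cite{KeMe06Pb}: at the critical energy the Sobolev deficit functional cannot cross $\|\nabla W\|_2$ without violating $E(u,\partial_t u)=E(W,0)$. This places us on the ``sub-threshold'' or ``super-threshold'' side throughout $I$. \emph{Stage 2 (compactness modulo symmetries):} If in Case \eqref{theo.sub} the solution fails to scatter, or in Case \eqref{theo.super} one has $T_+=+\infty$, then I would run a profile decomposition à la Bahouri--Gérard together with the Kenig--Merle nonlinear profile machinery to show the existence of continuous parameters $\lambda(t)>0$, $x(t)\in\RR^N$, $\iota(t)\in\{\pm 1\}$ such that the renormalized trajectory
\begin{equation*}
\bigl(v(t),\partial_t v(t)\bigr),\qquad v(t,y)=\iota(t)\lambda(t)^{(N-2)/2}u\bigl(t,\lambda(t)y+x(t)\bigr),
\end{equation*}
is precompact in $\hdot\times L^2$. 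The sub/super-threshold side forbids bubbling, and the critical energy with the only extremizer being $W$ will force every limit point of this orbit to be exactly $(W,0)$, hence $(v(t),\partial_t v(t))\to (W,0)$ as $t\to T_+$.

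\emph{Stage 3 (modulation and rigidity).} Once $u(t)$ is close to the $W$-family, I would write $u=W+h$ after modulating out the $N+2$ null directions (scaling and translations) of the linearized operator $L=-\Delta-\tfrac{N+2}{N-2}W^{4/(N-2)}$. The spectral picture is: one negative eigenvalue $-e_0^2$ with eigenfunction $\YYY$, an $(N+2)$-dimensional kernel coming from symmetries, and a coercive positive part. Projecting the nonlinear equation onto these subspaces yields ODEs for the modulation parameters and for the coefficient $\alpha(t):=\langle h(t),\YYY\rangle$; the linearization shows $\alpha$ satisfies $\ddot\alpha=e_0^2\alpha+O(\|h\|^2)$, the orthogonal component is controlled coercively by a Lyapunov-type energy, and this yields exponential convergence $\|h(t)\|_{\hdot\times L^2}\lesssim e^{-e_0 t}$ with a sign for $\alpha$ dictated by the trapping side (negative in the sub-threshold case, positive in the super-threshold case). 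A Cauchy-style uniqueness argument for solutions admitting the asymptotic expansion \eqref{as.dev} -- constructed iteratively at all orders in $e^{-e_0t}$ as per \eqref{defWk}--\eqref{CondWa2} -- then shows that up to a time-translation the solution coincides with $W^-$ (Case \eqref{theo.sub}) or $W^+$ (Case \eqref{theo.super}). In Case \eqref{theo.super} the hypothesis $u_0\in L^2$ is used to propagate $L^2$ regularity, anchor the compactness, and to legitimately compare with $W^+$ (which lies in $L^2$ only for $N=5$), so that when $u_0\in L^2$ and $N\in\{3,4\}$ the $W^+$ alternative is excluded and $I$ must be finite; for $N=5$ the full alternative survives.

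The main obstacle I expect is \emph{Stage 2}, specifically the compactness modulo symmetries without any radial assumption: controlling the translation parameter $x(t)$ and ruling out ``traveling-wave escape'' is the genuinely new difficulty compared to the radial NLS treatment in \cite{DuMe07P}. The Lorentz invariance and finite speed of propagation of \eqref{CP} can be exploited, but the compensation of momentum and the use of the virial/Morawetz-type identities at the critical energy are delicate, and handling them uniformly across $N=3,4,5$ (where $W$ and $W^\pm$ have different integrability) is where the bulk of the work lies.
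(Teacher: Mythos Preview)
Your treatment of Case \eqref{theo.crit} is correct and matches the paper. For Case \eqref{theo.sub} your overall architecture (compactness modulo symmetries, then modulation, then uniqueness via the expansion \eqref{as.dev}--\eqref{CondWa2}) is the same as the paper's. However, your Stage~2 contains a real gap: from compactness of the renormalized orbit you assert that ``every limit point of this orbit is exactly $(W,0)$''. This does not follow from the extremizer characterization alone. A limit point $(v_0,v_1)$ inherits $E(v_0,v_1)=E(W,0)$ and $\|\nabla v_0\|_2\le\|\nabla W\|_2$, but nothing forces equality. The paper closes this gap by a localized virial computation (the function $g_R$ in \eqref{defgR}) which first yields $\tfrac{1}{T}\int_0^T \dd(t)\,dt\to 0$, hence a \emph{subsequence} with $\dd(t_n)\to0$; upgrading to exponential convergence then requires coupling a second virial-type estimate with the modulation control of $x(t)$ and $1/\lambda(t)$ (Lemmas~\ref{lem.virial} and~\ref{lem.bound.x}). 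You correctly flag the $x(t)$ control as delicate, but you should not expect convergence to $(W,0)$ to drop out of compactness and variational structure alone.

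For Case \eqref{theo.super} your proposal takes the wrong route. You suggest running the same profile-decomposition/compactness machinery as in the subcritical case, with $u_0\in L^2$ serving to ``anchor the compactness''. But the Kenig--Merle compactness argument rests on the fact that profiles with strictly smaller energy \emph{scatter}; on the supercritical side $\|\nabla u_0\|_2>\|\nabla W\|_2$ the corresponding subthreshold statement is finite-time blow-up, and there is no minimal non-scattering element to extract. The paper does not attempt this. Instead it uses $u_0\in L^2$ in a completely different, elementary way: setting $y(t)=\|u(t)\|_2^2$, one computes $y''(t)\ge \dd(t)>0$ and, via Cauchy--Schwarz, $y'(t)^2\le \tfrac{N-2}{N-1}y(t)y''(t)$. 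If $y'(t_0)\ge 0$ at any time this differential inequality forces finite-time blow-up, so $y'<0$ on $[0,\infty)$; integrating then gives $\int_t^\infty \dd(s)\,ds\le |y'(t)|\le C\dd(t)$, hence exponential decay of $\int_t^\infty\dd$. Convergence of $u(t)$ in $L^2$ identifies the limit as $W_{\lambda_0,x_0}\in L^2$, forcing $N=5$, and then the modulation/uniqueness argument of your Stage~3 finishes. So the role of the $L^2$ hypothesis is to make $y(t)$ well-defined and to drive this convexity argument, not to feed a compactness lemma.
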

The constant $C_N$ is defined in \eqref{SobolevIn}. In the theorem, by \emph{$u$ equals $v$ up to the sym\-me\-try of the equation}, we mean that there exists $t_0\in\RR$, $x_0\in \RR^N$, $\lambda_0>0$, $\delta_0,\delta_1\in \{-1,+1\}$ such that
$$ u(t,x)=\frac{\delta_0}{\lambda_0^{(N-2)/2}}v\Big(\frac{t_0+\delta_1 t}{\lambda_0},\frac{x+x_0}{\lambda_0}\Big).$$
\begin{remark}
\label{RemPersist}
Case \eqref{theo.crit} is a direct consequence of the variational characterization of $W$ given by \eqref{CarW}. Furthermore, using assumption \eqref{threshold}, it shows (by continuity of $u$ in $\hdot$ and the conservation of energy) that the assumptions $ \ds \int |\nabla u(t_0)|^2<\int |\nabla W|^2, \; \ds \int |\nabla u(t_0)|^2>\int |\nabla W|^2$
do not depend on the choice of the initial time $t_0$. Of course, this dichotomy does not persist when $E(u_0,u_1)>E(W,0)$.
\end{remark}
\begin{remark}
Theorem \ref{th.classif} is also the analoguous, for the wave equation, of Theorem 2 of \cite{DuMe07P} for NLS, but without any radial assumption. The nonradial situation carries various problems partially solved in \cite{KeMe06Pb}, the major difficulty being a sharp control in time of the space localization of the energy. We conjecture that the NLS result also holds in the nonradial situation.
Note that case \eqref{theo.sub} implies ($W$ being radial) that any solution of \eqref{CP} satisfying \eqref{threshold} and whose initial condition is not radial up to a space-translation must scatter if $\int |\nabla u_0|^2<\int |\nabla W|^2$.
\end{remark}

\begin{remark}
In dimension $N=3$ or $N=4$, $W^+$ is not in $L^2$, and case \eqref{theo.super} means that any critical-energy solution such that $\int |\nabla u_0|^2>\int|\nabla W|^2$ and $u_0\in L^2$ blows-up for $t<0$ and $t>0$. It seems a delicate problem to get rid of the assumption $u_0\in L^2$.
\end{remark}

\begin{remark}
As a corollary, in dimension $N=5$, a dynamical characterization of $W$ is obtained. It is, up to the symmetry of the equation, the only $L^2$-solution such that $E(u_0,u_1)\leq E(W,0)$ that does not explode and does not scatter neither for positive nor negative time.
\end{remark}

The paper is organized as follows. In Section \ref{sec.compact} we recall previous results on the Cauchy Problem for \eqref{CP} and give preliminary properties of solutions of \eqref{CP} at the energy threshold such that $\int |\nabla u_0|^2<\int |\nabla W|^2$ and which do not scatter for positive times. These properties mainly follow from \cite{KeMe06Pb}. In Section \ref{sec.sub}, we show that these solutions converge exponentially to $W$ as $t\rightarrow +\infty$, which is the first step of the proof of Theorem \ref{th.classif} in case \eqref{theo.sub}. In Section \ref{sec.super}, we show the same result for energy-threshold solutions such that $\int |\nabla u_0|^2>\int |\nabla W|^2$, $u_0\in L^2$ and that are globally defined for positive time. In Section \ref{sec.lin}, we study the linearized equation around the solution $W$. Both theorems are proven in Section \ref{sec.proofs}. The main tool of the proofs is a fixed point giving the existence of the special solutions and, by the uniqueness property, the rigidity result in Theorem \ref{th.classif}.

\section{Preliminaries of subcritical threshold solutions}
\label{sec.compact}
\subsection{Quick review on the Cauchy problem}
We recall some results on the Cauchy Problem for \eqref{CP}. We refer to \cite[Section 2]{KeMe06Pb}, for a complete overview.
If $I$ is an interval, write
\begin{align}
\label{defS}
S(I)&:=L^{\frac{2(N+1)}{N-2}}(I\times \RR^N),\quad N(I):=L^{\frac{2(N+1)}{N+3}}(I\times\RR^N)\\
\label{defl}
\|u\|_{\ell(I)}&:=\|u\|_{S(I)}+\|D_x^{1/2} u\|_{L^{\frac{2(N+1)}{N-1}}(I\times \RR^N)}+\|\partial_t D_x^{-1/2}u\|_{L^{\frac{2(N+1)}{N-1}}(I\times \RR^N)}.
\end{align}
We first consider the free wave equation:
\begin{gather}
\label{LS1}
\partial_t^2 u-\Delta u=f,\quad t\in (0,T) \\
\label{LS2}
u_{\restriction t=0}=u_0,\; \partial_t u_{\restriction t=0}=u_1,
\end{gather}
where $D_x^{1/2} f \in N(0,T)$, $u_0\in \hdot$, $u_1\in L^2$. The solution of \eqref{LS1}, \eqref{LS2} is given by
$$ u(t,x)=\cos\big(t\sqrt{-\Delta}\big)u_0+\frac{\sin\big(t\sqrt{-\Delta}\big)}{\sqrt{-\Delta}} u_1+\int_0^t \frac{\sin\big((t-s)\sqrt{-\Delta}\big)}{\sqrt{-\Delta}}f(s) ds.$$
Then we have the following Strichartz estimates (see \cite{GiVe95,LiSo95}).
\begin{prop}
\label{prop.Strichartz}
Let $u$ and $f$ be as above. Then $u\in C^{0}(0,T;\hdot)$ and $\partial_t u\in C^0(0,T;L^2)$. Furthermore, for a constant $C>0$ independent of $T\in [0,\infty]$
\begin{equation}
\label{Strichartz}
\|u\|_{\ell(0,T)}+\sup_{t\in [0,T]} \|\nabla u(t)\|_2+\|\partial_t u(t)\|_2\leq C\left( \|\nabla u_0\|_{2}+\|u_1\|_2+\big\|D_x^{1/2}f\big\|_{N(0,T)}\right).
\end{equation}
Furthermore, if $D_x^{1/2} f\in N(T,+\infty)$, 
\begin{multline}
\label{dualStrichartz}
\forall t\geq 0,\quad \left\|\nabla\left(\int_T^{+\infty} \frac{\sin\big((t-s)\sqrt{-\Delta}\big)}{\sqrt{-\Delta}}f(s)ds\right)\right\|_2\\
+\left\|\partial_t\left(\int_T^{+\infty} \frac{\sin\big((t-s)\sqrt{-\Delta}\big)}{\sqrt{-\Delta}}f(s)ds\right)\right\|_2\leq C\big\|D_x^{1/2}f\big\|_{N(T,+\infty)}.
\end{multline}
\end{prop}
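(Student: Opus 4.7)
The plan is to derive Proposition \ref{prop.Strichartz} from the standard wave-equation Strichartz machinery going back to Ginibre--Velo and Lindblad--Sogge. By the Duhamel formula the problem splits into estimating the free evolution $u_L(t)=\cos(t\sqrt{-\Delta})u_0+\frac{\sin(t\sqrt{-\Delta})}{\sqrt{-\Delta}}u_1$ and the retarded term $u_I(t)=\int_0^t\frac{\sin((t-s)\sqrt{-\Delta})}{\sqrt{-\Delta}}f(s)\,ds$, so I would treat each in turn.

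For the free part, the energy identity $\|\nabla u_L(t)\|_2^2+\|\partial_t u_L(t)\|_2^2=\|\nabla u_0\|_2^2+\|u_1\|_2^2$ is immediate from the Fourier representation and handles the $L^\infty_t(\hdot\times L^2)$ piece. For the $\ell(0,T)$ contribution I would decompose into the two half-wave groups $e^{\pm it\sqrt{-\Delta}}$ acting on $\dot H^s$ data and apply the $TT^*$ argument: combine the dispersive bound $\|e^{\pm it\sqrt{-\Delta}}\|_{\dot B^{(N-1)/2}_{1,1}\to L^\infty}\lesssim |t|^{-(N-1)/2}$ with Hardy--Littlewood--Sobolev to obtain
$$\|e^{\pm it\sqrt{-\Delta}}g\|_{L^q_tL^r_x}\lesssim \|g\|_{\dot H^s},\qquad \tfrac{1}{q}+\tfrac{N-1}{2r}\leq\tfrac{N-1}{4},\quad \tfrac{1}{q}+\tfrac{N}{r}=\tfrac{N}{2}-s.$$
The three norms in $\ell(0,T)$ correspond to just two admissible pairs: $(q,r)=\bigl(\tfrac{2(N+1)}{N-2},\tfrac{2(N+1)}{N-2}\bigr)$ at regularity $s=1$ (for the $S$-piece), and $(q,r)=\bigl(\tfrac{2(N+1)}{N-1},\tfrac{2(N+1)}{N-1}\bigr)$ at regularity $s=1/2$ (for the two $D_x^{\pm 1/2}$-pieces); a direct computation confirms both pairs lie on the sharp wave admissibility line and satisfy the scaling identity.

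For the retarded term, I would invoke the standard duality argument: the dual exponent of $\tfrac{2(N+1)}{N-1}$ is exactly $\tfrac{2(N+1)}{N+3}$, so the $D_x^{1/2}$ applied to $f$ in $N(0,T)$ is precisely the dual Strichartz norm at regularity $-1/2$. Hence the homogeneous bound plus a Christ--Kiselev argument (to pass from the full-line convolution $\int_{\RR}$ to the retarded integral $\int_0^t$, valid because all admissible exponents $q$ exceed $2$) yields the full inequality \eqref{Strichartz}. The dual estimate \eqref{dualStrichartz} is obtained in the same spirit: pairing against a test datum at time $t\geq 0$ and using the self-adjointness of $\frac{\sin(s\sqrt{-\Delta})}{\sqrt{-\Delta}}$ reduces the bound to applying the free Strichartz estimate to a backward-in-time linear solution and evaluating it on $[T,+\infty)$; no Christ--Kiselev step is needed here since the integration range and evaluation time are already separated.

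The main technical point, and the step I expect to require the most bookkeeping, is matching the fractional derivatives in the definition of $\ell(I)$ with the scaling of the intermediate admissible pair at $s=1/2$; once this is pinned down, every ingredient is a quotation from the general linear theory and the two estimates follow.
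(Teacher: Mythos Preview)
Your sketch is correct and in fact provides considerably more detail than the paper does: the paper states Proposition \ref{prop.Strichartz} without proof, citing Ginibre--Velo and Lindblad--Sogge for the result. Your outline via the $TT^*$ argument, the verification that the two diagonal pairs $(q,r)=\bigl(\tfrac{2(N+1)}{N-2},\tfrac{2(N+1)}{N-2}\bigr)$ and $\bigl(\tfrac{2(N+1)}{N-1},\tfrac{2(N+1)}{N-1}\bigr)$ are wave-admissible at regularities $s=1$ and $s=1/2$ respectively, the identification of $N(I)$ as the dual Strichartz norm, and the Christ--Kiselev reduction are exactly the standard route to these estimates, so there is nothing to correct.
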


A solution of \eqref{CP} on an interval $I\ni 0$ is a function $u\in C^{0}(I,\hdot)$ such that $\partial_t u\in C^0(I,\hdot)$ and $u\in S(J)$ for all interval $J\Subset I$ and
$$ u(t,x)=\cos\big(t\sqrt{-\Delta}\big)u_0+\frac{\sin\big(t\sqrt{-\Delta}\big)}{\sqrt{-\Delta}} u_1+\int_0^t \frac{\sin\big((t-s)\sqrt{-\Delta}\big)}{\sqrt{-\Delta}}|u(s)|^{\frac{4}{N-2}} u(s) ds.$$

\begin{prop}(see \cite{Pecher84,GiSoVe92,ShSt94})
\label{exi.uni}
\begin{enumerate}
\item\emph{Existence.} If $u_0\in \hdot$, $u_1\in L^2$, there exists an interval $I\ni 0$ and a solution $u$ of \eqref{CP} on $I$ with initial conditions $(u_0,u_1)$.
\item\emph{Uniqueness.} If $u$ and $\tilde{u}$ are solutions of \eqref{CP} on an interval $I \ni 0$ such that $u(0)=\tilde{u}(0)$ and $\partial_t u(0)=\partial_t \tilde{u}(0)$, then $u=\tilde{u}$ on $I$.
\end{enumerate}
\end{prop}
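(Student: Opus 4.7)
The plan is to reduce both existence and uniqueness to a Picard iteration in a Strichartz-based function space. Define the Duhamel map
$$\Phi(v)(t)=\cos\!\bigl(t\sqrt{-\Delta}\bigr)u_0+\frac{\sin\!\bigl(t\sqrt{-\Delta}\bigr)}{\sqrt{-\Delta}}u_1+\int_0^t\frac{\sin\!\bigl((t-s)\sqrt{-\Delta}\bigr)}{\sqrt{-\Delta}}|v(s)|^{\frac{4}{N-2}}v(s)\,ds,$$
and look for a fixed point in the closed ball of radius $2R$ of $\{v:\|v\|_{\ell(I)}<\infty\}$, where $R$ is the Strichartz norm of the free evolution of $(u_0,u_1)$ on $I=[0,T]$, which can be made small by choosing $T$ small (since $R\to 0$ as $|I|\to 0$ by dominated convergence in the $S$-component, combined with the continuous-in-time bound on $D_x^{1/2}u$). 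Applying Proposition \ref{prop.Strichartz} to $\Phi(v)$ yields
$$\|\Phi(v)\|_{\ell(I)}\leq CR+C\bigl\|D_x^{1/2}\bigl(|v|^{\frac{4}{N-2}}v\bigr)\bigr\|_{N(I)}.$$

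The key nonlinear ingredient is the fractional chain/Leibniz rule, giving
$$\bigl\|D_x^{1/2}\bigl(|v|^{\frac{4}{N-2}}v\bigr)\bigr\|_{N(I)}\leq C\|v\|_{S(I)}^{\frac{4}{N-2}}\,\bigl\|D_x^{1/2}v\bigr\|_{L^{\frac{2(N+1)}{N-1}}(I\times\RR^N)},$$
where the Hölder exponents balance because $\tfrac{4/(N-2)}{2(N+1)/(N-2)}+\tfrac{N-1}{2(N+1)}=\tfrac{N+3}{2(N+1)}$. In dimensions $N=3,4$ the nonlinearity is the odd polynomial $u^5$ or $u^3$ and the estimate is elementary; for $N=5$ the exponent $4/3$ is fractional and one must invoke the Christ–Weinstein (Kato) fractional chain rule, which is the main technical point. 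A twin estimate applied to $\Phi(v)-\Phi(\tilde v)$ uses the pointwise bound
$$\bigl||a|^{\frac{4}{N-2}}a-|b|^{\frac{4}{N-2}}b\bigr|\leq C\bigl(|a|^{\frac{4}{N-2}}+|b|^{\frac{4}{N-2}}\bigr)|a-b|$$
and shows that $\Phi$ is a contraction on $B_{2R}$ once $T$ is small enough. The fixed point is the desired solution; continuity in $\hdot\times L^2$ follows directly from \eqref{Strichartz}, and the local existence interval can be extended to a maximal $I=(-T_-(u),T_+(u))$ in the usual way.

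For uniqueness, let $u,\tilde u$ be two solutions on $I\ni 0$ with the same initial data. Their difference $w=u-\tilde u$ satisfies the inhomogeneous wave equation with zero data and forcing $|u|^{\frac{4}{N-2}}u-|\tilde u|^{\frac{4}{N-2}}\tilde u$. Using the same Strichartz–Hölder scheme on a short interval $[0,\tau]\subset I$ yields
$$\|w\|_{\ell(0,\tau)}\leq C\bigl(\|u\|_{S(0,\tau)}^{\frac{4}{N-2}}+\|\tilde u\|_{S(0,\tau)}^{\frac{4}{N-2}}\bigr)\|w\|_{\ell(0,\tau)}.$$
Since $u,\tilde u\in S(J)$ for every $J\Subset I$, the $S(0,\tau)$ norms tend to $0$ as $\tau\to 0$, so for $\tau$ small the prefactor is $\leq 1/2$, forcing $w\equiv 0$ on $[0,\tau]$. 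A standard continuation argument (iterating forward and backward from the maximal time where $u=\tilde u$) extends the equality to all of $I$.

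The main obstacle is the low regularity of the nonlinearity in dimension $N=5$: the exponent $4/(N-2)=4/3$ is non-integer, so $|u|^{4/3}u$ is only $C^{1+1/3}$, and verifying the fractional Leibniz estimate above requires the delicate Christ–Weinstein fractional chain rule rather than a pointwise Leibniz computation. Once this bound is established, all the remaining steps are a routine application of the Strichartz estimates \eqref{Strichartz}–\eqref{dualStrichartz} together with Hölder's inequality, and the existence/uniqueness statements follow.
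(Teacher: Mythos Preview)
The paper does not supply its own proof of this proposition; it simply cites \cite{Pecher84,GiSoVe92,ShSt94} and moves on. Your proposal reproduces the standard fixed-point argument in Strichartz spaces that one finds in those references and in \cite[Section~2]{KeMe06Pb}, and it is correct: the H\"older exponents balance as you check, the smallness of the free Strichartz norm on short intervals follows from dominated convergence, and you correctly flag the fractional chain rule as the only nontrivial ingredient when $N=5$. There is nothing to compare against here, and your sketch is adequate.
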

According to Proposition \ref{exi.uni}, if $(u_0,u_1)\in \hdot\times L^2$, there exists a maximal open interval of definition for the solution $u$ of \eqref{CP}, that we will denote by $\left(-T_-(u),T_+(u)\right)$. The following holds 

\begin{prop}
\label{prop.criterion}
\begin{enumerate}
\item[]
\item\emph{Finite blow-up criterion.} If $T_+:=T_+(u)<\infty$ then 
$$ \|u\|_{S(0,T_+)}=\infty.$$
A similar result holds for negative times.
\item\label{continuity}\emph{Continuity.} Let $u$ be a solution of \eqref{CP} on an interval $I$ with initial condition $(u_0,u_1)\in \hdot\times L^2$. If $(u^k)$ is a sequence of solution of \eqref{CP} with initial conditions
$$(u_0^k,u_1^k)\underset{k \rightarrow +\infty}{\longrightarrow}(u_0,u_1) \text{ in }\hdot\times L^2$$ 
and $J\Subset (-T_-,T_+)$, then for large $k$, $J\subset \left(-T_-\big(u^k\big),T_+\big(u^k\big)\right)$, and
$$ (u^k,\partial_t u^k)\underset{k \rightarrow +\infty}{\longrightarrow}(u,\partial_t u) \text{ in }C^0(J,\hdot)\times C^0(J,L^2),\quad u^k  \underset{k \rightarrow +\infty}{\longrightarrow} u \text{ in }S(J).$$
\item\emph{Scattering.}
If $u$ is a solution of \eqref{CP} such that $T_+(u)=\infty$ and $\|u\|_{S(0,\infty)}<\infty$, then $u$ scatters.
\item\label{fsop}\emph{Finite speed of propagation.} (see \cite[Lemma 2.17]{KeMe06Pb})
There exist $\eps_0,\,C_0>0$, depending only on $\|\nabla u_0\|_{2}$ and $\|u_1\|_{2}$, such that if there exist $M,\,\eps>0$ satisfying $\eps<\eps_0$ and $\int_{|x|\geq M} |\nabla_x u_0|+\frac{1}{|x|^2}|u_0|^2+|u_0|^{2^*}+|u_1|^2\leq \eps$, then, 
$$ \forall t\in [0,T_+(u)), \quad \int_{|x|\geq \frac{3}{2}M+t} |\nabla u(t,x)|^2+\frac{1}{|x|^2}|u(t,x)|^2+|u|^{2^*}+|\partial_t u(t,x)|^2dx\leq C_0\eps.$$
\end{enumerate}
\end{prop}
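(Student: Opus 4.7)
The plan is to treat the four parts in sequence, with Proposition \ref{prop.Strichartz} as the main tool for (i)--(iii) and an energy identity on an exterior cone for (iv). Everything rests on a quantitative local well-posedness lemma I would first record: for any $(v_0,v_1)\in\hdot\times L^2$ and any interval $J\ni 0$ on which the free evolution $v_L$ of $(v_0,v_1)$ satisfies $\|v_L\|_{\ell(J)}<\eta_0$ for a universal small $\eta_0$, the Duhamel formulation of \eqref{CP} defines a contraction in a ball of the Banach space associated with $\|\cdot\|_{\ell(J)}$, by \eqref{Strichartz} together with the pointwise bound $\big||a|^{4/(N-2)}a-|b|^{4/(N-2)}b\big|\lesssim(|a|^{4/(N-2)}+|b|^{4/(N-2)})|a-b|$ and H\"older in spacetime; this produces the unique solution with $\|v\|_{\ell(J)}\lesssim\|v_L\|_{\ell(J)}$.

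For the \textbf{finite blow-up criterion}, suppose for contradiction that $T_+<\infty$ and $\|u\|_{S(0,T_+)}<\infty$. I would partition $(0,T_+)$ into finitely many subintervals on which $\|u\|_S$ is below the small threshold used in the fixed point, and apply \eqref{Strichartz} inductively to promote this to $\|u\|_{\ell(0,T_+)}<\infty$. The dual Strichartz bound \eqref{dualStrichartz} applied to the Duhamel tail then gives that $(u(t),\partial_t u(t))$ is Cauchy in $\hdot\times L^2$ as $t\uparrow T_+$; restarting the Cauchy problem at $T_+$ from the limiting data and gluing extends $u$ past $T_+$, contradicting maximality.

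For \textbf{continuity}, fix $J\Subset(-T_-,T_+)$ and partition $J=\bigcup_{j=1}^m J_j$ with $\|u\|_{S(J_j)}<\delta$. On each $J_j$, writing the equation for $w^k:=u^k-u$ with the difference of nonlinearities as source and applying Strichartz, the same pointwise inequality yields a closed estimate controlling $\|w^k\|_{\ell(J_j)}$ by the data at the left endpoint of $J_j$, provided $\delta$ is small and provided $u^k$ already exists on $J_j$ (which is propagated from $J_{j-1}$ by the blow-up criterion). Iterating through $J_1,\ldots,J_m$ transfers $\|(u_0^k-u_0,u_1^k-u_1)\|_{\hdot\times L^2}\to 0$ into $\|w^k\|_{\ell(J)}\to 0$. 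This is the main technical obstacle: the Strichartz constants compound across the $m$ subintervals, so $\delta$ and hence the smallness threshold on the data must be chosen in terms of $m$, i.e.\ in terms of $u$ itself. For \textbf{scattering}, the same partition argument gives $\|u\|_{\ell(0,\infty)}<\infty$, and \eqref{dualStrichartz} forces the Duhamel tail $\int_T^{\infty}\frac{\sin((t-s)\sqrt{-\Delta})}{\sqrt{-\Delta}}|u|^{4/(N-2)}u(s)\,ds$ together with its time derivative to tend to $0$ in $\hdot\times L^2$ as $T\to\infty$; reading off the Duhamel identity, this produces linear scattering data $(v_0^+,v_1^+)\in\hdot\times L^2$ whose free evolution is asymptotic to $(u(t),\partial_t u(t))$.

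For \textbf{finite speed of propagation}, set $e(u):=\frac{1}{2}|\partial_t u|^2+\frac{1}{2}|\nabla u|^2-\frac{1}{2^*}|u|^{2^*}$ and consider the truncated cone $\Gamma_t:=\{(s,y):0\leq s\leq t,\,|y|\geq\frac{3}{2}M+s\}$. Multiplying \eqref{CP} by $\partial_t u$ and integrating over $\Gamma_t$ yields a null-flux identity in which the flux across the mantle is pointwise nonnegative, so $\int_{|x|\geq\frac{3}{2}M+t}e(u(t))\,dx\leq\int_{|x|\geq M}e(u_0)\,dx$. A Hardy-type bound on the exterior region controls $|x|^{-2}|u|^2$, while Sobolev on the same region absorbs the negative $|u|^{2^*}$ term into the gradient contribution once $\eps<\eps_0$ is small enough; the spatial margin $\tfrac{3}{2}M+t$ versus $M+t$ is precisely what provides a fixed positive buffer to close this bootstrap uniformly in $t$, yielding the claimed $C_0\eps$ control.
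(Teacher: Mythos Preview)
The paper does not actually prove Proposition~\ref{prop.criterion}; it records the four items as known background (with a pointer to \cite[Lemma 2.17]{KeMe06Pb} for part~\eqref{fsop}) and moves on. Your sketches for the blow-up criterion, continuity, and scattering are the standard arguments and are correct.

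For the finite speed of propagation part, however, there is a genuine error. You assert that ``the flux across the mantle is pointwise nonnegative''. For the \emph{focusing} equation this is false: the flux density on the null surface $|x|=R+s$ equals
\[
\tfrac{1}{2}\big|\partial_t u+\partial_r u\big|^2+\tfrac{1}{2}|\slashed{\nabla} u|^2-\tfrac{1}{2^*}|u|^{2^*},
\]
and the last term has the wrong sign. Your subsequent Sobolev/Hardy absorption argument is phrased for the spatial slices, not for the mantle, so it does not repair this; controlling the mantle term would require a trace-type inequality on the null cone together with a continuity argument, which is considerably more than what you wrote.

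The route taken in \cite{KeMe06Pb} (and implicitly adopted by the paper) avoids this obstacle altogether: one truncates the data with a cutoff supported in $\{|x|\geq M\}$ so that the new data are small in $\hdot\times L^2$, invokes small-data global well-posedness to get a global solution $\tilde u$ with uniform bounds, and then uses finite speed of propagation at the level of the Duhamel fixed point (where the difference $u-\tilde u$ solves an equation with zero data outside the cone and the contraction forces it to vanish there). The exterior estimates on $u$ are then inherited from those on $\tilde u$. The $\tfrac{3}{2}M$ versus $M$ margin is used to accommodate the cutoff, not to close an energy-flux bootstrap.
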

\subsection{Properties of subcritical threshold solutions}
We are now interested in solutions of \eqref{CP} with maximal interval of definition $(T_-,T_+)$ and such that
\begin{gather}
\label{hyp.sub}
E(u_0,u_1)=E(W,0),\quad \|\nabla u_0\|_{2}<\|\nabla W\|_{2}\\
\label{hyp.noscatter}
\|u\|_{S(0,T_+)}=\infty.
\end{gather}
We start with the following claim (see \cite[Theorem 3.5]{KeMe06Pb}).
\begin{claim}[Energy Trapping]
\label{usefulclaim}
Let $u$ be a solution of \eqref{CP} satisfying \eqref{hyp.sub}. Then for all $t$ in the interval of existence $(-T_-,T_+)$ of $u$. 
\begin{equation}
\label{ineg.var}
\|\nabla u(t)\|_2^2+\frac{N}{2}\|\partial_t u\|_2^2\leq \|\nabla W\|_2^2.
\end{equation}
\end{claim}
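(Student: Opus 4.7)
The plan is to combine Sobolev's inequality \eqref{SobolevIn} with energy conservation to get a pointwise constraint on the pair $\bigl(\|\nabla u(t)\|_2,\|\partial_t u(t)\|_2\bigr)$, and then to close the trapping estimate by a continuity/rigidity argument together with one elementary algebraic inequality.

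First I would set $\lambda(t):=\|\nabla u(t)\|_2^2$ and introduce
$$g(\lambda):=\frac{1}{2}\lambda-\frac{C_N^{2^*}}{2^*}\lambda^{2^*/2},\qquad \lambda\geq 0.$$
By \eqref{SobolevIn}, for every $v\in\hdot$ one has $\frac{1}{2}\|\nabla v\|_2^2-\frac{1}{2^*}\|v\|_{2^*}^{2^*}\geq g(\|\nabla v\|_2^2)$, so conservation of energy gives
$$\frac{1}{2}\|\partial_t u(t)\|_2^2+g(\lambda(t))\leq E(u_0,u_1)=E(W,0).$$
A short computation from $-\Delta W=W^{2^*-1}$ (multiply by $W$ and integrate) together with the equality case of Sobolev yields $\|\nabla W\|_2^2=C_N^{-N}$, and shows that $g$ is strictly increasing on $[0,\|\nabla W\|_2^2]$ with maximum $g(\|\nabla W\|_2^2)=\frac{1}{N}\|\nabla W\|_2^2=E(W,0)$.

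Next I would show that $\lambda(t)\leq\|\nabla W\|_2^2$ for every $t\in(-T_-,T_+)$. The map $t\mapsto\lambda(t)$ is continuous and $\lambda(0)<\|\nabla W\|_2^2$ by hypothesis. If the bound failed there would be a first time $t_0$ with $\lambda(t_0)=\|\nabla W\|_2^2$; the previous display would then force $\partial_t u(t_0)=0$ and equality in Sobolev for $u(t_0)$. The Aubin--Talenti characterization \eqref{CarW} would identify $u(t_0)$ with $W$ up to the symmetries of the equation, and the uniqueness in Proposition \ref{exi.uni} would propagate this, forcing $u$ to be a stationary profile on the whole interval of existence, contradicting $\lambda(0)<\|\nabla W\|_2^2$.

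To finish I would verify the elementary inequality $Ng(\lambda)\geq\lambda$ on $[0,\|\nabla W\|_2^2]$; using $\frac{N}{2^*}=\frac{N-2}{2}$ and $\|\nabla W\|_2^{2^*-2}=1/C_N^{2^*}$, it reduces to the monotonicity statement $\lambda^{2^*/2-1}\leq\|\nabla W\|_2^{2^*-2}$, which is precisely the bound of the previous step. Combining with the energy display gives
$$\tfrac{N}{2}\|\partial_t u(t)\|_2^2\leq N\bigl(E(W,0)-g(\lambda(t))\bigr)=\|\nabla W\|_2^2-Ng(\lambda(t))\leq\|\nabla W\|_2^2-\lambda(t),$$
which is exactly \eqref{ineg.var}. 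I do not expect a real obstacle: the only delicate point is ruling out equality $\lambda(t_0)=\|\nabla W\|_2^2$, handled once and for all by the rigidity \eqref{CarW} combined with Cauchy uniqueness. The non-scattering hypothesis \eqref{hyp.noscatter} plays no role; only $E(u_0,u_1)=E(W,0)$ and the strict inequality $\|\nabla u_0\|_2<\|\nabla W\|_2$ are used.
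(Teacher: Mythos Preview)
Your proof is correct and follows essentially the same route as the paper. The paper quotes the variational inequality \eqref{variationnal} (namely $\|\nabla v\|_2^2\leq N\,E(v,0)$ whenever $\|\nabla v\|_2^2\leq\|\nabla W\|_2^2$) from \cite{DuMe07P} and invokes Remark \ref{RemPersist} for the persistence of $\|\nabla u(t)\|_2<\|\nabla W\|_2$; you simply unpack both ingredients explicitly---your algebraic inequality $Ng(\lambda)\geq\lambda$ on $[0,\|\nabla W\|_2^2]$ is exactly \eqref{variationnal}, and your continuity/rigidity step is exactly the content of Remark \ref{RemPersist}.
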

\begin{proof}
Recall the following property which follows from a convexity argument
\begin{equation}
 \label{variationnal}
\forall v\in \hdot,\quad \|\nabla v\|_2^2\leq \|\nabla W\|_2^2\text{ and }E(v,0)\leq E(W,0)\Longrightarrow \frac{\|\nabla v\|_2^2}{\|\nabla W\|_2^2}\leq \frac{E(v,0)}{E(W,0)}.
\end{equation}
(See \cite[Claim 2.6]{DuMe07P}). Let $u$ be as in the claim. Note that by remark \ref{RemPersist} $\|\nabla u(t)\|_2< \|\nabla W\|_2$ for all $t$ in the domain of existence of $u$.
Now, according to \eqref{variationnal} and the fact that $E(u(t),\partial_t u(t))=E(W,0)$, 
\begin{equation*}
\frac{\|\nabla u(t)\|_2^2}{\|\nabla W\|_2^2}\leq \frac{E(u,\partial_t u)-\frac{1}{2}\|\partial_t u(t)\|_2^2}{E(W,0)}= \frac{E(W,0)-\frac{1}{2}\|\partial_t u(t)\|^2_2}{E(W,0)},
\end{equation*}
which yields \eqref{ineg.var}, recalling that  $N E(W,0)=\|\nabla W\|_2^2$.
\end{proof}

%\subsection{Some crucial results of \cite{KeMe06Pb} on subcritical solutions}
We recall now some key results from \cite{KeMe06Pb}. In their work, these results are shown for a critical element $u$, where assumptions \eqref{hyp.sub} are replaced by $E(u_0,u_1)<E(W,0)$, and $\|\nabla u_0\|_{2}<\|\nabla W\|_{2}$. Rather than recalling the proofs which are long and far from being trivial, we will briefly explain how they adapt in our case.
If $(f,g)$ is in $\hdot\times L^2$, we write
\begin{equation*}
(f,g)_{\lambda_0,x_0}(y)= \left(\frac{1}{\lambda_0^{\frac{N}{2}-1}}f\Big(\frac{y}{\lambda_0}+x_0\Big),\frac{1}{\lambda_0^\frac{N}{2}}g\Big(\frac{y}{\lambda_0}+x_0\Big)\right),\;
f_{\lambda_0,x_0}(y)=\frac{1}{\lambda_0^{\frac{N}{2}-1}}f\Big(\frac{y}{\lambda_0}+x_0\Big).
\end{equation*}

\begin{lemma}
\label{lem.compact}
Let $u$ be a solution of \eqref{CP} satistisfying \eqref{hyp.sub} and \eqref{hyp.noscatter}.
Then there exist continuous functions of $t$, $(\lambda(t),x(t))$ such that
$$K:=\left\{ \big(u(t),\partial_t u(t)\big)_{\lambda(t),x(t)},\; t\in [0,T_+)\right\}$$
has compact closure in $\hdot\times L^2$.
\end{lemma}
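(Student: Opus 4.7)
The plan is to follow the concentration--compactness/rigidity strategy of Kenig--Merle \cite{KeMe06Pb}, adapting it from the strictly subcritical case \eqref{hypsubcrit} to the threshold level via the refined trapping estimate \eqref{ineg.var} of Claim \ref{usefulclaim}. First I would prove the following sequential compactness statement: for every sequence $t_n\to T_+$ in $[0,T_+)$ there exist $\lambda_n>0$, $x_n\in\RR^N$ and a subsequence such that $\big(u(t_n),\partial_t u(t_n)\big)_{\lambda_n,x_n}$ converges in $\hdot\times L^2$. Once this is established, the existence of \emph{continuous} functions $(\lambda(t),x(t))$ with the orbit $K$ having compact closure follows from a standard selection-and-smoothing argument that uses the $\hdot\times L^2$-continuity of $t\mapsto(u(t),\partial_t u(t))$ from Proposition \ref{exi.uni}.

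The sequential compactness is proved by contradiction. By Claim \ref{usefulclaim} the sequence $\big(u(t_n),\partial_t u(t_n)\big)$ is bounded in $\hdot\times L^2$, so a Bahouri--G\'erard linear profile decomposition yields, up to extraction, profiles $(V^j_0,V^j_1)\in\hdot\times L^2$, pairwise orthogonal parameters $(\lambda^j_n,x^j_n,t^j_n)$, and remainders $(w^{J,n}_0,w^{J,n}_1)$ whose linear evolution has $S(\RR)$-norm going to zero as $n\to\infty$ and $J\to\infty$. The standard orthogonality properties of the decomposition provide Pythagorean identities for the $\hdot$-, $L^2$- and energy-norms. Combining the gradient identity with \eqref{ineg.var} forces each profile, evaluated at its own time shift, and the remainder, to be strictly subcritical with respect to $\|\nabla W\|_2$; the variational inequality \eqref{variationnal} then guarantees that each summand of the energy identity is nonnegative.

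I would then introduce the nonlinear profiles $U^j$, i.e.\ the solutions of \eqref{CP} with data matched to $V^j$ at the shift $-t^j_n/\lambda^j_n$. Since all summands of the energy decomposition are $\geq 0$ and sum to $E(W,0)$, two cases arise. If exactly one profile $U^1$ carries the full energy and all other profiles and the remainder are trivial in the limit, then $\big(u(t_n),\partial_t u(t_n)\big)$ is precompact modulo the parameters $(\lambda^1_n,x^1_n)$, contradicting the working hypothesis. Otherwise every nonlinear profile satisfies $E(U^j)<E(W,0)$ and $\|\nabla U^j\|_2<\|\nabla W\|_2$, so the subcritical scattering theorem recalled in the Introduction supplies a uniform $S(\RR)$-bound for each $U^j$.

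The final step is the Kenig--Merle long-time perturbation argument: assembling the $U^j$ via the nonlinear profile decomposition and exploiting the pairwise orthogonality of the parameters, together with the Strichartz estimates of Proposition \ref{prop.Strichartz}, yields $\|u\|_{S(0,T_+)}<\infty$, contradicting \eqref{hyp.noscatter} and closing the argument. I expect the main obstacle to lie precisely in this perturbation step, which demands a careful nonlinear estimate on the superposition of widely separated profiles plus control of the small-Strichartz remainder; it is essentially the machinery already developed in \cite{KeMe06Pb}, executed here with the threshold trapping inequality \eqref{ineg.var} replacing the strictly subcritical version available under \eqref{hypsubcrit}.
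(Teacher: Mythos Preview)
Your proposal is correct and is precisely the approach the paper takes: the paper does not write out a detailed proof but simply refers to Proposition 4.2 of \cite{KeMe06Pb} (and the analogous results in \cite{KeMe06} and \cite{DuMe07P}), citing as the main ingredients the subcritical scattering theorem of \cite{KeMe06Pb}, the Bahouri--G\'erard profile decomposition \cite{BaGe99}, and the variational estimates of Claim \ref{usefulclaim}---exactly the machinery you outline, with \eqref{ineg.var} playing the role that the strict inequality $E(u_0,u_1)<E(W,0)$ plays in \cite{KeMe06Pb}.
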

The proof of Lemma \ref{lem.compact}, which corresponds to Proposition 4.2 in \cite{KeMe06Pb}, is very close to the proof of Proposition 4.1 in \cite{KeMe06} and of Proposition 2.1 in \cite{DuMe07P}. The two main ingredients are the fact, proven in \cite{KeMe06Pb} that a solution of \eqref{CP} such that $E(u_0,u_1)<E(W,0)$ and $\|\nabla u_0\|_{2}<\|\nabla W\|_{2}$ is globally defined and scatters, a Lemma of concentration-compactness for solution to the linear wave equation due to Bahouri and G\'erard \cite{BaGe99}, and variational estimates as in Claim \ref{usefulclaim}. 

\begin{prop}[\cite{KeMe06Pb}]
\label{propKeMe.a}
Let $u$ be a solution of \eqref{CP} satisfying \eqref{hyp.sub} and \eqref{hyp.noscatter}.
Assume that there exist functions $(\lambda(t),x(t))$ such that
$$K:=\left\{ \big(u(t),\partial_t u(t)\big)_{\lambda(t),x(t)},\; t\in [0,T_+)\right\}$$
has compact closure in $\hdots\times L^2$ and that one of the following holds
\begin{enumerate}
\item \label{propKeMe.finite} $T_+<\infty$, \emph{or}
\item \label{propKeMe.infinite} $T_+=+\infty$ and there exists $\lambda_0>0$ such that $\forall t \in [0,+\infty)$, $\lambda(t)\geq \lambda_0$.
\end{enumerate}
Then $\ds \int u_1\nabla u_0=0$.
\end{prop}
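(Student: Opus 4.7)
The plan is to argue by contradiction, exploiting the Lorentz invariance of \eqref{CP} together with the sub-threshold scattering theorem of \cite{KeMe06Pb}. Suppose $P := \int u_1 \nabla u_0 \ne 0$. By the rotational symmetry of the hypotheses, we may assume $P = (p, 0, \ldots, 0)$ with $p > 0$, and consider, for $|d| < 1$, the Lorentz-boosted function
\[
u_d(t, x) := u\!\left( \frac{t - d\, x_1}{\sqrt{1 - d^2}},\, \frac{x_1 - d\, t}{\sqrt{1 - d^2}},\, x_2, \ldots, x_N \right),
\]
which is again a solution of \eqref{CP} on a suitable space-time region, with initial data $(u_{0,d}, u_{1,d}) \in \hdot \times L^2$. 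Since the energy-momentum density of \eqref{CP} transforms as a 4-vector under Lorentz boosts, a direct computation yields, for an appropriate sign of $d$,
\[
E(u_{0,d}, u_{1,d}) = \frac{E(u_0, u_1) - d\, p}{\sqrt{1 - d^2}} = E(W, 0) - d\, p + O(d^2),
\]
which is strictly less than $E(W, 0)$ for $d > 0$ sufficiently small. Moreover, by continuity in $d$ and the hypothesis \eqref{hyp.sub}, one has $\|\nabla u_{0,d}\|_2 < \|\nabla W\|_2$ for $d$ small. The Kenig--Merle sub-threshold theorem then guarantees $u_d$ is globally defined with $\|u_d\|_{S(\RR)} < \infty$.

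The contradiction will come from transferring this $S$-norm bound back to $u$, contradicting the non-scattering hypothesis \eqref{hyp.noscatter}. In case~\ref{propKeMe.infinite}, where $T_+ = +\infty$ and $\lambda(t) \ge \lambda_0 > 0$, the Lorentz boost is a smooth bi-Lipschitz change of space-time variables, the uniform lower bound on $\lambda(t)$ prevents the solution from concentrating on space-time strips of vanishing measure, and so a standard change of variables in the $S$-integral yields $\|u\|_{S(0,\infty)} \lesssim \|u_d\|_{S(\RR)} < \infty$, which contradicts \eqref{hyp.noscatter}.

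The real obstacle is case~\ref{propKeMe.finite}, where $T_+ < \infty$. Here one must first combine the compactness of $K$ with finite speed of propagation (Proposition~\ref{prop.criterion}.(\ref{fsop})) to show that there exists $x_\infty \in \RR^N$ such that the energy of $(u(t), \partial_t u(t))$ concentrates at $x_\infty$ as $t \to T_+^-$, and hence $u$ is supported, up to an arbitrarily small error, in the backward light cone from $(T_+, x_\infty)$. After translating $x_\infty$ to the origin, a Lorentz boost of sufficiently small velocity maps this cone into an analogous cone in the boosted frame, so $u_d$ again has finite forward blow-up time with a compact (mod modulation) trajectory; the same contradiction with the Kenig--Merle theorem then applies. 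In either case $P = 0$, which is the claim.
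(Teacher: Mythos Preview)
Your Lorentz-boost strategy is indeed the one behind \cite{KeMe06Pb}, Propositions~4.10 and~4.11, to which the present paper simply defers. The paper's own proof, however, is a two-line reduction rather than a boost argument: if $\inf_t \|\partial_t u(t)\|_2 = 0$, the conserved quantity $\int \partial_t u\,\nabla u$ vanishes by Cauchy--Schwarz along a minimizing sequence; otherwise the energy-trapping inequality of Claim~\ref{usefulclaim} yields a uniform gap $\|\nabla u(t)\|_2^2 \le \|\nabla W\|_2^2 - \delta_0$, and it is this gap (not the strict inequality $E<E(W,0)$) that the Kenig--Merle argument actually needs, so one may quote \cite{KeMe06Pb} verbatim. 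You skip this reduction and go straight to the boost.

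That would be fine if your boost argument were complete, but there is a genuine gap. The boosted datum $u_{0,d}(x)=u\big(-dx_1/\sqrt{1-d^2},\,x_1/\sqrt{1-d^2},\,x'\big)$ samples $u$ on a tilted hyperplane that meets \emph{every} time $t\in\RR$; it is not well-defined unless $u$ is global in both time directions, which you do not know here (and which is outright false in case~\ref{propKeMe.finite}). Your appeal to ``continuity in $d$'' for $\|\nabla u_{0,d}\|_2$ therefore has no content until $u_d$ has been given a meaning. In \cite{KeMe06Pb} this is where the real work lies: in case~\ref{propKeMe.finite} one first proves \emph{exact} (not approximate) support of $u$ in the backward cone from $(T_+,x_\infty)$, and in both cases finite speed of propagation is used to construct $u_d$ on a half-space only and to verify that its data lie in the sub-threshold regime. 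Your sketch gestures at this for case~\ref{propKeMe.finite} but ignores it for case~\ref{propKeMe.infinite}; there the sentence about a ``standard change of variables in the $S$-integral'' (the Lorentz Jacobian is $1$, so this is an equality, and the lower bound on $\lambda$ plays no role in it) hides exactly the step that requires justification.
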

\begin{proof}
If $\inf_{t\in (-T_-,T_+)} \|\partial_t u(t)\|_2^2=0$, then, using that $\|\nabla u(t)\|_2$ is bounded, and that $\int \partial_t u \nabla u(t)$ is conserved, we get immediately that $\int u_1\nabla u_0=0$.
Thus we may assume
\begin{equation}
\label{major.nabla}
\exists \delta_0>0, \; \forall t\in (-T_-,T_+),\quad \|\nabla u(t)\|^2_2\leq \|\nabla W\|^2_2-\delta_0.
\end{equation}
In this case, the proof is the same as in 
\cite[Propositions 4.10 and 4.11]{KeMe06Pb} which is shown under assumption \eqref{hyp.noscatter} and
\begin{equation}
\label{hyp.subsub}
\|\nabla u_0\|_2<\|\nabla W\|_2,\quad E(u_0,u_1)<E(W,0).
\end{equation}
This implies by variational estimates \eqref{major.nabla}, which is what is really needed in the proof of the proposition.
\end{proof}
\begin{prop}[\cite{KeMe06Pb}]
\label{propKeMe.b}
Let $u$ be a solution of \eqref{CP} satisfying \eqref{hyp.sub} and \eqref{hyp.noscatter}. Assume
\begin{equation}
\label{hyp.momentnul}
\int u_1\nabla u_0=0.
\end{equation}
Then $T_+=\infty$.
\end{prop}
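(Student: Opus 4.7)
\emph{Proof plan.} I would argue by contradiction: suppose $T_+<\infty$. The strategy mirrors the one of \cite[Prop.~4.11]{KeMe06Pb}, which rules out finite-time blow-up in the strictly subcritical setting \eqref{hyp.subsub}. The point is to combine the modulated compactness of Lemma \ref{lem.compact} with finite speed of propagation to localize the solution near a single concentration point $x_0$, and then to extract a contradiction from \eqref{hyp.momentnul} via a truncated momentum/virial identity.

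The first, preliminary step is to reduce to the coercive variational regime \eqref{major.nabla}: that is, $\|\nabla u(t)\|_2^2\leq \|\nabla W\|_2^2-\delta_0$ uniformly in $t\in[0,T_+)$. An equality $\|\nabla u(t_0)\|_2=\|\nabla W\|_2$ at some time, combined with $E(u(t_0),\partial_t u(t_0))=E(W,0)$, would force equality in Sobolev \eqref{SobolevIn} and hence $u(t_0)=W$ up to the symmetries of the equation by \eqref{CarW}, contradicting $T_+<\infty$. The uniform gap $\delta_0>0$ is then extracted from the compactness of $K$ provided by Lemma \ref{lem.compact}. This is the only place where the critical-energy assumption \eqref{hyp.sub} must be handled differently from the subcritical setting of \cite{KeMe06Pb}; after this reduction, the arguments of \cite{KeMe06Pb} can be transplanted essentially without modification.

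The heart of the proof is the localization step: using Proposition \ref{prop.criterion}(\ref{fsop}) together with the precompactness of $K$, one produces a point $x_0\in\RR^N$ and constants $c,C>0$ such that
\begin{equation*}
\lambda(t)\geq \frac{c}{T_+-t},\qquad |x(t)-x_0|\leq C(T_+-t),
\end{equation*}
so that all the energy is captured in the shrinking ball $B(x_0,T_+-t)$ as $t\to T_+$. One then introduces a smooth radial cutoff $\varphi_R$ centered at $x_0$ and considers the truncated momentum $P_R(t)=\int\varphi_R\,\partial_t u\cdot\nabla u\,dx$: conservation of the full momentum together with \eqref{hyp.momentnul} gives $\int \partial_t u\cdot\nabla u\equiv 0$, while the support localization makes $P_R(t)$ differ from this vanishing quantity by a term that is controlled by the concentration estimates above. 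Combining this with the time-derivative of a truncated virial $\int \varphi_R(x-x_0)\cdot\nabla u\,\partial_t u\,dx$, one produces a dominant term bounded below by a positive multiple of $\|\nabla u(t)\|_2^2\geq c\delta_0>0$ thanks to \eqref{major.nabla}; integrating in time yields the desired contradiction. The main obstacle in the whole argument is the localization step, in particular bounding $|x(t)-x_0|$ by $C(T_+-t)$: this is precisely where \eqref{hyp.momentnul} is crucially used, and it is the technically most delicate part, carried out in \cite{KeMe06Pb} by a careful analysis of the momentum flux through the boundary of the backward light cone with vertex at $(T_+,x_0)$.
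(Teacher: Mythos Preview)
Your sketch correctly identifies the overall framework (contradiction, compactness from Lemma~\ref{lem.compact}, localization via finite speed of propagation), but the decisive step is misidentified. Integrating a truncated virial whose time derivative has a definite sign over the \emph{finite} interval $[0,T_+)$ cannot produce a contradiction: the virial $g_R$ stays bounded by $CR$ throughout, and a uniform lower bound on $|g_R'|$ only forces $|g_R(t)-g_R(0)|\lesssim T_+$. The actual mechanism in \cite[Section~6]{KeMe06Pb}, which the paper simply cites, is of a different nature: after localizing the solution to the backward light cone with vertex $(T_+,x_0)$, one passes to self-similar variables $y=(x-x_0)/(T_+-t)$, $s=-\log(T_+-t)$, and the contradiction comes from a monotonicity formula for a Lyapunov functional in these variables combined with a trace/unique-continuation argument on the boundary $|y|=1$. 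No virial of the type you describe enters.

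A secondary discrepancy: the paper says explicitly (via \cite[Remark~6.14]{KeMe06Pb}) that the only use of the strict subcriticality \eqref{hyp.subsub} in that argument is to bound $\|\nabla u(t)\|_2$, and this bound is supplied here by Claim~\ref{usefulclaim}---not the coercivity gap \eqref{major.nabla}. Your proposed extraction of the gap from compactness of $K$ is also incomplete: if $\|\nabla u(t_n)\|_2\to\|\nabla W\|_2$ with $t_n\to T_+$, compactness only shows the rescaled data converge to $(\pm W,0)$, and continuous dependence then guarantees existence on an interval of length $\sim 1/\lambda(t_n)$, which may shrink to zero, so no contradiction with $T_+<\infty$ follows. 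Finally, Proposition~4.11 of \cite{KeMe06Pb} is the reference for Proposition~\ref{propKeMe.a}; the present result is in Section~6.
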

This result is proven in \cite[Section 6]{KeMe06Pb} under the assumptions \eqref{hyp.noscatter} and \eqref{hyp.subsub}, but assumption \eqref{hyp.subsub} is only used to show that $\|\nabla u(t)\|_2$ is bounded, which is, in our case, a consequence of \eqref{hyp.sub} (see \cite[Remark 6.14]{KeMe06Pb}).

As a consequence of Proposition \ref{propKeMe.a} and \ref{propKeMe.b} we have, following again \cite{KeMe06Pb}:
\begin{prop}
\label{propsub}
Let $u$ be a solution of \eqref{CP} satisfying \eqref{hyp.sub} and \eqref{hyp.noscatter}. Let $\lambda(t)$, $x(t)$ given by Lemma \ref{lem.compact}. Then
\begin{enumerate}
\item \label{T+infty} $\ds T_+=\infty$.
\item \label{lambda.sub}
$\ds \lim_{t\rightarrow +\infty} t\lambda(t)=+\infty.$
\item \label{moment.sub}
$\ds \int_{\RR^N} u_1 \nabla u_0=0.$
\item \label{x.sub}
$\ds \lim_{t\rightarrow +\infty} \frac{x(t)}{t}=0.$
\end{enumerate}
\end{prop}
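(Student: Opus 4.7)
The plan is to chain Propositions \ref{propKeMe.a} and \ref{propKeMe.b} together with Lemma \ref{lem.compact} and finite speed of propagation, proving the four assertions in the order (\ref{T+infty}), a lower bound $\lambda(t)\geq\lambda_0$, then (\ref{lambda.sub}), (\ref{moment.sub}), and (\ref{x.sub}).

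For (\ref{T+infty}) I argue by contradiction: if $T_+<\infty$, case (\ref{propKeMe.finite}) of Proposition \ref{propKeMe.a} (whose compactness hypothesis is supplied by Lemma \ref{lem.compact}) yields $\int u_1\nabla u_0=0$, and Proposition \ref{propKeMe.b} then forces $T_+=+\infty$, a contradiction. Next I establish $\lambda(t)\geq\lambda_0>0$ on $[0,+\infty)$. Suppose instead that $\lambda(t_n)\to 0$ along some $t_n\to+\infty$. Compactness of $K$ yields $R>0$ such that, at each $t$, all but an arbitrarily small fraction of the energy $e(t):=\|\nabla u(t)\|_2^2+\|\partial_t u(t)\|_2^2$ sits inside $B(x(t),R/\lambda(t))$. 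Finite speed of propagation (Proposition \ref{prop.criterion}(\ref{fsop})) applied to $(u_0,u_1)$ confines all but a small fraction of $e(t)$ to $B(0,3M/2+t)$, after choosing $M$ large so that $(u_0,u_1)$ has small tail outside $B(0,M)$. By energy conservation and positivity of the nonlinear term, $e(t)\geq 2E(W,0)>0$, so the two balls must have substantial overlap. Extracting a subsequential limit $(v_0,v_1)$ of the rescaled profiles $(u(t_n),\partial_t u(t_n))_{\lambda(t_n),x(t_n)}$ (available by compactness of $K$) produces a nontrivial limit, and pushing this limit back through the rescaling with $\lambda(t_n)\to 0$ forces $B(x(t_n),R/\lambda(t_n))$ to carry far more energy than $B(0,3M/2+t_n)$ can contain, a geometric contradiction. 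This lower bound gives (\ref{lambda.sub}) via $t\lambda(t)\geq\lambda_0 t\to+\infty$, and then case (\ref{propKeMe.infinite}) of Proposition \ref{propKeMe.a} yields (\ref{moment.sub}).

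For (\ref{x.sub}), the same overlap of the two balls immediately gives $|x(t)|\leq 3M/2+t+R/\lambda_0$, i.e.\ $|x(t)|=O(t)$. To sharpen this to $|x(t)|/t\to 0$, I would use the momentum identity (\ref{moment.sub}): with vanishing conserved momentum, no linear drift of the energy centroid is possible, and letting the compactness tolerance $\eps\to 0$ as $t\to\infty$ converts the geometric overlap into the desired $o(t)$ control on $x(t)$.

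The main obstacle is ruling out $\lambda(t_n)\to 0$ in Step 2: the rescaling-plus-finite-speed argument requires extracting a genuine limit profile from compactness of $K$ and then exploiting that, as $\lambda(t_n)\to 0$, the ball $B(x(t_n),R/\lambda(t_n))$ eclipses the light cone $B(0,3M/2+t_n)$, all while using the strict positivity $e(t)\geq 2E(W,0)$ afforded by the critical-energy threshold. The $o(t)$ refinement in (\ref{x.sub}), which must upgrade a crude $O(t)$ bound using the vanishing momentum, is the other step where care is needed.
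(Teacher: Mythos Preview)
Your argument for (\ref{T+infty}) is correct and matches the paper's.

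The central gap is your claim that $\lambda(t)\geq\lambda_0>0$ for all $t\geq 0$, on which your proofs of (\ref{lambda.sub}), (\ref{moment.sub}), and (\ref{x.sub}) all rest. The geometric argument you sketch does not rule out $\lambda(t_n)\to 0$. When $\lambda(t_n)\to 0$, the compactness ball $B(x(t_n),R/\lambda(t_n))$ has radius $R/\lambda(t_n)\to\infty$, while the light-cone ball $B(0,\tfrac32 M+t_n)$ has radius growing like $t_n$. Both balls are large, both carry essentially all of the conserved energy, and they can overlap (or one can contain the other) without contradiction: nothing forces one to ``carry far more energy'' than the other, since the total energy is fixed. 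Even exploiting the shape of the limit profile $(v_0,v_1)$ only tells you that a fixed fraction of the energy lies in an annulus $\rho/\lambda(t_n)\leq|x-x(t_n)|\leq R/\lambda(t_n)$; this annulus can still sit inside the light cone provided $|x(t_n)|$ grows on the same scale as $1/\lambda(t_n)$, and you have no independent control on $x(t_n)$ at this point.

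The paper does \emph{not} establish a uniform lower bound on $\lambda$ here; in fact its proof of (\ref{moment.sub}) explicitly treats the case $\liminf\lambda(t)=0$. For (\ref{lambda.sub}) it argues directly by contradiction: if $t_n\lambda(t_n)\to\tau_0<\infty$ (so $\lambda(t_n)\to 0$), rescale to get solutions $w_n$ with data in $K$, pass to a limit $w$, note $T_-(w)=\infty$ by Corollary \ref{corolGlobal}, and evaluate $w_n$ at rescaled time $-t_n\lambda(t_n)\to-\tau_0$ to recover a dilated copy of $(u_0,u_1)$ with dilation $\lambda(t_n)\to 0$; this forces $(w(-\tau_0),\partial_s w(-\tau_0))=(0,0)$, contradicting $E(w_0,w_1)=E(W,0)$. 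For (\ref{moment.sub}) the paper then splits: if $\liminf\lambda>0$, apply Proposition \ref{propKeMe.a}(\ref{propKeMe.infinite}) directly; if $\liminf\lambda=0$, choose $t_n$ with $\lambda(t_n)$ minimal on $[0,t_n]$, rescale to build a new critical element $w$ whose parameter satisfies $\widetilde\lambda\geq 1$, apply Proposition \ref{propKeMe.a}(\ref{propKeMe.infinite}) to $w$, and transfer back via conservation of $\int\partial_t u\,\nabla u$. Finally, (\ref{x.sub}) is proved by a genuine virial computation with the localized center of energy $z_{\widetilde R}(t)=\int\psi_{\widetilde R}(x)\,e(u)(t,x)\,dx$, using (\ref{moment.sub}) to bound $z'_{\widetilde R}$ and (\ref{lambda.sub}) to control the remainders; your last paragraph's ``let $\eps\to 0$'' does not replace this step.
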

\begin{corol}
\label{corolGlobal}
Let $u$ be a solution of \eqref{CP} with maximal interval of definition $(-T_-,T_+)$ and such that $E(u_0,u_1)\leq E(W,0)$ and $\|\nabla u_0\|_2\leq \|\nabla W\|_2$. Then
$$ T_+=T_-=+\infty.$$
\end{corol}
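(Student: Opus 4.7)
The plan is to dispose of the corollary by case analysis, combining the rigidity of the Aubin--Talenti extremizer with the non-blow-up statement already proved for threshold solutions.

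I would begin by treating the boundary case $\|\nabla u_0\|_2 = \|\nabla W\|_2$. Applying the convexity inequality \eqref{variationnal} to $v=u_0$ is legitimate since $E(u_0,0)\leq E(u_0,u_1)\leq E(W,0)$, and it forces $E(u_0,0)\geq E(W,0)$, hence $E(u_0,0)=E(W,0)$; the identity $E(u_0,u_1)=E(u_0,0)+\tfrac12\|u_1\|_2^2$ together with $E(u_0,u_1)\leq E(W,0)$ then gives $u_1=0$. Combining $E(u_0,0)=E(W,0)$ with $\|\nabla u_0\|_2=\|\nabla W\|_2$ yields $\|u_0\|_{2^*}=\|W\|_{2^*}=C_N\|\nabla W\|_2$, which is equality in the Sobolev inequality \eqref{SobolevIn}, so \eqref{CarW} gives $u_0=\pm W_{\lambda_0,x_0}$; thus $u$ is the stationary solution $\pm W$ up to the symmetries of the equation, and in particular global.

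In the complementary case $\|\nabla u_0\|_2<\|\nabla W\|_2$, I would split on the energy. If $E(u_0,u_1)<E(W,0)$, the Kenig--Merle theorem recalled right after \eqref{hypsubcrit} provides $T_\pm=+\infty$ (in fact scattering) directly. If instead $E(u_0,u_1)=E(W,0)$, I would distinguish once more: should $\|u\|_{S(0,T_+)}<\infty$, then the contrapositive of the finite blow-up criterion in Proposition \ref{prop.criterion} forces $T_+=+\infty$; otherwise the non-scattering hypothesis \eqref{hyp.noscatter} is in force, so Lemma \ref{lem.compact} and then Proposition \ref{propsub}\eqref{T+infty} deliver $T_+=+\infty$.

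Finally, to obtain $T_-=+\infty$, I would apply the preceding argument to the time-reversed solution $\tilde u(t,x):=u(-t,x)$, whose initial data $(u_0,-u_1)$ satisfy the same hypotheses on gradient norm and energy as $(u_0,u_1)$. The substance of the proof is already contained in Propositions \ref{propKeMe.a}--\ref{propsub}, so the only real task is the bookkeeping needed to extract the strict inequalities demanded by those results from the non-strict hypotheses of the corollary; consequently there is no serious obstacle, and the argument is essentially a tidy compilation of what is on the table.
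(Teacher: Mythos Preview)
Your proof is correct and follows essentially the same case analysis as the paper's own argument: the boundary case via the variational characterization \eqref{CarW}, the strictly subcritical energy via Kenig--Merle, and the threshold energy via the blow-up criterion or Proposition~\ref{propsub}\eqref{T+infty}, with time reversal handling $T_-$. If anything, your treatment of the case $\|\nabla u_0\|_2=\|\nabla W\|_2$ is slightly more careful in deriving $u_1=0$ directly from \eqref{variationnal} rather than invoking Claim~\ref{usefulclaim} (whose stated hypothesis \eqref{hyp.sub} has a strict inequality).
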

\begin{proof}[Proof of Corollary \ref{corolGlobal}]
It is a consequence of \eqref{T+infty}. Indeed, if $\|\nabla u_0\|_2=\|\nabla W\|_2$, then by Claim \ref{usefulclaim}, $u_1=0$. Furthemore by \eqref{variationnal}, $E(W,0)=E(u_0,0)=E(u_0,u_1)$. Thus $\|u_0\|_{2^*}=\|W\|_{2^*}$, and, by the characterization \eqref{CarW} of $W$, $u_0=\pm W_{\lambda_0,x_0}$ for some parameters $\lambda_0,x_0$. By uniqueness in \eqref{CP}, $u$ is one of the stationnary solutions $\pm W_{\lambda_0,x_0}$, which are globally defined.

Let us assume now $\|\nabla u_0\|_2<\|\nabla W\|_2$. Then if $E(u_0,u_1)<E(W,0)$, we are in the setting of \cite[Theorem 1.1]{KeMe06Pb}, which asserts than $T_+=T_-=+\infty$. On the other hand, if $E(u_0,u_1)=E(W,0)$, then if $\|u\|_{S(0,T_+)}<\infty$, we know by the finite blow-up criterion of Proposition \ref{prop.criterion}, that $T_+=\infty$, and if $\|u\|_{S(0,T_+)}=\infty$, then by \eqref{T+infty}, $T_+=\infty$. The same argument for negative times shows that $T_-=\infty$.
 \end{proof}

\begin{proof}[Proof of Proposition \ref{propsub}]

\noindent\emph{Proof of \eqref{T+infty}.}
Let $u$ be as in Lemma \ref{lem.compact}. By Proposition \ref{propKeMe.a}, if $T_+<\infty$, then $\int u_1\nabla u_0=0$, but then by Proposition \ref{propKeMe.b}, $T_+=\infty$ which is a contradiction. Thus $T_+=\infty$, which shows \eqref{T+infty}.

\medskip

\noindent\emph{Proof of \eqref{lambda.sub}.}
Assume that \eqref{lambda.sub} does not hold. Then there exists a sequence $t_n\rightarrow +\infty$ such that
\begin{equation}
\label{absurdtau0}
\lim_{n\rightarrow +\infty} t_n\lambda(t_n)=\tau_0\in[0,+\infty).
\end{equation}
Consider
\begin{gather}
\label{defwn}
w_n(s,y)=\frac{1}{\lambda(t_n)^{\frac{N-2}{2}}} u\left(t_n+\frac{s}{\lambda(t_n)},x(t_n)+\frac{y}{\lambda(t_n)}\right).\\
\label{defwn0}
w_{n0}(y)=w_n(0,y),\quad w_{n1}(y)=\partial_sw_n(0,y).
\end{gather}
By the compactnees of $K$, $(w_{n0},w_{n1})_n$ converges (up to the extraction of a subsequence) in $\hdot\times L^2$. Let $(w_0,w_1)$ be its limit, and $w$ be the solution of \eqref{CP} with initial condition $(w_0,w_1)$. Note that $E(w_0,w_1)=E(W,0)$ and $\|\nabla w_0\|_2\leq \|\nabla W\|_2$. Thus by Corollary \ref{corolGlobal}
$$ T_-(w_0,w_1)=\infty.$$
Furthermore, as $-t_n\lambda(t_n)\rightarrow -\tau_0$, and by the continuity of the Cauchy Problem for \eqref{CP} (\eqref{continuity} in Proposition \ref{prop.criterion}),
\begin{align*}
\frac{1}{\lambda(t_n)^{\frac{N-2}{2}}} u_0\left(x(t_n)+\frac{y}{\lambda(t_n)}\right)&=w_n(-t_n\lambda(t_n),y)\underset{n \rightarrow\infty}{\longrightarrow}  w(-\tau_0,y) \text{ in }\hdot\\
\frac{1}{\lambda(t_n)^{\frac{N}{2}}} u_1\left(x(t_n)+\frac{y}{\lambda(t_n)}\right)&=\partial_s w_n(-t_n\lambda(t_n),y)\underset{n \rightarrow\infty}{\longrightarrow}\partial_s w(-\tau_0)\text{ in }L^2.
\end{align*}
Since $\lambda(t_n)$ tends to $0$, we obtain that $w(-\tau_0)=0$ and $\partial_s w(-\tau_0)=0$,
which contradicts the equality $E(w_0,w_1)=E(W,0)$. The proof of \eqref{lambda.sub} is complete.

\medskip

\noindent\emph{Proof of \eqref{moment.sub}.}
According to \eqref{T+infty}, $T_+=\infty$. By Proposition \ref{propKeMe.b}, \eqref{moment.sub} holds unless
\begin{equation}
\label{liminflambda}
\liminf_{t\geq 0} \lambda(t)=0.
\end{equation}
Let us show \eqref{moment.sub} in this case. We will use the same argument as in the proof of Theorem 7.1 in \cite{KeMe06Pb}. Let us sketch it. Consider $(t_n)_n$ such that 
\begin{equation}
\label{hyp.lambda.0}
t_n\underset{n\rightarrow \infty}{\longrightarrow} +\infty, \quad \lambda(t_n) \underset{n\rightarrow \infty}{\longrightarrow}0,\quad \text{and}\quad \forall t\in [0,t_n),\;\lambda(t)>\lambda(t_n).
\end{equation}
Define $w_n$, $w_{n0}$ and $w_{n1}$ by \eqref{defwn} and \eqref{defwn0},
and consider $(w_0,w_1) \in \hdot\times L^2$ such that 
\begin{equation}
\label{limwn0}
\lim_{n\rightarrow +\infty} (w_{n0},w_{n1})_n=(w_0,w_1) \text{ in }\hdot\times L^2,
\end{equation}
and $w$ the solution of \eqref{CP} with initial conditions $(w_0,w_1)$. 

By Corollary \ref{corolGlobal}, $T_-(w)=\infty$. By \eqref{hyp.noscatter} and \eqref{continuity} in Proposition \ref{prop.criterion},
\begin{equation}
\label{wSinfty}
\|w\|_{S(-\infty,0)}=+\infty.
\end{equation}
Now, fix $s\leq 0$, and consider
\begin{equation*}
\tlbda_n(s)=\frac{\lambda\Big(t_n+\frac{s}{\lambda(t_n)}\Big)}{\lambda(t_n)},\quad \tx_n(s)=\lambda(t_n)\left[x\Big(t_n+\frac{s}{\lambda(t_n)}\Big)-x(t_n)\right].
\end{equation*}

By \eqref{lambda.sub}, $t_n\lambda(t_n)\rightarrow +\infty$, and thus for large $n$, $0<t_n+\frac{s}{\lambda(t_n)}\leq t_n$. Hence by \eqref{hyp.lambda.0},
\begin{equation}
\label{tlbda1}
\exists n_0(s), \; \forall n\geq n_0(s),\quad\tlbda_n(s)\geq 1.
\end{equation}
Now, for $t=t(n,s):=t_n+\frac{s}{\lambda_n(s)}$,
\begin{gather}
\label{vn0} 
v_{n0}(s,y):=\frac{1}{\tlbda_n(s)^{\frac{N-2}{2}}}w_n\left(s,\tx_n(s)+\frac{y}{\tlbda_n(s)}\right)=\frac{1}{\lambda(t)^{\frac{N-2}{2}}}u\left(t,x(t)+\frac{y}{\lambda(t)}\right),\\
\label{vn1}
v_{n1}(s,y):=\frac{1}{\tlbda_n(s)^{\frac{N}{2}}} (\partial_sw_n)\left(s,\tx_n(s)+\frac{y}{\tlbda_n(s)}\right)=
\frac{1}{\lambda(t)^{\frac{N}{2}}}(\partial_tu)\left(t,x(t)+\frac{y}{\lambda(t)}\right).
\end{gather}
which shows that $(v_{n0}(s),v_{n1}(s))\in K$. In view of \eqref{limwn0} and the continuity property \eqref{continuity} in Proposition \ref{prop.criterion}, $(w_n(s),\partial_sw_n(s))$ converges in $\hdot\times L^2$. By the compactness of $\overline{K}$ and \eqref{tlbda1} this shows that there exists $\tlbda(s)\in [1,+\infty)$, $\tx(s)\in \RR^N$ such that for some subsequences,
\begin{equation*}
\lim_{n\rightarrow +\infty} \tlbda_n(s))=\tlbda(s),\quad \lim_{n\rightarrow +\infty} \tx_n(s))=\tx(s)
\end{equation*}
and
$$ \left(\frac{1}{\tlbda(s)^{\frac{N-2}{2}}}w\left(s,\tx(s)+\frac{\cdot}{\tlbda(s)}\right),
\frac{1}{\tlbda(s)^{\frac{N}{2}}} (\partial_sw)\left(s,\tx(s)+\frac{\cdot}{\tlbda(s)}\right)\right)\in \overline{K}.$$ 

Thus $w$ fullfills all the assumptions of Proposition \ref{propKeMe.a}, case \eqref{propKeMe.infinite}, which shows that  $\int w_1\nabla w_0=0$. By \eqref{limwn0} and the conservation of $\int \partial_t u(t) \nabla u(t)$
$$\int u_1\nabla u_0=\int \partial_t u(t_n)\nabla u(t_n)=\int w_{n1}\nabla w_{n0}\underset{n\rightarrow\infty}{\longrightarrow} \int w_1\nabla w_0=0.$$
The proof of \eqref{moment.sub} is complete.
\medskip

\noindent\emph{Proof of \eqref{x.sub}.}

We follow the lines of the proof of Lemma 5.5 in \cite{KeMe06Pb}. Assume that \eqref{x.sub} does not hold, and consider $t_n\rightarrow +\infty$ such that
\begin{equation*}
\frac{|x(t_n)|}{t_n}\geq \eps_0>0.
\end{equation*}
In particular, $x(t)$ is not bounded. We may assume that $x(0)=0$, $\lambda(0)=1$ and that $x$ and $\lambda$ are continuous. For $R>0$, let
$$ t_0(R):=\inf\big\{t\geq 0,\;|x(t)|\geq R\big\}\in [0,+\infty).$$
Thus $t_0(R)$ is well defined, $t_0(R)>0$,  $|x(t)|<R$ for $0\leq t<t_0(R)$ and $|x(t_0(R))|=R$. As a consequence, if $R_n:=|x(t_n)|$, then $t_n\geq t_0(R_n)$, which implies
\begin{equation}
\label{absurd.x(t)}
\frac{R_n}{t_0(R_n)}\geq \eps_0.
\end{equation}
Let 
\begin{equation}
\label{defe(u)}
e(u):=\frac{1}{2}|\partial_t u|^2+\frac{1}{2}|\nabla u|^2-\frac{1}{2^*}|u|^{2^*},\quad
r(u):=|\partial_t u|^2+|\nabla u|^2+\frac{1}{|x|^2}|u|^2+|u|^{2^*}.
\end{equation}
By compactness of $K$, we know that for each $\eps>0$, there exists $R_0(\eps)$ such that
\begin{equation}
\label{defR_0}
\forall t\geq 0,\quad \int_{\lambda(t)|x-x(t)|\geq R_0(\eps)} r(u)dx \leq \eps. 
\end{equation}
Let $\varphi\in C_0^{\infty}(\RR^N)$ be radial, nonincreasing and such that $\varphi(x)=1$ for $|x|\leq 1$ and $\varphi(x)=0$ if $|x|\geq 2$. Let $\psi_R(x):=x\varphi(\frac{x}{R})$. Let $\eps>0$, to be chosen later independently of $n$ and
\begin{align*}
\tR_n&:=\frac{R_0(\eps)} {\inf_{t\in [0,t_0(R_n)]} \lambda(t)}+|x(t_n)|=\frac{R_0(\eps)}{\inf_{t\in [0,t_0(R_n)]} \lambda(t)}+R_n\\
z_{\tR_n}(t)&:=\int_{\RR^N} \psi_{\tR_n}(x) e(u)(t,x) dx.
\end{align*}
Note that 
\begin{equation}
\label{minor.important}
0\leq t\leq t_0(R_n),\; |x|\geq \tR_n \Longrightarrow \lambda(t)|x-x(t)|\geq \lambda(t)(\tR_n-R_n)\geq R_0(\eps).
\end{equation}

\medskip

\noindent\emph {Step 1. Bound on $z_{\tR_n}'(t)$.}
Let us show
\begin{equation}
\label{bound.z'R}
\exists C_1>0, \; \forall n,\quad 0\leq t\leq t_0(R_n)\Longrightarrow|z'_{\tR_n}(t)|\leq C_1\eps.
\end{equation}

Indeed by explicit calculation and equation \eqref{CP} (see \cite[Lemma 5.3]{KeMe06Pb}), we get  recalling that by \eqref{moment.sub} and the conservation of the moment $\int \partial_t u(t)\nabla u(t)=\int u_1\nabla u_0=0$ 
\begin{equation}
\label{z'R}
z'_{\tR_n}(t)=\int_{\RR^N}\partial_t u\nabla u+O\left(\int_{|x|\geq \tR_n} r(u)dx\right)=O\left(\int_{|x|\geq \tR_n} r(u)dx\right)
\end{equation}
Estimate \eqref{bound.z'R} then follows from \eqref{defR_0} and \eqref{minor.important} 

\medskip

\noindent\emph{Step 2. Bounds on $z_{\tR_n}(0)$ and $z_{\tR_n}(t_0(R_n))$.} We next show
%Let us complete the proof of the Claim. On one hand, we have
\begin{align}
\label{boundzR(0)}
|z_{\tR_n}(0)|&\leq 2\eps \tR_n+MR_0(\eps)\\
\label{boundzR(t_0)}
R_n\big(E(W)-\eps\big)-2\tR_n\eps-M\frac{R_0(\eps)}{\lambda(t_0(R_n))} &\leq  |z_{\tR_n}(t_0(R_n))|,
\end{align}
where $M:=\sup_{t\geq 0} \int r(u)(t,x) dx\leq C\|\nabla W\|^2_2$ by Claim \ref{usefulclaim}.
We have.
\begin{equation*}
|z_{\tR_n}(0)|= \int_{|x|\geq R_0(\eps)}\psi_{\tR_n}e(u)dx+\int_{|x|\leq R_0(\eps)}\psi_{\tR_n}e(u) dx.
\end{equation*}
Recall that $|\psi_{\tR_n}(x)|\leq |x|\leq 2\tR_n$. According to \eqref{defR_0} (using that $x(0)=0$ and $\lambda(0)=1$), the first term is bounded by $2\tR_n\eps$. The second term is bounded by $R_0(\eps)\int r(u) dx$, which yields \eqref{boundzR(0)}.

Write, for $t\in[0,t_0(R_n)]$
\begin{equation}
\label{formule.zt}
z_{\tR_n}\big(t\big)=\int_{\lambda(t)|x-x(t)|\geq R_0(\eps)}\psi_{\tR_n}e(u)dx+\int_{\lambda(t)|x-x(t)|\leq R_0(\eps)}\psi_{\tR_n}e(u) dx
\end{equation}
Using again \eqref{defR_0}, we get $
\left|\int_{\lambda(t)|x-x(t)|\geq R_0(\eps)}\psi_{\tR_n}e(u)dx\right|\leq 2\tR_n\eps.$
According to \eqref{minor.important} and the definition of $\psi_{\tR_n}$, if $\lambda(t)|x-x(t)|< R_0(\eps)$, then $|x|<\tR_n$ which implies $\psi_{\tR_n}(x)=x$. Thus the second term of \eqref{formule.zt} is
\begin{multline}
\label{zt2}
\int_{\lambda(t)|x-x(t)|\leq R_0(\eps)}x e(u) dx\\=\int_{\lambda(t)|x-x(t)|\leq R_0(\eps)}x(t) e(u) dx+\int_{\lambda(t)|x-x(t)|\leq R_0(\eps)}(x-x(t)) e(u) dx.
\end{multline}
The second term in the right-hand side of \eqref{zt2} is bounded by $\frac{R_0(\eps)}{\lambda(t)}\int r(u)(t,x)dx$. On the other hand
\begin{gather*}
\int_{\lambda(t)|x-x(t)|\leq R_0(\eps)}x(t) e(u) dx=x(t)E(W)-\int_{\lambda(t)|x-x(t)|\geq R_0(\eps)}x(t) e(u) dx
\end{gather*}
and thus by \eqref{defR_0}
\begin{equation}
\label{zt3}
\left|\int_{\lambda(t)|x-x(t)|\leq R_0(\eps)}x(t) e(u) dx\right|\geq |x(t)|(E(W)-\eps).
\end{equation}
Combining \eqref{zt2} and \eqref{zt3} with $t=t_0(R_n)$ we get \eqref{boundzR(t_0)}.

\medskip
\noindent\emph{Step 3. Conclusion of the proof of \eqref{x.sub}.}
According to the two precedent steps 
\begin{align}
\notag
C_1\eps t_0(R_n)&\geq R_n(E(W)-\eps)-4\tR_n\eps-\frac{2MR_0(\eps)}{\lambda(t_0(R_n))}\\
\label{boundRnConclu}
C_1\eps&\geq \frac{R_n}{t_0(R_n)}(E(W)-\eps)-4\frac{\tR_n}{t_0(R_n)}\eps-M\frac{R_0(\eps)}{t_0(R_n)\lambda(t_0(R_n))}.
\end{align}
As a consequence of \eqref{lambda.sub}  
$$t_0(R_n)\lambda(t_0(R_n))\rightarrow 0 \text{ as }n\rightarrow \infty.$$ 
Furthermore
$\frac{\tR_n}{t_0(R_n)}=\frac{R_n}{t_0(R_n)}+\frac{R_0(\eps)}{t_0(R_n)\inf_{t\in [0,t_0(R_n)]} \lambda(t)}.$
Again by \eqref{lambda.sub},
$t_0(R_n)\inf_{t\in [0,t_0(R_n)]} \lambda(t)$ tends to $+\infty$. Thus
$$ \frac{\tR_n}{t_0(R_n)}=\frac{R_n}{t_0(R_n)}+o(1),\; n\rightarrow \infty.$$
Together with \eqref{absurd.x(t)} and \eqref{boundRnConclu}, we get
\begin{equation*}
C_1\eps\geq \frac{R_n}{t_0(R_n)}(E(W)-5\eps)+o(1)\geq \eps_0(E(W)-5\eps)+o(1), \quad n\rightarrow \infty.
\end{equation*}
Chosing $\eps$ small enough, so that $C_1\eps\leq \frac{\eps_0}{4}E(W)$ and $E(W)-5\eps>\frac 12 E(W)$ we get a contradiction.
\end{proof}

\section{Convergence to $W$ in the subcritical case}
\label{sec.sub}
The aim of this section is to prove the following result in the subcritical  situation ($\|\nabla u_0\|_2<\|\nabla W\|_2$), which is the nonradial version of the result of \cite[Proposition 3.1]{DuMe07P} in the NLS radial setting. The main difficulty here compared to \cite{DuMe07P} and \cite{KeMe06Pb} is to control the space localization of the energy (see \S \ref{sub.CVseq}).
\begin{prop}
\label{prop.CVexpu}
Let $u$ be a solution of \eqref{CP} such that \eqref{hyp.sub} and \eqref{hyp.noscatter} hold.
Then there exist $\lambda_0,x_0$ such that
\begin{equation}
\label{CVexpu}
\|\nabla(u(t)-W_{\lambda_0,x_0})\|_2+\|\partial_t u\|_2\leq Ce^{-ct}.
\end{equation}
Furthermore,
\begin{equation}
\label{scattering-}
\|u\|_{S(-\infty,0)}<\infty.
\end{equation}
\end{prop}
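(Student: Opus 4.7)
I plan to argue in three stages: first, identify the $\omega$-limit set (in $\hdot\times L^2$) of the rescaled trajectory $(u(t),\partial_t u(t))_{\lambda(t),x(t)}$ as a subset of the $W$-orbit $\{\pm W_{\mu,y}\}\times\{0\}$; second, modulate around $W$ and extract the exponential convergence in \eqref{CVexpu} by a scalar ODE / Lyapunov argument on the unstable mode; third, deduce \eqref{scattering-} via a rigidity argument on that same mode. Up to replacing $u$ by $-u$ (a symmetry of \eqref{CP}) one may ignore sign issues.

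For the first stage, given any sequence $t_n\to+\infty$, Lemma~\ref{lem.compact} provides, up to extraction, a limit $(V_0,V_1)$ of $(u(t_n),\partial_t u(t_n))_{\lambda(t_n),x(t_n)}$; let $V$ be the corresponding solution of \eqref{CP}. Passing the properties of $u$ to the limit via the invariance of $\overline{K}$ under the flow, $V$ has $E(V_0,V_1)=E(W,0)$, $\|\nabla V_0\|_2\leq\|\nabla W\|_2$, $\int V_1\nabla V_0=0$, is globally defined by Corollary~\ref{corolGlobal}, and its modulated trajectory is relatively compact on all of $\RR$. A rigidity argument in the spirit of \cite{KeMe06Pb}, whose key ingredient in the nonradial case is the space localization of the energy furnished by Proposition~\ref{propsub}\eqref{lambda.sub} and \eqref{x.sub}, rules out $\|\nabla V_0\|_2<\|\nabla W\|_2$. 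Claim~\ref{usefulclaim} then forces $V_1\equiv 0$, and the variational characterization \eqref{CarW} yields $V_0=\pm W_{\mu,y}$ for some $(\mu,y)$.

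Once the trajectory is trapped near $\{\pm W_{\mu,y}\}$, the implicit function theorem gives continuous modulation parameters $(\mu(t),y(t))$ and a decomposition
\begin{equation*}
u(t)=W_{\mu(t),y(t)}+h(t),
\end{equation*}
where $h(t)$ is orthogonal to the $N+1$ tangent directions $\partial_\mu W_{\mu(t),y(t)}$ and $\partial_{y_i}W_{\mu(t),y(t)}$, together with the standard bounds $|\mu'/\mu|+|y'|\leq C(\|\nabla h\|_2+\|\partial_t u\|_2)$. Expanding $E(u,\partial_t u)=E(W,0)$ to second order produces the linearized energy $\langle Lh,h\rangle+\|\partial_t u\|_2^2$ with $L=-\Delta-\frac{N+2}{N-2}W^{4/(N-2)}$. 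The spectrum of $L$ consists of a single simple negative eigenvalue $-e_0^2$ (with eigenfunction $\YYY$), a kernel reduced to the symmetries, and a coercive positive part; decomposing $h=a(t)\YYY+h^\perp$ with $h^\perp$ in the coercive subspace yields
\begin{equation*}
\|\nabla h^\perp\|_2^2+\|\partial_t u\|_2^2\leq C\,a(t)^2.
\end{equation*}
Projecting the full equation on $\YYY$ produces the scalar ODE $a''=e_0^2 a+O(a^2)$, and the Lyapunov functional $(a'-e_0 a)^2$ satisfies a differential inequality $f'\leq -2e_0 f+\mathrm{h.o.t.}$, yielding exponential decay of $a$ and hence of $\|\nabla h\|_2+\|\partial_t u\|_2$. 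Integrating the modulation bounds produces the limits $\lambda_0,x_0$ with the same rate, giving \eqref{CVexpu}. I expect this second stage to be the main technical obstacle: coercivity of $L$ must be established modulo scaling \emph{and} all $N$ translations simultaneously, and the translational drift, absent from the radial NLS setting of \cite{DuMe07P}, must be controlled using the vanishing momentum \eqref{moment.sub} combined with \eqref{x.sub}.

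For \eqref{scattering-}, assume for contradiction that $\|u\|_{S(-\infty,0)}=+\infty$. Then $v(t):=u(-t)$ also satisfies \eqref{hyp.sub} and \eqref{hyp.noscatter}, so the first two stages applied to $v$ give parameters $(\lambda_1,x_1)$ with $u(t)\to W_{\lambda_1,x_1}$ in $\hdot$ and $\partial_t u(t)\to 0$ in $L^2$ exponentially as $t\to-\infty$. In the modulation decomposition above, the coefficient $a(t)$ along $\YYY$ would then satisfy $a(t)=O(e^{-e_0|t|})$ as $|t|\to+\infty$; combined with $a''=e_0^2 a+O(a^2)$, this forces $a\equiv 0$, hence $h\equiv 0$, so that $u$ is itself a modulation of $W$, contradicting $\|\nabla u_0\|_2<\|\nabla W\|_2$.
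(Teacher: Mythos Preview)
Your first stage contains the real gap. You invoke ``a rigidity argument in the spirit of \cite{KeMe06Pb}'' to conclude that the limit $V$ must equal $\pm W_{\mu,y}$, but the rigidity theorem of \cite{KeMe06Pb} applies only under the strict condition $E(V_0,V_1)<E(W,0)$, where the coercivity gap $\|\nabla V(t)\|_2^2-\|V(t)\|_{2^*}^{2^*}\geq\delta>0$ drives the virial argument. At the threshold $E(V_0,V_1)=E(W,0)$ that gap disappears precisely as $\DD(t)\to 0$, and the limit $V$ satisfies exactly the same hypotheses you started from for $u$ (plus two-sided compactness). So your argument is circular: proving rigidity for $V$ at threshold is essentially the content of Proposition~\ref{prop.CVexpu}. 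The localization bounds from Proposition~\ref{propsub}\eqref{lambda.sub},\eqref{x.sub} that you cite are for $u$, not for $V$; they do not transfer automatically.

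The paper's route is genuinely different from yours and avoids this circularity. No spectral decomposition along $\YYY$ and no scalar ODE appear in Section~\ref{sec.sub}. Instead, a first virial computation (Lemma~\ref{lem.dd.0}) gives only $\frac{1}{T}\int_0^T\DD\to 0$, hence a \emph{subsequence} $\tau_n$ with $\DD(\tau_n)\to 0$. The heart of the proof is then two quantitative lemmas: a localized virial estimate $\int_\sigma^\tau\DD\leq C\big(\sup_{[\sigma,\tau]}|x|+\tfrac{1}{\lambda}\big)(\DD(\sigma)+\DD(\tau))$ (Lemma~\ref{lem.virial}), and a parameter-control estimate $|x(\tau)-x(\sigma)|+\big|\tfrac{1}{\lambda(\tau)}-\tfrac{1}{\lambda(\sigma)}\big|\leq C\int_\sigma^\tau\DD$ (Lemma~\ref{lem.bound.x}). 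Feeding these into each other along the good subsequence yields boundedness of $(x,1/\lambda)$, then $\int_t^\infty\DD\leq C\DD(t)$, hence exponential decay of $\int_t^\infty\DD$ and finally of $\DD$ itself via the modulation bound $|\alpha'|\lesssim\mu\,\DD$. The translation control you flag as the main nonradial difficulty is exactly Lemma~\ref{lem.bound.x}; you have not proposed a substitute for it. Your Lyapunov quantity $(a'-e_0 a)^2$ would in any case only control the stable mode of $a''=e_0^2 a$; boundedness of the unstable mode $a'+e_0 a$ requires an additional argument using the a~priori smallness of $a$ on the whole half-line, which you have not supplied.
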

\begin{remark}
 From Corollary \ref{corolGlobal}, \eqref{hyp.sub} implies that $u$ is defined on $\RR$.
\end{remark}

\subsection{Convergence for a subsequence}
\label{sub.seq}
Let
\begin{equation}
\label{defdd}
\DD(t):=\left|\int |\nabla u(t,x)|^2 dx -\int |\nabla W(x)|^2 dx \right|+\int|\partial_t u(t,x)|^2dx.
\end{equation}%+\int |\partial_t u(t,x)|^2 dx.$$
Then the equality $E(u(t),\partial_tu(t))=E(W,0)$ implies $\Big|\|u\|_{2^*}^{2^*}-\|W\|_{2^*}^{2^*}\Big|\leq C\dd(t)$. It is known (see \cite{Li85Reb}) that the variational characterization \eqref{CarW} of $W$ by Aubin and Talenti implies that there exists a function $\eps_0(\delta)$ such that $\eps_0(\delta)\rightarrow 0$ as $\delta\rightarrow 0$ and, for any fixed $t$
\begin{equation}
\label{CarW2}
\inf_{\mu,X,\pm}\left\|\nabla \big(u_{\mu,X}(t)\pm W\big)\right\|_2\leq \eps_0\big(\DD(t)\big).
\end{equation}
 The key point of the proof of Proposition \ref{prop.CVexpu} is to show that $\dd(t)$ tends to $0$, which by \eqref{CarW2} implies that there exists $(\lambda(t),x(t))_{t\geq 0}$ such that $u_{\lambda(t),x(t)}(t)-W$ tends to $0$ in $\hdot$ as $t$ tends to $+\infty$. We first show:
\begin{lemma}
\label{lem.dd.0}
Let $u$ be a solution of \eqref{CP} such that \eqref{hyp.sub} and \eqref{hyp.noscatter} hold.
Then 
\begin{equation}
\label{dd.CV.0}
\lim_{T\rightarrow +\infty} \frac 1T \int_0^T \DD(t) dt \underset{t\rightarrow +\infty}{\rightarrow} 0.
\end{equation}
\end{lemma}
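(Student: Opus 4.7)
The plan is a localised Pohozaev/virial argument. Fix a radial cutoff $\phi\in C_0^\infty(\RR^N)$ with $\phi\equiv 1$ on $\{|x|\leq 1\}$, $\supp\phi\subset\{|x|\leq 2\}$, and set $\phi_R(x)=\phi(x/R)$. I would introduce the localised versions of the two classical Pohozaev multipliers $u$ and $x\cdot\nabla u+\tfrac{N-2}{2}u$:
$$I_R(t) := \int \phi_R\, u\,\partial_t u\,dx \;+\; \frac{2(N-1)}{N-2}\int \phi_R\Bigl(x\cdot\nabla u+\tfrac{N-2}{2}u\Bigr)\partial_t u\,dx.$$
A direct computation based on equation \eqref{CP}, integration by parts, the classical Pohozaev identities, and the equality $E(u,\partial_t u)=E(W,0)=\frac{1}{N}\|\nabla W\|_2^2$ (used to eliminate $\|u\|_{2^*}^{2^*}$) shows that the coefficient $\frac{2(N-1)}{N-2}$ is exactly what is needed to cancel the $\|\partial_t u\|_2^2$ contributions of the two multipliers, yielding
$$I_R'(t) \;=\; -\frac{2}{N-2}\bigl(\|\nabla W\|_2^2-\|\nabla u(t)\|_2^2\bigr) + \EEE_R(t),$$
where $\EEE_R(t)$ collects every term generated by $\nabla\phi_R$ or $1-\phi_R$ and, via Hardy's inequality, satisfies $|\EEE_R(t)|\leq C\int_{|x|\geq R}r(u)(t,x)\,dx$, with $r(u)$ as in \eqref{defe(u)}. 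The trapping estimate of Claim \ref{usefulclaim} gives $\DD(t)\leq \tfrac{N+2}{N}(\|\nabla W\|_2^2-\|\nabla u\|_2^2)$, so $I_R'(t)\leq -c\,\DD(t)+C\int_{|x|\geq R}r(u)$ for some $c>0$. The same Claim \ref{usefulclaim} together with Hardy's inequality also yield $|I_R(t)|\leq CR$ uniformly in $t\geq 0$.

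Now fix $\eps>0$. By Lemma \ref{lem.compact} there exists $R_0(\eps)>0$ with $\int_{\lambda(t)|x-x(t)|\geq R_0(\eps)}r(u)(t,x)\,dx\leq \eps$ for every $t\geq 0$. Set
$$R(T):=1+\sup_{0\leq t\leq T}\Bigl(|x(t)|+\tfrac{R_0(\eps)}{\lambda(t)}\Bigr).$$
With this choice, $\{|x|\geq R(T)\}\subset\{\lambda(t)|x-x(t)|\geq R_0(\eps)\}$ for every $t\in[0,T]$, so the error in the virial inequality is at most $C\eps$ throughout $[0,T]$; and parts \eqref{lambda.sub} and \eqref{x.sub} of Proposition \ref{propsub} give $R(T)=o(T)$ as $T\to+\infty$. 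Integrating,
$$c\int_0^T\DD(t)\,dt \;\leq\; |I_{R(T)}(0)|+|I_{R(T)}(T)|+CT\eps \;\leq\; 2CR(T)+CT\eps.$$
Dividing by $T$ and letting first $T\to+\infty$ and then $\eps\to 0$ proves \eqref{dd.CV.0}.

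The main obstacle in executing this plan is the uniform-in-$t$ spatial localisation of the energy at a scale that is $o(T)$: the cutoff errors have to be made small on all of $[0,T]$ while keeping $R(T)/T\to 0$. This is precisely where the non-trivial decay rates $x(t)/t\to 0$ and $1/(t\lambda(t))\to 0$ from Proposition \ref{propsub} are used, and it is here that the non-radial setting makes the analysis substantially more delicate than the radial NLS treatment of \cite{DuMe07P}.
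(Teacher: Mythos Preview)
Your proof is correct and follows essentially the same localised virial/Pohozaev scheme as the paper: the paper uses $g_R(t)=\int \psi_R\cdot\nabla u\,\partial_t u+\tfrac{N-1}{2}\int\varphi_R u\,\partial_t u$ with $R=\eps T$, obtains $g_R'(t)\leq -c\,\dd(t)+C\eps$ for $t\in[t_1(\eps),T]$ via Claim~\ref{usefulclaim} and the compactness of $K$, and integrates. The only (cosmetic) difference is your choice of coefficient $\tfrac{2(N-1)}{N-2}$, which cancels $\|\partial_t u\|_2^2$ directly rather than controlling it afterwards through the trapping inequality \eqref{ineg.var}, and your $R(T)=o(T)$ in place of the paper's $R=\eps T$; both lead to the same conclusion by the same mechanism.
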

\begin{corol}
\label{corol.subseq}
There exists an increasing sequence $\tau_n\rightarrow +\infty$ such that
$$ \lim_{n\rightarrow +\infty} \DD(\tau_n)=0.$$
\end{corol}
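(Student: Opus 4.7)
The corollary follows immediately from Lemma \ref{lem.dd.0} together with the nonnegativity of $\DD$ (which is manifest from its definition \eqref{defdd} as a sum of an absolute value and a squared $L^2$-norm). The plan is to show that $\liminf_{t\to+\infty}\DD(t)=0$, from which the existence of the desired increasing sequence $(\tau_n)$ is a routine extraction.

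Suppose for contradiction that $\liminf_{t\to+\infty}\DD(t)=c>0$. Then there exists $T_0$ with $\DD(t)\geq c/2$ for all $t\geq T_0$, and for $T>T_0$ one has
$$\frac{1}{T}\int_0^T\DD(t)\,dt \;\geq\; \frac{1}{T}\int_{T_0}^T\DD(t)\,dt\;\geq\;\frac{c}{2}\cdot\frac{T-T_0}{T},$$
which tends to $c/2>0$ as $T\to+\infty$, contradicting \eqref{dd.CV.0}. Hence $\liminf_{t\to+\infty}\DD(t)=0$, and one can inductively pick $\tau_{n+1}>\tau_n+1$ satisfying $\DD(\tau_{n+1})<1/(n+1)$ to obtain an increasing sequence $\tau_n\to+\infty$ along which $\DD(\tau_n)\to 0$. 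There is no substantive obstacle here; all the real content resides in Lemma \ref{lem.dd.0}, and the corollary is merely the standard passage from Ces\`aro convergence to the existence of a pointwise-convergent subsequence for a nonnegative integrand.
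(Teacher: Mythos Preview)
Your proposal is correct and is precisely the standard argument the paper tacitly relies on: the paper does not actually prove the corollary separately (the proof that follows it in the text is the proof of Lemma~\ref{lem.dd.0}), treating the passage from the Ces\`aro limit \eqref{dd.CV.0} to a subsequence with $\DD(\tau_n)\to 0$ as immediate.
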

\begin{proof}
Let $\varphi$ be a $C^{\infty}$ function such that $\varphi(x)=1$ if $|x|\leq 1$ and $\varphi(x)=0$ if $|x|\geq 2$. For $R>0$, write 
\begin{equation}
\label{defphiR}
\varphi_R(x)=\varphi(x/R) \text{ and }\psi_R(x)=x\varphi(x/R).
\end{equation}
Let $\eps>0$. Consider as in \cite[\S 5]{KeMe06Pb}
\begin{equation}
\label{defgR}
g_R(t):=\int \psi_R.\nabla_x u \partial_t u dx+(\frac{N-1}{2}) \int \varphi_R u \partial_t u dx.
\end{equation}

\noindent\emph{Step 1. Bound on $g_R(t)$.} We first show
\begin{equation}
\label{boundgR}
\exists C_1>0,\;\forall t\geq 0,\quad |g_R(t)|\leq C_1 R.
\end{equation}
Indeed, note that $\supp \varphi_R \cup \supp \psi_R \subset \{|x|\leq 2R\}$, so that $|\psi_R(x)|\leq 2R$ and $|\varphi_R(x)|\leq \frac{2R}{|x|}$. Hence
$$
|g_R(t)|\leq 2R \int |\nabla_x u \partial_t u| dx+\frac{N}{2} \int \frac{2R}{|x|}|u \partial_t u |dx,$$
which yields \eqref{boundgR} by Hardy's inequality and Claim \ref{usefulclaim}.

\noindent\emph{Step 2. Bound on $g'_R(t)$.}
There exists $C_2>0$, $c>0$, such that for all $\eps>0$, there exists $t_1=t_1(\eps)>0$ such that 
\begin{equation}
\label{boundg'R}
\forall t\in[t_1,T],\quad g_{\eps T}'(t)\leq -c\dd(t)+C_2 \eps.
\end{equation}
By explicit computation and the equality $E(u_0,u_1)=E(W,0)$ (see Claim \ref{calculs} in the appendix)
\begin{equation*}
g_{\eps T}'(t)=\frac{1}{N-2}\int |\partial_t u|^2dx-\frac{1}{N-2}\left(\int |\nabla W|^2dx-\int |\nabla u|^2dx\right)+O\left(\int_{|x|\geq \eps T} r(u)dx\right),
\end{equation*}
where $r(u)$ is defined in \eqref{defe(u)}. Note that by Claim \ref{usefulclaim} an elementary calculation
\begin{multline}
\label{bound.important}
-\frac{1}{N-2}\int |\partial_t u|^2dx+\frac{1}{N-2}\left(\int |\nabla W|^2dx-\int |\nabla u|^2dx\right)
\\ \geq \frac{1}{N+2} \left(\int |\partial_t u|^2dx+\int |\nabla W|^2dx-\int |\nabla u|^2dx \right).
\end{multline}
Thus there exists $C_2>0$ such that
\begin{equation}
\label{minor.gR'}
g_R'(t)\leq -\frac{1}{N+2}\dd(t)+C_2\int_{|x|\geq \eps T} r(u).
\end{equation}
By Proposition \ref{propsub}, $t\lambda(t)\rightarrow +\infty$ and $|x(t)|/t\rightarrow 0$. Thus we may chose $t_1=t_1(\eps)$ such that $$t\geq t_1\Longrightarrow |x(t)|\leq \frac{\eps}{2} t\text{ and }|\lambda(t)|\geq \frac{2R_0(\eps)}{\eps t},$$
where $R_0(\eps)$ is defined in \eqref{defR_0} 
Then for $t_1\leq t\leq T$,
\begin{equation*}
|x|\geq \eps T\Longrightarrow \lambda(t)(|x|-|x(t)|)\geq \frac{2R_0(\eps)}{\eps T}\left(\eps T-\frac{\eps T}{2}\right)\geq R_0(\eps),
\end{equation*}
which yields, together with \eqref{defR_0} and \eqref{minor.gR'}, our expected estimate \eqref{boundg'R}.

\noindent\emph{Step 3. End of the proof.}

Integrating \eqref{boundg'R} between $t_1$ and $T$, one gets
$$ g_{\eps T}(T)-g_{\eps T}(t_1)=\int_{t_1}^T g_{\eps T}'(t) dt\leq -c \int_{t_1}^T\dd(t)dt+C_2 (T-t_1)\eps.$$
and thus, by \eqref{boundgR},
$$ \frac{c}{T} \int_{t_1}^T\dd(t)\leq 2C_1\eps +C_2\eps,$$
hence
$$ \limsup_{T\rightarrow +\infty} \frac{1}{T}\int_{0}^T\dd(t)dt\leq \frac{2C_1+C_2}{c}\eps$$
which yields the result.
\end{proof}

\subsection{Modulation of solutions.}
\label{sub.modul}
We will now precise, using modulation theory, the dynamics of solutions of \eqref{CP} near $W$. We will only suppose 
\begin{equation}
 \label{energy} E(u_0,u_1)=E(W,0),
\end{equation}
without any further assumption on the size of $\|\nabla u_0\|_2$.
We have the following development of the energy near $W$:
\begin{equation}
\label{W+f}
E(W+f,g)=E(W,0)+Q(f)+\frac 12\|g\|_{2}^2+O\big(\|\nabla f\|^3_{2}\big),\quad f\in \hdot, g\in L^2
\end{equation}
where $Q$ is the quadratic form on $\hdot$ defined by
\begin{equation}
\label{defQ}
Q(f):=\frac{1}{2}\int |\nabla f|^2-\frac{N+2}{2(N-2)} \int W^{\frac{4}{N-2}} f^2.
\end{equation}
%Indeed, by the equation $\Delta W+W^{p_c}=0$ the linear term in \eqref{W+g},
%$\re\left(\int \nabla W\nabla g-\int W^{p_c}g\right)$
%is always null.\par
Let us specify an important coercivity property of $Q$. Consider the orthogonal directions $W$, $\tW$, $W_{j}$, $j=1\ldots N$ in the real Hilbert space $\hdot=\hdot(\RR^N,\CC)$ where $\tW$ and $W_{j}$ are defined by
\begin{equation}
\label{tW.Wj}
\begin{split}
\tW=\tilde{c} \frac{\partial}{\partial\mu}\left( W_{\mu,X} \right)_{\restriction (\mu,X)=(1,0)}=-\tilde{c}\Big(\frac{N-2}{2}W+x\cdot \nabla W\Big)\\
W_{j}=c_j\frac{\partial}{\partial X_j}\left(W_{\mu,X}\right)_{\restriction (\mu,X)=(1,0)}=c_j\partial_{x_j} W.
\end{split}
\end{equation}
and the constants $\tilde{c}$, $c_1$, \ldots, $c_N$ are chosen so that 
\begin{equation}
\label{normalisation}
\|\nabla \tW\|_2=\|\nabla W_1\|_2=\ldots \|\nabla W_N\|_2=1.
\end{equation}
We have
\begin{equation}
\label{valeursQ}
Q(W)=-\frac{2}{(N-2)C_N^N}, \quad Q_{\restriction \vect\{\tW,W_1,\ldots,W_n\}}= 0,
\end{equation} 
where $C_N$ is the best Sobolev constant in dimension $N$.  The first assertion follows from direct computation and the fact that $\|W\|_{2^*}^{2^*}= \|\nabla W\|_2^2=\frac{1}{C_N^N}$. From \eqref{W+f}, \eqref{tW.Wj}, and the invariance of $E$ by all transformations $f\mapsto f_{\mu,X}$, we get that $Q(\tW)=Q(W_1)=\ldots=Q(W_N)=0$. Furthermore, it is easy to check that $\tW$, $W_1$, \ldots, $W_N$ are $Q$-orthogonal, which gives the second assertion.

 Let $H:=\vect\{W,\tW,W_1,\ldots,W_N\}$ and $H^{\bot}$ its orthogonal subspace in the real Hilbert space $\hdot$. The quadratic form $Q$ is nonpositive on $H$. By the following claim, $Q$ is positive definite on $H^{\bot}$ (see \cite[Appendix D]{Re90} for the proof).
\begin{claim}
\label{coercivity}
There is a constant $\tilde{c}>0$ such that for all radial function $\tf$ in $H^{\bot}$
$$ Q(\tf) \geq \tilde{c}\|\nabla \tf\|_{2}^2.$$
\end{claim}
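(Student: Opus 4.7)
The plan is to recognize that $2Q$ is the quadratic form of the linearized operator
\[
L := -\Delta - \frac{N+2}{N-2}\,W^{\frac{4}{N-2}}
\]
acting on $L^2(\RR^N)$, with form domain $\hdot$; the embedding $\hdot\hookrightarrow L^{2^*}$ and the pointwise bound $W^{4/(N-2)}\lesssim (1+|x|^2)^{-2}$ (together with Hardy's inequality) make $Q$ a bounded, symmetric form on $\hdot$. Thus $L$ is a self-adjoint Schr\"odinger operator, bounded below, whose essential spectrum on $L^2$ is $[0,+\infty)$ and whose discrete spectrum in $(-\infty,0]$ consists of finitely many eigenvalues of finite multiplicity. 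The whole claim reduces to identifying these low-lying eigenvalues.

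The kernel of $L$ is produced by the symmetries of the elliptic equation $-\Delta W = W^{(N+2)/(N-2)}$: differentiating in the scaling parameter $\mu$ at $\mu=1$ gives $L\tW=0$, and differentiating in translations gives $LW_j=0$ for $j=1,\dots,N$. Restricting to the radial sector kills the translation directions, so only $\tW$ remains as a radial zero mode. For the negative spectrum, the computation \eqref{valeursQ} shows $Q(W)<0$, so there is at least one negative eigenvalue. To rule out others in the radial sector, I would reduce $L$, via the substitution $f(x)=r^{-(N-1)/2}g(r)$, to a one-dimensional Schr\"odinger operator $-\partial_r^2+V(r)$ on the half-line, and apply Sturm--Liouville oscillation theory: $\tW$ vanishes at exactly one point of $(0,\infty)$ (an explicit computation from \eqref{tW.Wj}), so the $0$-eigenfunction in the radial sector is the first excited state, forcing exactly one simple negative eigenvalue $-e_0^2$ with radial ground state $\YYY$ (which is precisely the eigenfunction appearing in \eqref{as.dev}).

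Given this spectral picture, if $\tf$ is radial and lies in $H^{\bot}$, then $\tf\perp \tW$ and (because $\tf$ is radial while $W_j$ is odd in $x_j$) $\tf\perp W_j$ automatically; it remains to show that the $\hdot$-orthogonality $\langle\nabla\tf,\nabla W\rangle_{L^2}=0$ forces $\tf$ to be orthogonal, in the spectral sense for $L$, to $\YYY$. This follows because $W$ and $\YYY$ span the negative/zero part of $L$ on radial functions modulo $\tW$, and one checks that up to adding a multiple of $\tW$, testing against $-\Delta W$ is equivalent to testing against $\YYY$ (both being the unique negative-eigenvalue direction modulo the kernel). Thus $\tf$ belongs to the spectral subspace of $L$ on which $L\geq \delta>0$, yielding $\langle L\tf,\tf\rangle\geq \delta\|\tf\|_{L^2_w}^2$ for the natural weighted norm.

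The main obstacle is upgrading this lower bound to $Q(\tf)\geq \tilde c\,\|\nabla\tf\|_2^2$. I would argue by contradiction: take a sequence $\tf_n$ radial, in $H^{\bot}$, with $\|\nabla\tf_n\|_2=1$ and $Q(\tf_n)\to 0$. Extract a weak limit $\tf_\infty$ in $\hdot$; the compactness of multiplication by $W^{4/(N-2)}$ from $\hdot$ into its dual (the potential decays like $|x|^{-4}$ and we are in the radial sector, where Strauss-type compactness applies) shows $Q(\tf_n)\to \frac12\|\nabla\tf_\infty\|_2^2 + \frac12\|\nabla(\tf_n-\tf_\infty)\|_2^2 - o(1)$. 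Then $\tf_\infty\in H^{\bot}$ must satisfy $L\tf_\infty=0$ with $\nabla\tf_\infty=0$ (from the nonnegativity we established), forcing $\tf_\infty=0$, and hence $\|\nabla\tf_n\|_2\to 0$, contradicting the normalization. This concentration-compactness step is the delicate point; everything else is structural spectral theory.
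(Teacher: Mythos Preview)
The paper does not prove this claim; it defers to \cite[Appendix D]{Re90}. The argument there is based on the \emph{weighted} eigenvalue problem $-\Delta\phi=\mu\,W^{4/(N-2)}\phi$, whose first two eigenvalues are $\mu_1=1$ (eigenfunction $W$) and $\mu_2=\tfrac{N+2}{N-2}$ (eigenfunctions $\tW,W_1,\dots,W_N$). The key observation is that $\hdot$-orthogonality to $W,\tW,W_j$ is \emph{exactly} orthogonality to these functions in the weighted inner product $\int W^{4/(N-2)}fg$: indeed $\int\nabla\tf\cdot\nabla W=\int\tf\,W^{(N+2)/(N-2)}=\int W^{4/(N-2)}\tf\,W$, and since $L\tW=0$ one has $\int\nabla\tf\cdot\nabla\tW=\tfrac{N+2}{N-2}\int W^{4/(N-2)}\tf\,\tW$, similarly for $W_j$. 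Hence for $\tf\in H^\bot$ the min--max principle gives $\|\nabla\tf\|_2^2\ge\mu_3\int W^{4/(N-2)}\tf^2$, and $2Q(\tf)\ge(1-\mu_2/\mu_3)\|\nabla\tf\|_2^2$.

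Your approach has a genuine gap at the step where you pass from $\hdot$-orthogonality to spectral orthogonality for $L$. The constraints defining $H^\bot$ are $\hdot$-orthogonality conditions, not $L^2$-orthogonality: $\langle\nabla\tf,\nabla W\rangle_{L^2}=0$ says $\int\tf\,W^{(N+2)/(N-2)}=0$, which has nothing to do with $\int\tf\,\YYY=0$. Your assertion that ``testing against $-\Delta W$ is equivalent to testing against $\YYY$'' is incorrect; $W$ is not an eigenfunction of $L$ (one computes $LW=-\tfrac{4}{N-2}W^{(N+2)/(N-2)}$), and there is no mechanism that converts $\hdot$-orthogonality to $W$ into $L^2$-orthogonality to the ground state $\YYY$. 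Without that link, the $L^2$ spectral lower bound on the positive part of $L$ is never triggered for $\tf\in H^\bot$. Note also that in the paper the uniqueness of the negative eigenvalue of $L$ (Proposition~\ref{prop.spectral}, Step~3) is itself deduced \emph{from} Claim~\ref{coercivity}, so invoking that structure would be circular unless you supply the Sturm--Liouville argument in full.

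Your concluding compactness step is also off. If one already knew $Q\ge 0$ on radial $H^\bot$ and $Q(\tf_n)\to 0$ with $\|\nabla\tf_n\|_2=1$, the compactness of the potential term forces $\int W^{4/(N-2)}\tf_\infty^2=\tfrac{N-2}{N+2}$, hence $Q(\tf_\infty)\ge 0$ gives $\|\nabla\tf_\infty\|_2\ge 1$, so $\tf_n\to\tf_\infty$ \emph{strongly} with $\tf_\infty\ne 0$ and $Q(\tf_\infty)=0$; you then still have to rule this out by showing the zero set of $Q$ on $H^\bot$ is $\{0\}$, which is precisely the missing non-negativity/spectral input. The fix is to switch to the weighted eigenvalue problem above, where the orthogonality conditions match on the nose.
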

Now, let $u$ be a solution of \eqref{CP} satisfying \eqref{energy} and define $\dd(t)$ as in \eqref{defdd}. We would like to specify \eqref{CarW2}. We start by chosing $\mu$ and $X$.
\begin{claim}
\label{claim.ortho}
There exists $\delta_0>0$ such that for all solution $u$ of \eqref{CP} satisfying \eqref{energy}, and for all $t$ in the interval of existence of $u$ such that
\begin{equation}
\label{hyp.D}
\DD(t)\leq \delta_0,
\end{equation}
there exists $(\mu(t),\xx(t))\in (0,+\infty)\times \RR^N$ such that
$$ u_{\mu,\xx} \in \{\tW,W_1,\ldots,W_N\}^{\bot}.$$
\end{claim}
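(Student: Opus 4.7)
My plan is a standard modulation (implicit function theorem) argument, carried out in three steps.

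\emph{Step 1 (reduction to the case $u(t)$ close to $+W$ in $\hdot$).} By the variational estimate \eqref{CarW2}, if $\delta_0$ is chosen small enough then $\DD(t)\leq \delta_0$ forces the existence of $(\lambda_1(t),X_1(t))\in(0,\infty)\times\RR^N$ and $\sigma(t)\in\{\pm 1\}$ with $\|\nabla(u_{\lambda_1,X_1}(t)-\sigma W)\|_2$ as small as needed. The orthogonality condition $u_{\mu,X}\in\{\tW,W_1,\ldots,W_N\}^{\perp}\subset\hdot$ is unchanged by replacing $u$ with $-u$, so I may assume $\sigma=1$. Since the composition of two transformations $f\mapsto f_{\lambda,X}$ is itself of the same form, it suffices to prove the following local statement: for every $v\in\hdot$ sufficiently close to $W$ in $\hdot$, there exist $(\mu_2,X_2)$ near $(1,0)$ such that $v_{\mu_2,X_2}\in\{\tW,W_1,\ldots,W_N\}^{\perp}$.

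\emph{Step 2 (set-up of the IFT map).} Define
\[
\Phi:(0,\infty)\times\RR^N\times\hdot\to\RR^{N+1},\quad \Phi(\mu,X,v)=\bigl(\langle v_{\mu,X},\tW\rangle_{\hdot},\,\langle v_{\mu,X},W_1\rangle_{\hdot},\,\ldots,\,\langle v_{\mu,X},W_N\rangle_{\hdot}\bigr).
\]
Since the $\hdot$-norm of $W_{\mu,X}$ is independent of $(\mu,X)$, differentiating at $(1,0)$ gives $\langle W,\tW\rangle_{\hdot}=\langle W,W_j\rangle_{\hdot}=0$, so $\Phi(1,0,W)=0$.

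\emph{Step 3 (invertible Jacobian and conclusion).} Combining the identities $\partial_\mu W_{\mu,X}|_{(1,0)}=\tW/\tilde c$ and $\partial_{X_j}W_{\mu,X}|_{(1,0)}=W_j/c_j$ from \eqref{tW.Wj}, the mutual $\hdot$-orthogonality of $\tW,W_1,\ldots,W_N$, and the normalizations \eqref{normalisation}, a direct calculation at $(\mu,X,v)=(1,0,W)$ yields
\[
\partial_\mu\Phi_0=\tfrac{1}{\tilde c},\quad \partial_{X_k}\Phi_j=\tfrac{\delta_{jk}}{c_k},\quad \partial_\mu\Phi_j=0,\quad \partial_{X_k}\Phi_0=0.
\]
Thus the partial Jacobian $\partial_{(\mu,X)}\Phi(1,0,W)$ is diagonal and invertible. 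By the implicit function theorem there exist $\eps>0$ and $C^1$ maps $v\mapsto(\mu(v),X(v))$ defined on $\{v\in\hdot:\|\nabla(v-W)\|_2<\eps\}$ solving $\Phi(\mu(v),X(v),v)=0$. Shrinking $\delta_0$ so that $\eps_0(\delta_0)<\eps$ and applying this construction to $v=u_{\lambda_1(t),X_1(t)}(t)$, then composing the two successive scaling-translations into a single one, delivers the desired $(\mu(t),X(t))$.

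The only non-trivial point, which I view as the main (but mild) obstacle, is the verification that the Jacobian is non-degenerate; this boils down to the orthogonality and normalization properties of $\tW,W_1,\ldots,W_N$ already recorded in the excerpt.
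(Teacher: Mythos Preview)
Your proof is correct and matches the paper's approach exactly: the paper omits the proof of this claim, stating only that it ``follows from \eqref{CarW2} and the implicit function Theorem'' and referring to \cite[Claim 3.5]{DuMe07P} for a similar argument. Your three steps (reduce via \eqref{CarW2} to a neighborhood of $W$, set up the orthogonality map, and check the Jacobian is diagonal with nonzero entries via \eqref{tW.Wj} and \eqref{normalisation}) are precisely the standard argument the authors have in mind.
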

We omit the proof of Claim \ref{claim.ortho}, which follows from \eqref{CarW2} and the implicit function Theorem. We refer for example to \cite[Claim 3.5]{DuMe07P} for a similar proof.

If $a$ and $b$ are positive, we write $a\approx b$ when $C^{-1}a \leq b\leq Ca$ with a positive constant $C$ independent of all parameters of the problem.\par

Now, consider $u$ satisfying \eqref{energy} and assume that on an open subset $J$ of its interval of definition, $u$ also satisfies \eqref{hyp.D}.

According to the preceding claim, there exist $(\mu(t),\xx(t))$ such that 
$$\forall t\in J, \quad u_{\mu,\xx}(t) \in \{\tW,W_1,\ldots,W_N\}^{\bot}.$$
We will prove the following lemma, which is a consequence of Claim \ref{coercivity}, in Appendix \ref{Appendix.Decompo}.
\begin{lemma}[Estimates on the modulation parameters]
\label{lem.est.modul}
Let $u$, $\mu$, $\xx$ be as above. Changing $u$ into $-u$ if necessary, write
$$ u_{\mu,\xx}(t)=(1+\alpha(t))W+\tf(t) ,\quad 1+\alpha:=\frac{1}{\|\nabla W\|_2^2}\int \nabla W\cdot \nabla u_{\mu,\xx}dx,\quad \tf\in H^{\bot}.$$
Then
\begin{gather}
\label{est.modul.1}
|\alpha|\approx \|\nabla (\alpha W+\tf)\|\approx \|\nabla \tf\|_2+\|\partial_t u\|_2\approx \dd(t)\\
\label{est.modul.3}
\left|\frac{\alpha'}{\mu}\right|+\left|\frac{\mu'}{\mu^2}\right|+\left|\xx'(t)\right|\leq C\dd(t).
\end{gather}
\end{lemma}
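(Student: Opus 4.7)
The proof splits into the static estimate \eqref{est.modul.1}, arising from energy conservation combined with the coercivity of $Q$, and the dynamic bound \eqref{est.modul.3}, obtained by differentiating the orthogonality conditions in time.

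For \eqref{est.modul.1}, let $U(t) := u_{\mu(t),\xx(t)}(t) = (1+\alpha)W+\tf$ and let $V$ denote the matching rescaling of $\partial_t u$. Scale invariance of $E$ and of the $L^2$ norm give $E((1+\alpha)W+\tf,V) = E(W,0)$ and $\|V\|_2 = \|\partial_t u\|_2$. Expanding by \eqref{W+f} and exploiting the $Q$-orthogonality $W \perp_Q \tf$ (which follows from $\int\nabla W\cdot\nabla\tf = 0$ together with the elliptic identity $-\Delta W = W^{(N+2)/(N-2)}$, giving $\int W^{4/(N-2)}W\tf = \int\nabla W\cdot\nabla\tf = 0$) and the value $Q(W) = -\frac{2}{(N-2)C_N^N}$, I obtain
\begin{equation*}
\frac{2\alpha^2}{(N-2)C_N^N} \;=\; Q(\tf) + \tfrac12\|\partial_t u\|_2^2 + O\bigl((|\alpha|+\|\nabla\tf\|_2)^3\bigr).
\end{equation*}
The coercivity $Q(\tf) \approx \|\nabla\tf\|_2^2$ on $H^\bot$ (Claim \ref{coercivity}) lets me absorb the cubic error for $\delta_0$ small, yielding $\alpha^2 \approx \|\nabla\tf\|_2^2 + \|\partial_t u\|_2^2$. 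The remaining equivalences are then immediate: $\|\nabla(\alpha W+\tf)\|_2^2 = \alpha^2\|\nabla W\|_2^2+\|\nabla\tf\|_2^2 \approx \alpha^2$ by orthogonality, and the identity $\dd(t) = \bigl|(2\alpha+\alpha^2)\|\nabla W\|_2^2+\|\nabla\tf\|_2^2\bigr|+\|\partial_t u\|_2^2$ combined with $\|\nabla\tf\|_2^2 + \|\partial_t u\|_2^2 = O(\alpha^2) = o(|\alpha|)$ gives $\dd(t) \approx |\alpha|$.

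For \eqref{est.modul.3}, the chain rule supplies the kinematic identity
\begin{equation*}
\partial_t U \;=\; -\frac{\mu'}{\mu}\Lambda U + \mu\,\xx'\cdot\nabla_y U + V, \qquad \Lambda := \tfrac{N-2}{2} + y\cdot\nabla_y.
\end{equation*}
Differentiating the constraints $\langle U,\tW\rangle_{\hdot} = 0$ and $\langle U,W_j\rangle_{\hdot} = 0$ in $t$ and substituting this identity, together with $U = (1+\alpha)W+\tf$, $\Lambda W = -\tW/\tilde c$, $\partial_{y_j}W = W_j/c_j$, and the mutual $\hdot$-orthogonality of $W,\tW,W_1,\ldots,W_N$, the projections onto $W,\tW,W_j$ produce a linear system in $(\alpha',\mu'/\mu,\mu\xx_j')$. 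Its leading matrix is diagonal with entries of order $1$ (by the normalization $\|\nabla\tW\|_2 = \|\nabla W_j\|_2 = 1$), while the off-diagonal and correction entries are $O(\|\nabla\tf\|_2) = O(\dd)$. The inhomogeneity comes from $\langle V,\phi\rangle_{\hdot} = -\int V\,\Delta\phi$ for $\phi\in\{W,\tW,W_j\}$, and is controlled by $\|\partial_t u\|_2 \lesssim \dd$ since $\Delta W,\Delta\tW,\Delta W_j\in L^2$ (obtained by differentiating the elliptic equation for $W$ and using its decay). For $\delta_0$ small the system is invertible with inverse $\mathrm{Id}+O(\dd)$, yielding the three bounds in \eqref{est.modul.3}.

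The main difficulty is the modulation bookkeeping: tracking factors of $\mu$ arising from the change of variables carefully enough to produce the exact scalings $\alpha'/\mu$, $\mu'/\mu^2$, $\xx'$ in \eqref{est.modul.3}, and establishing uniform invertibility of the matrix along the trajectory. A subsidiary technical point is to confirm $\Delta\tW,\Delta W_j\in L^2$, which follows routinely from $-\Delta W = W^{(N+2)/(N-2)}$ and the decay $W(x) \sim |x|^{-(N-2)}$ at infinity.
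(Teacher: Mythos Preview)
Your proof is correct and follows essentially the same route as the paper: expand the energy at $(1+\alpha)W+\tf$, use the $Q$-orthogonality of $W$ and $\tf$ together with the coercivity of $Q$ on $H^\bot$ to get \eqref{est.modul.1}; then write down the kinematic identity for the rescaled solution, project onto $W,\tW,W_1,\ldots,W_N$ (equivalently, pair with $\Delta W,\Delta\tW,\Delta W_j$), exploit $\partial_t\tf\in H^\bot$, and invert the resulting nearly diagonal system. The only slip is a notational one: with your convention $\|V\|_2=\|\partial_t u\|_2$ (i.e.\ $V$ is the $L^2$-scaled $\partial_t u$, the paper's $w$), the kinematic identity reads $\partial_t U=-\tfrac{\mu'}{\mu}\Lambda U+\mu\,\xx'\cdot\nabla_y U+\mu V$ rather than $+V$; this is exactly the $\mu$-bookkeeping you flag, and once tracked it produces the scalings $\alpha'/\mu$, $\mu'/\mu^2$, $\xx'$ as in the paper.
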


%Differentiating the second equality, we get
%$$ \partial_t^2 u=\frac{N}{2} \frac{\lambda'}{\lambda}\partial_t u-i\theta'\partial_t u+\lambda^{\frac{N}{2}} e^{-i\theta} (\partial_t w)\big(t,\lambda(x-x(t))\big)+\lambda^{\frac N2}  (\lambda' x-\lambda x')\cdot e^{-i\theta} \nabla_y w\big(t,\lambda(x-x(t))\big).$$
%Furthermore
%\begin{gather*}
%\Delta u=\lambda^{\frac{N}{2}+1} e^{-i\theta} (\Delta v)\big(t,\lambda(x-x(t))\big)\\
%|u|^{p_c-1} u=\lambda^{(\frac N2-1)p_c}e^{-i\theta}|v|^{p_c}\big(t,\lambda(x-x(t))\big).
%\end{gather*}
%Note that $\big(\frac N2 -1\big)p_c=\frac N2+1$.

%In the sequel we will be interested in a solution $u$ of \eqref{CP} such that
%\begin{gather}
%\label{hyp.sub}
%E(u_0,u_1)=E(W,0),\quad \|\nabla u_0\|_2^2<\|\nabla W\|_2^2\\
%\label{hyp.noscatter}
%\|u\|_{S(0,+\infty)}=+\infty
%\end{gather}
\subsection{Exponential convergence to $W$.}
\label{sub.CVseq}
Using Subsections \ref{sub.seq} and \ref{sub.modul}, we are now ready to prove Proposition \ref{prop.CVexpu}

Let $u$ be as in the proposition. By Lemma \ref{lem.compact}, we may assume that 

\begin{equation}
\label{hyp.compact}
\begin{split}
\text{There exist functions } \lambda(t),\,x(t)\text{ continuous on }[0,+\infty)\text{ such that }\\
K:=\left\{ \big(u(t),\partial_t u(t)\big)_{\lambda(t),x(t)},\; t\in [0,+\infty)\right\}
\text{ is relatively compact in }\hdot\times L^2.
\end{split}
\end{equation}
Let $\mu(t)$ and $\xx(t)$ be the modulation parameters of Subsection \ref{sub.modul}, defined for $\DD(t)\leq \delta_0$. It is easy to see that the compactness of $K$ 
implies that the set 
$$K_1:=\left\{ \big(u(t),\partial_t u(t)\big)_{[\mu(t),\xx(t)]},\; t\in [0,+\infty),\;\dd(t)<\delta_0\right\}$$
has compact closure in $\hdot\times L^2$. By an elementary construction, one can find continuous functions $\tlbda(t)$ and $\tx(t)$ of $t\in [0,+\infty)$ such that $\big(\tlbda(t),\tx(t)\big)=(\xx(t),\mu(t))$ if $\DD(t)\leq \delta_0$. The set $\widetilde{K}$ defined as in \eqref{hyp.compact} has compact closure in $\hdot\times L^2$. We will still denote by $x(t)$ and $\lambda(t)$ the new parameters that satisfy, in addition to \eqref{hyp.compact},
\begin{equation}
\label{hyp.equal.modul}
\dd(t)<\delta_0\Longrightarrow \lambda(t)=\mu(t),\quad x(t)=\xx(t).
\end{equation}

The proof of Proposition \ref{prop.CVexpu} relies on the two following Lemmas.
\begin{lemma}[Virial type estimates on $\DD(t)$]
\label{lem.virial}
Let $u$ be a solution of \eqref{CP} satisfying \eqref{hyp.sub}, \eqref{hyp.compact}, and \eqref{hyp.equal.modul}.
Then there is a constant $C>0$ such that
$$ 0\leq \sigma<\tau \Longrightarrow \int_{\sigma}^{\tau} \DD(t)dt\leq C\left(\sup_{\sigma\leq t\leq\tau} |x(t)|+\frac{1}{\lambda(t)}\right)\big(\DD(\sigma)+\DD(\tau)\big).$$
\end{lemma}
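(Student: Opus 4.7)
My plan is to adapt the localized virial argument from the proof of Lemma \ref{lem.dd.0} to the fixed interval $[\sigma,\tau]$, choosing the truncation radius $R$ proportional to $\sup_{\sigma\leq s\leq \tau}(|x(s)|+1/\lambda(s))$ rather than to the length of the time interval. This matching is what will allow the boundary terms of the virial identity to produce exactly the right-hand side $R(\DD(\sigma)+\DD(\tau))$ of the claim.

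Taking the functional $g_R(t)$ from \eqref{defgR}, I would repeat the differential computation from Step 2 of the proof of Lemma \ref{lem.dd.0}. Using the energy identity $E(u_0,u_1)=E(W,0)$ together with \eqref{bound.important} and the vanishing of momentum $\int u_1\nabla u_0=0$ from \eqref{moment.sub}, this yields
\[
g'_R(t)\leq -c\,\DD(t)+C\int_{|x|\geq R}r(u)(t,x)\,dx.
\]
For the choice $R:=A\sup_{\sigma\leq s\leq \tau}(|x(s)|+1/\lambda(s))$ with $A$ large and $t\in[\sigma,\tau]$, the inequality $|x|\geq R$ forces $\lambda(t)|x-x(t)|\geq A$, so the tail integral is bounded by a quantity $\eta(A)\to 0$, by the compactness of $K$ from \eqref{hyp.compact}. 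For the boundary terms, Cauchy--Schwarz combined with Hardy's inequality and the uniform bound on $\|\nabla u\|_2$ from Claim \ref{usefulclaim} give $|g_R(t)|\leq CR\,\|\partial_t u(t)\|_2$; invoking the modulation identity \eqref{est.modul.1} (applicable thanks to \eqref{hyp.equal.modul}) then upgrades this at $t=\sigma,\tau$ to the sharper endpoint bound $|g_R(t)|\leq CR\,\DD(t)$.

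Integrating the virial inequality over $[\sigma,\tau]$ produces
\[
c\int_\sigma^\tau \DD(t)\,dt\leq CR\bigl(\DD(\sigma)+\DD(\tau)\bigr)+C\int_\sigma^\tau\int_{|x|\geq R}r(u)(t,x)\,dx\,dt.
\]
\textbf{The main obstacle} is the last, time-integrated tail term: a crude use of compactness only yields $C\eta(A)(\tau-\sigma)$, which need not be absorbable when $\tau-\sigma$ is large. To overcome this I would sharpen the pointwise-in-$t$ tail estimate by passing to the modulated frame and writing $u_{\mu,\xx}=(1+\alpha)W+\tf$: since $|\alpha|+\|\nabla \tf\|_2=O(\DD(t))$ by \eqref{est.modul.1} and $W$ decays as $|y|^{-(N-2)}$, one obtains a splitting of the tail into a $W$-contribution of size $O(A^{-(N-2)})$ and an $O(\DD(t))$ perturbation. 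For $A$ large, the first piece, multiplied by $R$, is dominated by the right-hand side $R(\DD(\sigma)+\DD(\tau))$ (after absorbing the weakest part into the boundary terms or restricting to the regime where $\DD$ is small enough to close a bootstrap), while the $O(\DD(t))$ piece is absorbed into the coercive $-c\,\DD(t)$ on the left, yielding the claim.
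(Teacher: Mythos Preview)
Your overall architecture is exactly the paper's: localize the virial via $g_R$ with $R$ a fixed large multiple of $\sup_{[\sigma,\tau]}\bigl(|x(t)|+1/\lambda(t)\bigr)$, bound the endpoint terms by $CR\,\DD$ (your estimate $|g_R(t)|\leq CR\,\DD(t)$ is the paper's Step~1), and aim for a pointwise inequality $g_R'(t)\leq -c\,\DD(t)$ so that a single integration concludes.

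The genuine gap is in your treatment of the tail. You pass from $A_R(u,\partial_t u)$ to the crude upper bound $C\int_{|x|\geq R}r(u)$ and then decompose the latter via modulation. But $\int_{|x|\geq R}r(W_{1/\mu,-\xx})$ is of order $A^{-(N-2)}$ and does \emph{not} vanish; after time-integration you are left with a term of size $(\tau-\sigma)A^{-(N-2)}$, and your parenthetical about ``absorbing the weakest part into the boundary terms or restricting to a bootstrap regime'' does not dispose of it. Since $\tau-\sigma$ can be arbitrarily large while $\DD(\sigma)+\DD(\tau)$ is arbitrarily small, no fixed choice of $A$ makes $(\tau-\sigma)A^{-(N-2)}\lesssim R\bigl(\DD(\sigma)+\DD(\tau)\bigr)$ uniformly.

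The paper's repair is to exploit the structure of $A_R$ rather than the crude bound by $\int r(u)$. Because $W_{\lambda_0,x_0}$ is a \emph{stationary} solution, the associated $g_R$ is constant in $t$, and by Claim~\ref{calculs} this forces $A_R(W_{\lambda_0,x_0},0)=0$ \emph{exactly}. Writing $A_R(u,\partial_t u)=A_R(u,\partial_t u)-A_R(W_{1/\mu,-\xx},0)$ and expanding $u_{\mu,\xx}=W+f$ with $\|\nabla f\|_2+\|\partial_t u\|_2\lesssim \DD(t)$, the pure-$W$ contribution cancels identically; the cross terms are $O\bigl(\rho^{-(N-2)/2}\,\DD(t)\bigr)$ (Cauchy--Schwarz against the tail of $\nabla W$) and the quadratic terms are $O(\DD(t)^2)$, both absorbable into $-\tilde c\,\DD(t)$ once $\rho$ is large and $\DD(t)\leq\delta_1$. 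For the remaining times where $\DD(t)\geq\delta_1$ no modulation is available, but none is needed: compactness alone gives $|A_R(u,\partial_t u)|\leq\eps$ for $R$ large, and choosing $\eps=\tfrac{\tilde c}{2}\delta_1\leq\tfrac{\tilde c}{2}\DD(t)$ again absorbs. The upshot is the clean pointwise bound $g_R'(t)\leq -c\,\DD(t)$ on all of $[\sigma,\tau]$, with no residual time-integrated remainder.
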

\begin{remark}
\label{remvirial}
We will also need the following variant of Lemma \ref{lem.virial}, whose proof is exactly the same: if $u$ satisfies the assumptions of Lemma \ref{lem.virial} for all $t\in \RR$, then there is a constant $C>0$
$$ -\infty<\sigma<\tau<+\infty\Longrightarrow\int_{\sigma}^{\tau} \dd(t)dt\leq C\left(\sup_{\sigma\leq t\leq\tau} |x(t)|+\frac{1}{\lambda(t)}\right)\big(\dd(\sigma)+\dd(\tau)\big).$$
\end{remark}
\begin{lemma}[Parameters control]
\label{lem.bound.x}
Let $u$ be a solution of \eqref{CP} fullfilling the assumptions of Lemma \ref{lem.virial}. Then there exists a constant $C>0$ such that
$$  0\leq \sigma\text{ and } \sigma+\frac{1}{\lambda(\sigma)}\leq \tau\Longrightarrow |x(\tau)-x(\sigma)|+\left|\frac{1}{\lambda(\tau)}-\frac{1}{\lambda(\sigma)}\right|\leq C\int_{\sigma}^{\tau} \dd(t)dt.$$
\end{lemma}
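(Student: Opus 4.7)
My plan is to integrate the parameter derivative bounds of Lemma \ref{lem.est.modul}. By hypothesis \eqref{hyp.equal.modul}, $(\lambda(t), x(t))$ coincides with the modulation pair $(\mu(t), \xx(t))$ whenever $\dd(t) < \delta_0$, so on that set the estimate \eqref{est.modul.3} reads
\[
|x'(t)| + \Bigl|\frac{d}{dt}\frac{1}{\lambda(t)}\Bigr| \leq C\,\dd(t).
\]

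If $\dd(t) < \delta_0$ on all of $[\sigma, \tau]$, direct integration gives the conclusion. In the general case, I would split $[\sigma, \tau] = O \sqcup F$ with $O := \{t : \dd(t) < \delta_0\}$ open and $F := [\sigma, \tau] \setminus O$ closed. Writing $O$ as a countable disjoint union of open intervals and summing the bound above on each, the total variation of $x$ and of $1/\lambda$ over $O$ is at most $C \int_\sigma^\tau \dd(t)\,dt$. On $F$, $\dd \geq \delta_0$ yields $|F| \leq \delta_0^{-1} \int_\sigma^\tau \dd(t)\,dt$, so the problem reduces to establishing an effective Lipschitz bound $|x'(t)| + |(1/\lambda(t))'| \leq C$ on $F$ with constant depending only on $K$.

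For this I would use a scaling argument: for $t_0 \in F$, the rescaled solution
\[
v(s, y) := \lambda(t_0)^{-(N-2)/2}\, u\bigl(t_0 + s/\lambda(t_0),\, x(t_0) + y/\lambda(t_0)\bigr)
\]
solves \eqref{CP} with $(v(0), \partial_s v(0)) \in K$, and the rescaled parameters $\tilde\lambda(s) := \lambda(t_0 + s/\lambda(t_0))/\lambda(t_0)$, $\tilde x(s) := \lambda(t_0)(x(t_0 + s/\lambda(t_0)) - x(t_0))$ satisfy $\tilde\lambda(0) = 1$, $\tilde x(0) = 0$, with $(v(s), \partial_s v(s))_{\tilde\lambda(s), \tilde x(s)} \in K$. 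The continuity property \eqref{continuity} of the Cauchy problem, applied to initial data in the compact $K$, provides a modulus of continuity for $s \mapsto (v(s), \partial_s v(s)) \in \hdot \times L^2$ on $[0, 1]$ depending only on $K$; the hypothesis $\tau \geq \sigma + 1/\lambda(\sigma)$ ensures that this unit of rescaled time is available. One would then deduce a uniform bound on $|\tilde x(s)|$ and on $\tilde\lambda(s)$ (above and below) on $[0, 1]$, and undoing the rescaling converts this into $|x(t) - x(t_0)| + |1/\lambda(t) - 1/\lambda(t_0)| \leq C/\lambda(t_0)$ for $|t - t_0| \leq 1/\lambda(t_0)$, i.e.\ an effective Lipschitz constant on $F$.

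The main obstacle is the passage from continuity of the flow $s \mapsto (v(s), \partial_s v(s))$ in $\hdot \times L^2$ to uniform control of the scaling parameters $(\tilde\lambda, \tilde x)$. Elements of $K$ corresponding to times in $F$ satisfy $\dd \geq \delta_0$, hence lie in a compact subset of $K$ bounded away from the orbit of $\pm W$; on this subset the scaling action is locally injective with uniform constants, which is what is required to convert flow continuity into parameter continuity. I would formalize this by a contradiction/compactness argument: if no uniform bound on $(\tilde\lambda, \tilde x)$ existed, one could extract from the sequence of rescaled trajectories a limit in $K$ at which the scaling action degenerates, contradicting the local rigidity of the scaling action at a non-$W$ element of $K$.
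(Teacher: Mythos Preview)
Your overall strategy is right: use the modulation estimate \eqref{est.modul.3} where $\dd<\delta_0$, and a compactness/rescaling argument where $\dd\geq\delta_0$. The rescaling argument you sketch is exactly the paper's Step~1, and it does yield
\[
|x(t)-x(t_0)|+\Big|\frac{1}{\lambda(t)}-\frac{1}{\lambda(t_0)}\Big|\leq \frac{C}{\lambda(t_0)}\qquad\text{for }|t-t_0|\leq\frac{1}{\lambda(t_0)}.
\]
The gap is in how you use this. The bound above is an \emph{oscillation} bound at scale $1/\lambda(t_0)$, not the pointwise Lipschitz bound $|x'(t)|+|(1/\lambda)'(t)|\leq C$ you claim. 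On $F=\{\dd\geq\delta_0\}$ the parameters $x,\lambda$ are merely continuous (they are not the modulation parameters, and no derivative is available), so you cannot integrate a pointwise bound over $F$. Worse, your $O/F$ decomposition allows $F$ to have many components of length $\ll 1/\lambda$; on each such component the oscillation bound only gives a jump of size $C/\lambda$, and the sum of these jumps is controlled by the \emph{number} of components, not by $|F|$. Your contradiction argument at the end secures the oscillation bound but cannot upgrade it to a Lipschitz bound, since nothing prevents the (noncanonical) choice of $x(t),\lambda(t)$ from oscillating rapidly on small scales within $F$.

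The paper closes this gap with an extra ingredient (its Step~2): a second compactness argument showing that on any interval of length $\sim 1/\lambda(\tau)$, if $\dd$ ever exceeds $\delta_0$ then $\dd>\delta_1$ throughout that interval. This rigidity of $\dd$ rules out the fine $O/F$ interlacing. The paper then partitions $[\sigma,\tau]$ into consecutive subintervals of length $1/(C_1\lambda(\sigma_j))$ and argues by dichotomy on each: either $\dd<\delta_0$ throughout and \eqref{est.modul.3} gives $|x(\sigma_{j+1})-x(\sigma_j)|\leq C\int_{\sigma_j}^{\sigma_{j+1}}\dd$, or $\dd>\delta_1$ throughout and the oscillation bound gives $|x(\sigma_{j+1})-x(\sigma_j)|\leq C/\lambda(\sigma_j)\approx C|\sigma_{j+1}-\sigma_j|\leq (C/\delta_1)\int_{\sigma_j}^{\sigma_{j+1}}\dd$. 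Summing yields the lemma. The point is that the oscillation bound is only as good as a Lipschitz bound when applied on an interval of its natural length $\sim 1/\lambda$, and Step~2 guarantees that on such intervals the two regimes do not mix.
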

\begin{remark}
The technical assumption $\sigma+\frac{1}{\lambda(\sigma)}<\tau$ is needed because of the infinite choice of parameters $x(t)$ and $\lambda(t)$ when $\DD(t)>\delta_0$.
\end{remark}
Let us first assume Lemma \ref{lem.virial} and \ref{lem.bound.x} to show Proposition \ref{prop.CVexpu}. 
\begin{proof}[Proof of Proposition \ref{prop.CVexpu}]
\noindent\emph{Step 1.}
Let us show that, (replacing $u$ by $u(\cdot-x_{\infty})$ for some $x_{\infty}\in\RR^N$ if necessary), there exist $c,C>0$ and  $\lambda_{\infty}\in (0,\infty)$
\begin{equation}
\label{expparam}
\int_t^{\infty}\DD(s)ds+ |\lambda(t)-\lambda_{\infty}|+|x(t)| \leq Ce^{-ct}.
\end{equation}

We first show that $x(t)$ and $\frac{1}{\lambda(t)}$ are bounded.
According to Lemmas \ref{lem.virial} and \ref{lem.bound.x}, there exists a constant $C_0>0$ such that for all $0\leq \sigma\leq s<t\leq\tau$ with $s+\frac{1}{\lambda(s)}<t$, we have
\begin{equation}
\label{ineg.crucial}
|x(s)-x(t)|+\left|\frac{1}{\lambda(s)}-\frac{1}{\lambda(t)}\right|\leq C_0 \left\{\sup_{\sigma\leq r \leq \tau} \Big(|x(r)|+\frac{1}{\lambda(r)}\Big)\right\}(\dd(\sigma)+\dd(\tau)).
\end{equation}
Now consider the increasing sequence $\tau_n\rightarrow +\infty$, given by Corollary \ref{corol.subseq}, and chose $n_0$ such that 
\begin{equation}
\label{bounddd}
n\geq n_0\Rightarrow\dd(\tau_{n})\leq \frac{1}{4C_0}
\end{equation}
 %For $n>n_0$, chose $t_n\in [\tau_{n_0},\tau_{n}]$ such that 
%\begin{equation}
%\label{deftn}
%\frac 12 \sup_{\tau_{n_0}\leq r \leq \tau_n} \Big(|x(r)|+\frac{1}{\lambda(r)}\Big)\leq |x(t_n)|+\frac{1}{\lambda(t_n)}.
%\end{equation}
%If $t_n$ is bounded, we get immediately that $|x(t_n)|+\frac{1}{\lambda(t_n)}$ is  bounded (as $x(t)$ and $\frac{1}{\lambda(t)}$ are continuous). Let us assume (extracting a subsequence if necessary) that $t_n\rightarrow +\infty$, so that for large $n$, $t_n\geq \tau_{n_0}+\frac{1}{\lambda(\tau_{n_0})}$
Using \eqref{ineg.crucial} with $\sigma=s=\tau_{n_0}$, and $\tau=\tau_n$ for some large $n$ we get, in view of \eqref{bounddd}
$$ \tau_{n_0}+\frac{1}{\lambda(\tau_{n_0})}<t \Longrightarrow |x(\tau_{n_0})-x(t)|+\left|\frac{1}{\lambda(\tau_{n_0})}-\frac{1}{\lambda(t)}\right|\leq \frac{1}{2} \left\{\sup_{\tau_{n_0}\leq r} \Big(|x(r)|+\frac{1}{\lambda(r)}\Big)\right\}.$$
Thus
$$ \sup_{\tau_{n_0}+\frac{1}{\lambda(\tau_{n_0})}\leq t} \Big(|x(t)|+\frac{1}{\lambda(t)}\Big)\leq \frac{1}{2} \left\{\sup_{\tau_{n_0}\leq t} \Big(|x(t)|+\frac{1}{\lambda(t)}\Big)\right\}+|x(\tau_{n_0})|+\frac{1}{\lambda(\tau_{n_0})},$$
which shows the boundedness of $x$ and $\lambda$.

By Lemma \ref{lem.virial} between $\sigma=t$ and $\tau=\tau_n$, and taking into account the fact that $x(t)$ and $\frac{1}{\lambda(t)}$ are bounded, we get $\int_t^{\tau_n} \dd(s)ds\leq C(\dd(t)+\dd(\tau_n))$. Letting $n$ goes to infinity we obtain $
\int_{t}^{+\infty} \dd(s)ds \leq C\dd(t).$ Thus for some constants $c,C>0$,
\begin{equation}
\label{expdd}
\int_{t}^{+\infty} \dd(s)ds \leq Ce^{-c t},
\end{equation} 
which is the first bound in \eqref{expparam}.

By \eqref{expdd}, Lemma \ref{lem.bound.x} and the fact that $x(t)$ and $\frac{1}{\lambda(t)}$ are bounded, we have, if $\sigma+\frac{1}{\lambda(\sigma)}<\tau$, $|x(\sigma)-x(\tau)|+\big|\frac{1}{\lambda(\sigma)}-\frac{1}{\lambda(\tau)}\big|\leq Ce^{-c\sigma}$.  Thus there exist $x_{\infty}\in \RR^N$, $\ell_{\infty}\in [0,+\infty)$ such that 
$$ |x(t)-x_{\infty}|+\left|\frac{1}{\lambda(t)}-\ell\right|\leq Ce^{-ct}.$$
Translating $u$, we will assume $x_{\infty}=0$. It remains to show that $\ell_{\infty}>0$. Assume that $\ell_{\infty}=0$.
Let $0\leq \sigma\leq s$. Let $\tau_n$ be the sequence such that $\dd(\tau_n)\rightarrow 0$. Then, by \eqref{ineg.crucial}, if $n$ is large enough (so that $s+\frac{1}{\lambda(s)}<\tau_n$),
\begin{equation*}
|x(s)-x(\tau_n)|+\left|\frac{1}{\lambda(s)}-\frac{1}{\lambda(\tau_n)}\right|\leq C_0 \left[\sup_{\sigma\leq r \leq \tau_n} \Big(|x(r)|+\frac{1}{\lambda(r)}\Big)\right](\dd(s)+\dd(\tau_n)).
\end{equation*}
Letting $n$ tends to infinity, we get, by the assumptions $\ell_{\infty}=0$, and $x_{\infty}=0$
\begin{equation*}
0\leq\sigma\leq s\Longrightarrow |x(s)|+\frac{1}{\lambda(s)}\leq C_0 \left[\sup_{\sigma\leq r} \left(|x(r)|+\frac{1}{\lambda(r)}\right)\right]\dd(\sigma).
\end{equation*}
Taking the supremum in $s$ in the preceding inequality, we get, if $\sigma=\tau_n$
\begin{equation*}
\sup_{\tau_n\leq s} |x(s)|+\frac{1}{\lambda(s)}\leq C_0 \left[\sup_{\tau_n\leq s} \left(|x(s)|+\frac{1}{\lambda(s)}\right)\right]\dd(\tau_n).
\end{equation*}
Recalling that $d(\tau_n)$ tends to $0$, we get a contradiction, showing that $\ell_{\infty}>0$. The proof of \eqref{expparam} is now complete.

\medskip

\noindent\emph{Step 2. Proof of \eqref{CVexpu}.} Let us first show by contradiction
\begin{equation}
\label{limite0}
\lim_{t\rightarrow +\infty} \dd(t)=0.
\end{equation}
Indeed, if it does not hold, there exists a subsequence of $(\tau_n)_n$ (that we still denote by $(\tau_n)_n$), and a sequence $(\ttau_n)_n$ such that 
$$\tau_n<\ttau_n, \quad \forall t\in [\tau_n,\ttau_n),\;\dd(t)<\delta_0/2, \quad \dd(\ttau_n)=\delta_0/2.$$
On $[\tau_n,\ttau_n]$ the parameters $\alpha(t)$, $\mu(t)$ and $\xx(t)$ of Lemma \ref{lem.est.modul} are well-defined. 
By \eqref{expparam} and Lemma \ref{lem.est.modul}.
\begin{equation}
\label{major.alpha'}
|\alpha(\tau_n)-\alpha(\ttau_n)|\leq \int_{\tau_n}^{\ttau_n} \left|\frac{\alpha'(t)}{\mu(t)}\right|dt \leq C\int_{\tau_n}^{\ttau_n}\dd(t)dt \leq Ce^{-\tau_n}.
\end{equation}
By Lemma \ref{lem.est.modul}, $\alpha(t)\approx \dd(t)$. As $\DD(\tau_n)\rightarrow 0$ and $\DD(\ttau_n)=\delta_0/2$, this contradicts \eqref{major.alpha'}, showing \eqref{limite0}.\par

In view of \eqref{limite0}, there exists $T>0$ such that for $t\geq T$, $\dd(t)<\delta_0$, so that $u$ is close to $\pm W$ for $t\geq T$. By continuity of $u$, the sign before $W$ does not change for large $t$. Changing $u$ into $-u$ if necessary, we can make it a $+$. Write as in Lemma \ref{lem.est.modul}
\begin{equation}
\label{umuxx}
u_{\mu,\xx}(t)=(1+\alpha(t))W+\tf(t).
\end{equation}
Integrating the estimate $|\alpha'(t)|\leq C\mu(t)\dd(t)$ of Lemma \ref{lem.est.modul}, we get, by \eqref{expparam}, $|\alpha(t)|\leq Ce^{-ct}$. Furthermore, again by Lemma \ref{lem.est.modul}, $\|\nabla \tf\|_2+\|\partial_t u\|_2\lesssim \dd(t)\approx |\alpha(t)|$. Thus
\begin{equation}
\label{final.expo}
\forall t\geq T, \quad |\alpha(t)|+|\mu(t)-\lambda_{\infty}|+|\xx(t)|+\|\nabla \tf\|_2+\|\partial_t u(t)\|_2\leq Ce^{-ct}.
\end{equation}
This implies \eqref{CVexpu} in view of \eqref{umuxx}.

\medskip

\noindent\emph{Step 3. Proof of \eqref{scattering-}.}
Assume, in addition to the assumption of Proposition \ref{prop.CVexpu}, that we have
\begin{equation}
\label{noscatter-}
\|u\|_{S(-\infty,0)}=+\infty.
\end{equation}
By Lemma \ref{lem.compact} there exist $\lambda(t)$ and $x(t)$, defined for $t\in\RR$ such that 
$$K:=\left\{ \big(u(t),\partial_t u(t)\big)_{\lambda(t),x(t)},\; t\in \RR\right\}$$
has compact closure in $\hdot\times L^2$. As a consequence of the preceding steps, applied to $u(t,x)$ and $u(-t,x)$, we get that $\dd(t)$  tends to $0$ as $t$ goes to $+\infty$ and $-\infty$,
and that $\frac{1}{\lambda(t)}$ and $x(t)$ are bounded independently of $t\in\RR$. By Remark \ref{remvirial},
\begin{equation}
\label{virialonR}
\sigma<\tau\Rightarrow \int_{\sigma}^{\tau} \dd(t)dt\leq C\left(\dd(\sigma)+\dd(\tau)\right).
\end{equation}
Letting $\sigma$ go to $-\infty$ and $\tau$ to $+\infty$, we get that $\dd(t)=0$ for all $t$. Thus $u=W$ up to the invariance of the equation, which contradicts the assumption $\|\nabla u_0\|_2<\|\nabla W\|_2$.
\end{proof}

\begin{proof}[Proof of Lemma \ref{lem.virial}]
Let $R>0$ to be chosen later and $g_{R}$ the function defined by \eqref{defgR}  
%$$g_R(t):=\int \psi_R.\nabla_x u \partial_t u dx+(\frac{N-1}{2}) \int \varphi_R u \partial_t u dx.$$

\noindent\emph{Step 1. Bound on $g_{R}$.} Let us show that there is a constant $C_0$ independent of $t\geq 0$ such that 
\begin{equation}
\label{boundgRbis}
|g_{R}(t)|\leq C_0 R\dd(t).
\end{equation}

Indeed by the explicit expression of $g_{R}$, the fact that $\psi_{R}\leq 2R$ and $\varphi_{R}\leq 2R/|x|$ and  Hardy's inequality we get 
$$ |g_{R}(t)|\leq C R \|\partial_t u(t)\|_{2}\|\nabla u(t)\|_2\leq CR\|\partial_tu(t)\|_2.$$
By Lemma \ref{lem.est.modul} $
\|\partial_t u\|_2\leq C \dd(t)$
for $t$ such that $\dd(t)\leq \delta_0$. As $\|\partial_t u\|_2$ is bounded by $\sqrt{2E(W)}$ (Claim \ref{usefulclaim}), this bounds is valid for any $t$, which concludes the proof of \eqref{boundgRbis}.

\noindent\emph{Step 2. Bound on $g_{R}'$.} In this step we show that there exist $\rho_0>0$, $c>0$, independent of $\sigma$ and $\tau$ such that if for some $t\in[\sigma, \tau]$,
\begin{equation}
\label{hyp.gR'}
R\geq \rho_0\left(\frac{1}{\lambda(t)}+|x(t)|\right),
\end{equation}
then
\begin{equation}
\label{boundgR'}
g_{R}'(t)\leq -c\dd(t).
\end{equation}
Indeed by Claim \ref{calculs} in the appendix,
$$
g_R'(t)=\frac{1}{N-2}\int |\partial_t u|^2dx-\frac{1}{N-2}\left(\int |\nabla W|^2dx-\int |\nabla u|^2dx\right)+A_{R}(u,\partial_t u),$$
where $A_R$ is defined  in \eqref{defAR}.
We first claim the following bounds on $A_{R}(u,\partial_t u)$:
\begin{gather}
\label{bound.larged}
\forall \eps>0, \, \exists\rho_{\eps}>0,\,\forall t\geq 0,\quad R\geq 2|x(t)|+\frac{\rho_{\eps}}{\lambda(t)}\Longrightarrow |A_{R}(u,\partial_t u)|\leq \eps.\\
\label{bound.smalld}
\exists C_1>0,\;\forall \rho\geq 1,\; \forall t\geq 0,\quad \left[R\geq 2|x(t)|+\frac{2\rho}{\lambda(t)}\text{ and }\dd(t)<\delta_0\right]\\
\notag
\qquad\qquad\Longrightarrow  |A_{R}(u(t),\partial_t u(t))|\leq C_1\left(\frac{1}{\rho^{\frac{N-2}{2}}}\dd(t)+\dd(t)^2\right).
\end{gather}

By \eqref{defAR}, there exists $C_2>0$ such that
\begin{equation}
\label{boundARn}
A_{R}(u,\partial_t u)\leq C_2\int_{\left|x\right|\geq R} r(u)dx,
\end{equation}
where $r(u)$ is defined in \eqref{defe(u)}.
Let $\rho_{\eps}:=2R_0(\eps/C_2)$, where $R_0$ is defined in \eqref{defR_0}. Assume that $\ds R\geq 2|x(t)|+\frac{\rho_{\eps}}{\lambda(t)}$. Then
$$ |x|\geq R\Longrightarrow |x-x(t)|\geq R-|x(t)|\geq \frac{R}{2}\geq\frac{R_0(\eps/C_2)}{\lambda(t)}.$$
By \eqref{boundARn} and the definition of $R_0$, we get \eqref{bound.larged}.

Let us show \eqref{bound.smalld}. Let $t$ such that $\dd(t)< \delta_0$, where $\delta_0$ is the parameter given by \S \ref{sub.modul}. Recall that by \eqref{hyp.equal.modul}, $\lambda(t)=\mu(t)$ and $\xx(t)=x(t)$.

For any $\lambda_0,x_0$, we know that $W_{\lambda_0,x_0}$ is a solution of \eqref{CP} independent of $t$, so that $g_R(t)=0$, and $g'_R(t)=0$, which shows by Claim \ref{calculs} that $A_R(W_{\lambda_0,x_0},0)=0$. Thus
$$ A_{R}(u,\partial_t u)=A_{R}(u,\partial_t u)-A_{R}\left(W_{\frac{1}{\mu},-\xx},0\right).$$
By the change of variable $x=\xx+\frac{y}{\mu}$ we get
\begin{multline}
\label{boundaR}
\int a_{R}^{jk}(x)\partial_j u(x) \partial_k u(x)dx-\int a_{R}^{jk} \frac{\partial}{\partial_{x_j}} \left(W_{\frac{1}{\mu},-\xx}\right)(x)\frac{\partial}{\partial_{x_k}}\left( W_{\frac{1}{\mu},-\xx}\right)(x)dx\\
=\int a_R^{jk}\big(\xx+\frac{y}{\mu}\big)\partial_{j} (W+f)\partial_{k} (W+f)dy-
\int a_{R}^{jk}\big(\xx+\frac{y}{\mu}\big)\partial_j W \partial_k Wdy\\
=\int a_{R}^{jk}\big(\xx+\frac{y}{\mu}\big)\left(\partial_j W\partial_k f+\partial_j f\partial_k W\right) dy+\int a_{R}^{jk}\big(\xx+\frac{y}{\mu}\big)\partial_j f\partial_k fdy.
\end{multline}
where $f=u_{\mu,\xx}-W$, is such that  $\|\nabla f(t)\|_2\leq C_0\dd(t)$ by Lemma \ref{lem.est.modul}. Now, a similar calculation on the other terms of $A_{R}(u,\partial_t u)-A_{R}(W_{\frac{1}{\mu},-\xx})$ yields the bound
\begin{multline}
\label{majorAR1}
\left|A_R(u,\partial_tu)\right|=\left|A_{R}(u,\partial_t u)-A_{R}\Big(W_{\frac{1}{\mu},-\xx},0\Big)\right|\\
\leq C\int_{\left|\xx+\frac{y}{\mu}\right|\geq R} \bigg[|\nabla f|^2+|\nabla W\cdot\nabla f|+ W^{2^*-1} |f|+|f|^{2*}+ \frac{1}{\mu^2\big|\xx+\frac{y}{\mu}\big|^2}\left(W|f|+|f|^2\right)\bigg] dy.
\end{multline}

Let us bound the terms of the right-hand side of \eqref{majorAR1} that are linear in $f$
Recall that $\mu(t)=\lambda(t)$ and $\xx(t)=x(t)$. Using that $R\geq 2|x(t)|+\frac{2\rho}{\mu(t)}$, we get, if $\left|\xx+\frac{y}{\mu}\right|\geq R$
$$\frac{|y|}{\mu}\geq 2\frac{\rho}{\mu}\Longrightarrow |y|\geq 2\rho
\text{ and }\left|\xx+\frac{y}{\mu}\right|\geq \left|\frac{y}{\mu}\right|-\xx\geq \left|\frac{y}{\mu}\right|-\frac{R}{2}\geq \left|\frac{y}{\mu}\right|-\frac{\rho}{\mu}\geq \frac{1}{2}\left|\frac{y}{\mu}\right|.$$
Thus, recalling that $W(y)\approx |y|^{2-N}$ for large $|y|$.
\begin{gather*}
\int_{\left|\xx+\frac{y}{\mu}\right|\geq R}|\nabla W\cdot\nabla f|dy \leq \left(\int_{|y|\geq 2\rho}|\nabla W|^2\right)^{1/2}\|\nabla f\|_{2}\leq \frac{C}{\rho^{\frac{N-2}{2}}}\dd(t)\\
\int_{\left|\xx+\frac{y}{\mu}\right|\geq R}W^{2^*-1} |f|dy\leq \left(\int_{|y|\geq 2\rho} W^{2^*}\right)^{\frac{N+2}{2N}}\|f\|_{2^*}\leq \frac{C}{\rho^{\frac{N+2}{2}}}\dd(t)\\ 
\int_{\left|\xx+\frac{y}{\mu}\right|\geq R} \,\frac{W|f|}{\mu^2\big|\xx+\frac{y}{\mu}\big|^2} dy \leq \int_{\left|\xx+\frac{y}{\mu}\right|\geq R} \,4\frac{W|f|}{\big|y\big|^2}\leq C \left(\int_{|y|\geq 2\rho}\frac{1}{|y|^2}|W|^2\right)^{1/2}\|\nabla f\|_2\leq\frac{C}{\rho^{\frac{N-2}{2}}}\dd(t).
\end{gather*}

By \eqref{majorAR1}, we get \eqref{bound.smalld}.\par

We are now ready to show \eqref{boundgR'}. Note that by Claim \ref{calculs}, we have, for a small constant $\tilde{c}>0$, 
\begin{equation}
\label{bound1gR'}
g_R'(t)\leq -\tilde{c}\dd(t)+|A_R(u,\partial_t u)|.
\end{equation}
Chose $\delta_1:=\min\left\{\delta_0,\frac{\tilde{c}}{4C_1}\right\}$ and $\rho_1>1$ such that $\frac{C_1}{\rho_1^{\frac{N-2}{2}}}\leq \frac{\tilde{c}}{4}$ where $C_1$ is the constant in \eqref{bound.smalld}. By \eqref{bound.smalld}
$$ \dd(t)<\delta_1\text{ and }R\geq 2\Big(|x(t)| +\frac{\rho_1}{\lambda(t)}\Big)\Longrightarrow \left|A_R(u,\partial_t u)\right|\leq \frac{\tilde{c}}{2}\dd(t).$$
According to \eqref{bound.larged} with $\eps:=\frac{\tilde{c}}{2}\delta_1$,
$$ \dd(t)\geq \delta_1\text{ and }R\geq 2|x(t)|+\frac{\rho_{\eps}}{\lambda(t)}\Longrightarrow \left|A_R(u,\partial_t u)\right|\leq \frac{\tilde{c}}{2}\delta_1\leq \frac{\tilde{c}}{2}\dd(t).$$
In view of \eqref{bound1gR'}, we get \eqref{boundgR'} under the assumption \eqref{hyp.gR'} for $\rho_0:=\max(2\rho_1,\rho_{\eps},2)$. Step 2 is complete.

\medskip
\noindent\emph{Step 3. End of the proof.}
Take $$R:=2\rho_0 \sup_{\sigma\leq t\leq \tau}\left(\frac{1}{\lambda(t)}+|x(t)|\right),$$
 where $\rho_0$ is given by Step $2$. Then by \eqref{boundgR'}
$$ \forall t\in  [\sigma,\tau],\quad c\dd(t)\leq -g'_R(t)$$
Integrating between $\sigma$ and $\tau$, we get, in view of \eqref{boundgRbis}
$$ c\int_{\sigma}^{\tau} \dd(t)dt\leq |g_R(\sigma)|+|g_R(\tau)|\leq C_0 R(\dd(\sigma)+\dd(\tau)),$$
which yields the conclusion of Lemma \ref{lem.virial}.
\end{proof}

%Indeed, let $\ds \ell_n>0$ and
%\begin{gather*}
%u^*_n(t,x)=\frac{1}{\ell_{n}^{\frac{N-2}{2}}}u_n\left(\frac{t}{\ell_n},\frac{x-x_n(0)}{\ell_n}\right),\quad \lambda^*(t)=\frac{\lambda(t)}{\ell_n},\\ 
%t^0_n^*=\frac{t^0_n}{\ell_n^2},\quad t^1_n^*=\frac{t^1_n}{\ell_n^2},\quad x_n^*(t):=\frac{x_n(t)-x_n(0)}{\ell_n}\\ \widetilde{K}^*=\left\{(u_n^*(t))_{[\lambda^*(t)]},\;n\in \NN,\;t\in (t^0_n^*,t^1_n^*)\right\}.
%\end{gather*}
%Then $u_n^*$, $t^0_n^*$, $t^1_n^*$, $\lambda^*$ and $\widetilde{K}^*$ fullfill the assumptions of Lemma \ref{lem.CV0.seq}. Furthermore, the conclusions \eqref{delta.un.conc} of the Lemma is not changed by the preceding transformations, and we may take 
%$$\ell_n=\max \left\{\frac{1}{\inf_{t\in[t^1_n,t_{2n}]} \lambda(t)}, \sup_{t\in\left[t^0_n,t^1_n\right]} |x_n(t)-x_n(0)| \right\},$$
%so that \eqref{minor.lambda} is satisfied for the new sequence $u_n^*$.

\begin{proof}[Proof of Lemma \ref{lem.bound.x}]
\emph{Step 1. Bounds by compactness on a short time interval.} We show that there exists $C_1>0$ such that 
\begin{equation}
\label{boundx1}
\forall \tau,\sigma\geq 0,\quad |\tau-\sigma|\leq \frac{1}{\lambda(\tau)}\Longrightarrow \lambda(\tau)|x(\tau)-x(\sigma)|+\frac{\lambda(\tau)}{\lambda(\sigma)}+\frac{\lambda(\sigma)}{\lambda(\tau)}\leq C_1.
\end{equation}
If not, we may find sequences $\tau_n,\sigma_n\geq 0$ such that
\begin{equation}
\label{contrax1}
|\tau_n-\sigma_n|\leq \frac{1}{\lambda(\tau_n)},\quad \lambda(\tau_n)|x(\tau_n)-x(\sigma_n)|+\frac{\lambda(\tau_n)}{\lambda(\sigma_n)}+\frac{\lambda(\sigma_n)}{\lambda(\tau_n)}\underset{n\rightarrow +\infty}{\longrightarrow} +\infty.
\end{equation}
Extracting subsequences, we may assume
\begin{equation}
 \label{lim.s0}
\lim_{n\rightarrow +\infty} \lambda(\tau_n)(\sigma_n-\tau_n)=s_0\in [-1,1].
\end{equation}
Consider the solution of \eqref{CP}
$$ v_n(s,y):=\frac{1}{(\lambda(\tau_n))^{\frac{N-2}{2}}} u\left(\frac{s}{\lambda(\tau_n)}+\tau_n,\frac{y}{\lambda(\tau_n)}+x(\tau_n)\right).$$
By compactness of $\overline{K}$, extracting subsequences if necessary, $\left(v_n,\frac{\partial v_n}{\partial s}\right)_{\restriction s=0}$ has a limit $(v_0,v_1)$ in $\hdot\times L^2$. Let $v$ be the solution of \eqref{CP} with initial condition $(v_0,v_1)$, which is globally defined according to Corollary \ref{corolGlobal}. By Proposition \ref{prop.criterion} \eqref{continuity},
$$ w_{n}(y):=v_n\left(\lambda(\tau_n)(\sigma_n-\tau_n),y\right)=\frac{1}{(\lambda(\tau_n))^{\frac{N-2}{2}}} u\left(\sigma_n,\frac{y}{\lambda(\tau_n)}+x(\tau_n)\right)\underset{n\rightarrow +\infty}{\longrightarrow} v(s_0,y) \text{ in }\hdot.$$
Furthermore the compactness of $\overline{K}$ implies that the following sequence stays inside a compact set of $\hdot$.
$$ \frac{1}{(\lambda(\sigma_n))^{\frac{N-2}{2}}}u\left(\sigma_n,\frac{y}{\lambda(\sigma_n)}+x(\sigma_n)\right)=\left(\frac{\lambda(\tau_n)}{\lambda(\sigma_n)}\right)^{\frac{N-2}{2}}w_n\left(\frac{\lambda(\tau_n)}{\lambda(\sigma_n)}y+\lambda(\tau_n)(x(\sigma_n)-x(\tau_n))\right).$$
Thus
$\frac{\lambda(\tau_n)}{\lambda(\sigma_n)}$, $\frac{\lambda(\tau_n)}{\lambda(\sigma_n)}$ and $\lambda(\tau_n)(x(\tau_n)-x(\sigma_n))$ must be bounded, contradicting \eqref{contrax1}.

\medskip

\emph{Step 2. Control of the variations of $\dd$.} Let $\delta_0>0$ be given by Subsection \ref{sub.modul}. Let us show
\begin{equation}
\label{major.dd}
\exists \delta_1>0,\; \forall \tau\geq 0,\quad \sup_{\tau\leq t\leq \tau+\frac{1}{\lambda(\tau)}} \dd(t)>\delta_0 \Longrightarrow \inf_{\tau\leq t\leq \tau+\frac{1}{\lambda(\tau)}}\dd(t)> \delta_1.
\end{equation}
Indeed, assume that it does not hold, so that (extracting if necessary), we may find sequences $(\tau_n)_n$, $(t_n)_n$, $(t_n')_n$, such that 
\begin{equation}
\label{absurdtn}
t_n,t_n'\in \left[\tau_n,\tau_n+\frac{1}{\lambda(\tau_n)}\right],\quad \dd(t_n)\rightarrow 0\text{ and }\dd(t'_n)> \delta_0.
\end{equation}
Let
$$ v_n(s,y):=\frac{1}{(\lambda(t_n))^{\frac{N-2}{2}}} u\left(\frac{s}{\lambda(t_n)}+t_n,\frac{y}{\lambda(t_n)}+x(t_n)\right).$$
By the compactness of $K$, and the fact that $\dd(t_n)$ tends to $0$, we may assume that $(v_n(0),\partial_sv_n(0))$ tends to some $W_{\lambda_0,x_0}$.\par 
By Step 1, $\frac{1}{\lambda(t_n)}\leq \frac{C_1}{\lambda(\tau_n)}$, thus $\lambda(t_n)(t_n-t'_n)$ is bounded. Extracting if necessary, we may assume $\lim_n \lambda(t_n)(t_n-t'_n)=s_0\in [-1,1]$. By Proposition \ref{prop.criterion} \eqref{continuity},
\begin{equation}
\label{CVvn}
\lim_{n\rightarrow \infty} v_n\big(\lambda(t_n)(t_n-t'_n)\big)=\pm W_{\lambda_0,x_0} \text{ in } \hdot.
\end{equation}
Furthermore, $\ds v_n(\lambda(t_n)(t_n-t'_n))=\frac{1}{\lambda(t_n)^{\frac{N-2}{2}}}u\left(t'_n,\frac{y}{\lambda(t_n)}+x(t_n)\right)$. Thus by \eqref{absurdtn}, $\|\nabla W\|_2^2-\|\nabla v_n\|_2^2> \delta_0$, which contradicts \eqref{CVvn}. Step 2 is complete.

\medskip

\noindent\emph{Step 3. End of the proof}
We first show that , there exists $C>0$ such that
\begin{equation}
\label{weakcontrol}
0\leq \sigma\leq \tsigma\leq \ttau\leq \tau=\sigma+\frac{1}{C_1\lambda(\sigma)} \Longrightarrow |x(\ttau)-x(\tsigma)|+\left|\frac{1}{\lambda(\ttau)}-\frac{1}{\lambda(\tsigma)}\right|\leq C\int_{\sigma}^{\tau} \DD(r)dr,
\end{equation}
where $C_1\geq 1$ is the constant defined in Step 1. Indeed, if $\dd(t)\leq \delta_0$ for $t\in [\sigma,\tau]$, we have by \eqref{hyp.equal.modul} that $x(t)=\xx(t)$ and $\lambda(t)=\mu(t)$ on $[\sigma,\tau]$. Thus by \eqref{est.modul.3} in Lemma \ref{lem.est.modul},
\begin{equation*}
|x(\tsigma)-x(\ttau)|+\left|\frac{1}{\lambda(\tsigma)}-\frac{1}{\lambda(\ttau)}\right|=\left|\int_{\tsigma}^{\ttau} x'(t)dt\right|+\left|\int_{\tsigma}^{\ttau} \frac{\lambda'(t)}{\lambda^2(t)}dt\right|\\
\leq C \int_{\sigma}^{\tau} \dd(t)dt,
\end{equation*}
which yields \eqref{weakcontrol} in this case. The second case is when there exists a $t\in[\sigma,\tau]$ such that $\dd(t)> \delta_0$. By Step 2, we get that $\dd(t)>\delta_1$ for all $t\in [\sigma,\tau]$. Note that by Step 1, $|\tsigma-\ttau|\leq \frac{1}{C_1\lambda(\sigma)}\leq \frac{1}{\lambda(\tsigma)}$, and thus, again by Step 1, $|x(\tsigma)-x(\ttau)|\leq \frac{C_1}{\lambda(\tsigma)}$ and 
$\left|\frac{1}{\lambda(\tsigma)}-\frac{1}{\lambda(\ttau)}\right|=\frac{1}{\lambda(\tsigma)}\left|1-\frac{\lambda(\tsigma)}{\lambda(\ttau)}\right|\leq \frac{2C_1}{\lambda(\tsigma)}$. 
$$ |x(\tsigma)-x(\ttau)|+\left|\frac{1}{\lambda(\tsigma)}-\frac{1}{\lambda(\ttau)}\right|\leq \frac{3C_1}{\lambda(\tsigma)}\leq \frac{3C_1^2}{\lambda(\sigma)}= 3C_1^2|\sigma-\tau|\leq \frac{ 3C_1^2}{\delta_1}\int_{\sigma}^{\tau} \dd(t) dt.$$
The proof of \eqref{weakcontrol} is complete.\par
It is straightforward to deduce the conclusion of Lemma \ref{lem.bound.x} from \eqref{weakcontrol} , dividing the interval $[\sigma,\tau]$ into small subintervals, and we omit the details.
\end{proof}

\section{Supercritical case for $L^2$ solutions}
\label{sec.super}
In this section we study a solution $u$ of \eqref{CP} such that
\begin{gather}
\label{hyp.L2}
u_0 \in L^2\\
\label{hyp.super2}
E(u_0,u_1)=E(W,0),\quad \|\nabla u_0\|_2>\|\nabla W\|_2\\
\label{hyp.noblowup}
T_+(u)=+\infty.
\end{gather}
Our main result is the following.
\begin{prop}
\label{prop.sur.L2}
Let $u$ be a solution of \eqref{CP} with $N\in\{3,4,5\}$ satisfying \eqref{hyp.L2}, \eqref{hyp.super2} and \eqref{hyp.noblowup}. Then $N=5$ and changing $u$ into $-u$ if necessary, there exist $c,C>0$ and $\lambda_0,x_0$ such that
\begin{equation}
\label{CV.sur.L2}
\forall t\geq 0,\quad \|\nabla u(t)-\nabla W_{\lambda_0,x_0}\|_2+\|\partial_t u(t)\|_2\leq Ce^{-ct}.
\end{equation}
\end{prop}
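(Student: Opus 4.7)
The strategy has two phases. First, transfer the machinery of Sections \ref{sec.compact}--\ref{sec.sub} to the supercritical regime and obtain exponential convergence of $u(t)$ to some $W_{\lambda_\infty,x_\infty}$ in $\hdot$ with $\partial_t u(t)\to 0$ in $L^2$. Second, exploit the hypothesis $u_0\in L^2$ through the virial identity for $y(t):=\|u(t)\|_2^2$ to force $W_{\lambda_\infty,x_\infty}\in L^2$, which among $N\in\{3,4,5\}$ is possible only for $N=5$.

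In the first phase I would establish the supercritical analogs of Lemma \ref{lem.compact} and Proposition \ref{propsub}: the existence of modulation parameters $(\lambda(t),x(t))$ along which $\{(u(t),\partial_tu(t))_{\lambda(t),x(t)}\}_{t\geq 0}$ is precompact in $\hdot\times L^2$, together with $\int u_1\nabla u_0=0$, $t\lambda(t)\to+\infty$, and $x(t)/t\to 0$. The main inputs are that the super-threshold inequality $\|\nabla u(t)\|_2>\|\nabla W\|_2$ persists for all $t\geq 0$ (Remark \ref{RemPersist}) and that $T_+=+\infty$ replaces the subcritical scattering dichotomy of \cite{KeMe06Pb}; the Bahouri--G\'erard profile decomposition and the variational bounds underlying Claim \ref{usefulclaim} are unaffected by the sign of $\|\nabla u\|_2-\|\nabla W\|_2$. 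Once this compactness is in hand, the modulation analysis of Subsection \ref{sub.modul} and the virial and parameter-control lemmas (Lemmas \ref{lem.virial} and \ref{lem.bound.x}) go through essentially verbatim, since Claim \ref{coercivity} and Lemma \ref{lem.est.modul} depend only on the energy identity and on closeness to $\pm W$, while the coercive lower bound \eqref{bound.important} is insensitive to the sign of $\|\nabla u\|_2-\|\nabla W\|_2$. This yields
\[\|\nabla(u(t)-W_{\lambda_\infty,x_\infty})\|_2+\|\partial_t u(t)\|_2\leq Ce^{-ct}.\]

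For the second phase, since $\|\partial_tu(t)\|_2$ is uniformly bounded and $u_0\in L^2$, one has $\|u(t)-u_0\|_2\leq Ct$, so $y(t):=\int|u(t,x)|^2\,dx$ is finite for every $t\geq 0$. Differentiating twice, using \eqref{CP}, and substituting $E(u,\partial_tu)=\frac{1}{N}\|\nabla W\|_2^2$ yields
\[y''(t)=(2+2^*)\|\partial_t u(t)\|_2^2+(2^*-2)\bigl(\|\nabla u(t)\|_2^2-\|\nabla W\|_2^2\bigr)\geq 0\]
by Remark \ref{RemPersist}. The exponential decay from the first phase gives $y''\in L^1([0,\infty))$, so $y'$ is nondecreasing with a finite limit $L\in\RR$ and $y'(t)\leq L$ for every $t$; necessarily $L\geq 0$, else $y(t)\to-\infty$. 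Thus $y(t)\leq y(0)+Lt$, and Cauchy--Schwarz gives
\[y'(t)^2\leq 4y(t)\|\partial_t u(t)\|_2^2\leq C(1+t)e^{-ct}\longrightarrow 0,\]
forcing $L=0$. Consequently $y$ is nonincreasing, so $\|u(t)\|_2\leq\|u_0\|_2$ uniformly in $t\geq 0$.

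Combining the strong convergence $u(t)\to W_{\lambda_\infty,x_\infty}$ in $\hdot$ with this uniform $L^2$ bound, one extracts a weakly $L^2$-convergent subsequence whose limit must coincide with $W_{\lambda_\infty,x_\infty}$ (tested against Schwartz functions), so $W_{\lambda_\infty,x_\infty}\in L^2$. Since $W(x)\sim|x|^{-(N-2)}$ at infinity, this holds precisely when $N\geq 5$, forcing $N=5$ and delivering \eqref{CV.sur.L2}. The main obstacle is the first phase: verifying that every coercivity, virial, and Kenig--Merle-type momentum and parameter-control estimate from Sections \ref{sec.compact}--\ref{sec.sub} survives the sign change $\|\nabla u\|_2>\|\nabla W\|_2$. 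The second phase is then a short virial-and-weak-limit exercise.
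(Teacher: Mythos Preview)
Your Phase 1 does not go through, and the order of the two phases is exactly inverted relative to what the paper does.

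\textbf{Why Phase 1 fails as stated.} The compactness Lemma \ref{lem.compact} rests on the Kenig--Merle minimality mechanism: any profile with energy strictly below $E(W,0)$ and subcritical gradient \emph{scatters}, so the profile decomposition collapses to a single profile carrying the full energy. In the supercritical regime there is no analogous dichotomy to feed the argument; you cannot simply say ``$T_+=+\infty$ replaces scattering.'' More basically, you do not even have an a priori bound on $\|\nabla u(t)\|_2$: Claim \ref{usefulclaim} gives the upper bound $\|\nabla u\|_2\le\|\nabla W\|_2$ only in the subcritical case, and without such a bound the profile decomposition / compactness program does not begin. Finally, your assertion that \eqref{bound.important} is ``insensitive to the sign of $\|\nabla u\|_2-\|\nabla W\|_2$'' is false: writing $A=\|\partial_t u\|_2^2$ and $B=\|\nabla u\|_2^2-\|\nabla W\|_2^2>0$, the left side of \eqref{bound.important} equals $-(A+B)/(N-2)<0$ while the right side is $(A-B)/(N+2)$; the inequality can fail. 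The virial identity in fact changes sign in the supercritical case.

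\textbf{What the paper actually does.} The paper uses the $L^2$ hypothesis \emph{first}, not last. With $y(t)=\|u(t)\|_2^2$ one computes $y''(t)\ge \DD(t)>0$ (this is \eqref{calcul.y''}) and $y'(t)^2\le \tfrac{N-2}{N-1}\,y(t)\,y''(t)$. If $y'(t_0)\ge 0$ for some $t_0$, this differential inequality forces finite-time blow-up, contradicting $T_+=+\infty$; hence $y'(t)<0$ for all $t\ge 0$. Then $y$ is bounded, and combining $\int_t^\infty \DD\le |y'(t)|$ with $|y'(t)|\le C\DD(t)$ yields $\int_t^\infty \DD(s)\,ds\le Ce^{-ct}$ (Lemma \ref{lem.y.y'}) with no compactness input whatsoever. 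A Cauchy-criterion argument gives $u(t)\to u_\infty$ in $L^2$; along a subsequence $\DD(t_n)\to 0$ one has $u(t_n)\to W_{\lambda_0,x_0}$ in $\hdot$, hence $W_{\lambda_0,x_0}=u_\infty\in L^2$, forcing $N=5$. Only then is the modulation theory of Subsection \ref{sub.modul} invoked, and only locally (for $\DD(t)<\delta_0$), where it is valid regardless of the sign of $\|\nabla u\|_2-\|\nabla W\|_2$. The $L^2$ convergence keeps $\mu(t)$ bounded, and \eqref{est.modul.3} upgrades $\int_t^\infty\DD\le Ce^{-ct}$ to \eqref{CV.sur.L2}. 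Your Phase 2 contains the right ingredients, but it is the engine of the proof, not a corollary of a nonexistent Phase 1.
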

\begin{remark}
In dimension $N=3$ or $N=4$, Proposition \ref{prop.sur.L2} asserts than any solution of \eqref{CP} satisfying \eqref{hyp.L2} and \eqref{hyp.super2} must blow-up in finite time for positive and negative time. We are not able to prove \eqref{CV.sur.L2}. Nevertheless, one can show the weaker property $$\lim_{T\rightarrow +\infty} \frac{1}{T}\int_0^T \dd(t)dt=0.$$
\end{remark}
Let 
\begin{equation*}
y(t):=\int_{\RR^N} (u(t))^2dx,
\end{equation*}
and define $\DD(t)$ by \eqref{defdd}. We first prove the following.
\begin{lemma}
\label{lem.y.y'}
Let $u$ satisfying the assumptions of Proposition \ref{prop.sur.L2}. Then
\begin{gather}
\label{y'<0}
\forall t\geq 0,\quad  y'(t)<0\\
\label{lim.L2}
\lim_{t\rightarrow +\infty} y(t)=y_{\infty}\in (0,+\infty)\\
\label{intdd2}
\int_t^{+\infty}\DD(s)ds \leq Ce^{-ct}.
\end{gather}
\end{lemma}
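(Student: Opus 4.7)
The idea is to derive and exploit a differential inequality for $y(t) = \|u(t)\|_2^2$, which is available thanks to hypothesis \eqref{hyp.L2}. Differentiating twice and using \eqref{CP} together with integration by parts gives
\begin{equation*}
y''(t) = 2\|\partial_t u(t)\|_2^2 - 2\|\nabla u(t)\|_2^2 + 2\|u(t)\|_{2^*}^{2^*}.
\end{equation*}
Eliminating $\|u\|_{2^*}^{2^*}$ via the energy identity $E(u,\partial_t u) = E(W,0)$ and the relation $\|\nabla W\|_2^2 = NE(W,0)$ yields
\begin{equation*}
y''(t) = \frac{4(N-1)}{N-2}\|\partial_t u(t)\|_2^2 + \frac{4}{N-2}\bigl(\|\nabla u(t)\|_2^2 - \|\nabla W\|_2^2\bigr).
\end{equation*}
By \eqref{hyp.super2} and Remark \ref{RemPersist}, $\|\nabla u(t)\|_2 > \|\nabla W\|_2$ for every $t \geq 0$, hence $y''(t) \geq c_0 \DD(t) > 0$ for some universal $c_0 > 0$.

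For \eqref{y'<0}, Cauchy-Schwarz $(y'(t))^2 \leq 4y(t)\|\partial_t u(t)\|_2^2$ combined with the bound on $y''$ gives $y y'' \geq \tfrac{N-1}{N-2}(y')^2$. Setting $\alpha = 1/(N-2)$, this is equivalent to $(y^{-\alpha})'' \leq 0$, so $y^{-\alpha}$ is concave on $[0,+\infty)$. If $y'(t_0) \geq 0$ at some $t_0$, the strict positivity of $y''$ gives $y'(t) > 0$ for $t$ slightly larger, hence $(y^{-\alpha})'(t) < 0$; concavity then forces $y^{-\alpha}$ to become negative in finite time, contradicting $y > 0$. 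Finiteness of $y(t)$ on $[0,+\infty)$ follows from standard $L^2$ persistence for \eqref{CP} given $u_0, u_1 \in L^2$.

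For \eqref{lim.L2}, $y$ being strictly decreasing and nonnegative gives $y_\infty := \lim y(t) \in [0,\infty)$. Since $y'$ is increasing and negative, it has a limit $L \in [-\infty,0]$; $L<0$ would make $y\to-\infty$, so $L=0$, and integrating yields $\int_0^\infty \DD(s)\,ds \leq -y'(0)/c_0 < +\infty$. To rule out $y_\infty = 0$, extract $t_n \to \infty$ with $\DD(t_n) \to 0$; by the Aubin-Talenti characterization \eqref{CarW2} there exist $\mu_n, X_n$ such that $v_n := u(t_n)_{\mu_n, X_n}$ converges (after changing $u$ into $-u$ if needed) to $W$ in $\hdot$. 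For $N = 5$, $W \in L^2$ and a supercritical compactness argument analogous to Lemma \ref{lem.compact} controls $\mu_n$ away from $0$ and $\infty$, so $\|u(t_n)\|_2 = \mu_n^{-1}\|v_n\|_2$ stays bounded away from $0$. For $N \in \{3,4\}$, $W \notin L^2$, so boundedness of $\|v_n\|_2$ would force $W \in L^2$ by weak $L^2$-compactness combined with uniqueness of distributional limits; hence $\|v_n\|_2 \to +\infty$, and compactness of $\mu_n$ then forces $\|u(t_n)\|_2 = \mu_n^{-1}\|v_n\|_2 \to +\infty$, contradicting $y(t_n) \leq y(0)$.

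Finally, for \eqref{intdd2}, integrating $y'' \geq c_0 \DD$ on $[\sigma,+\infty)$ and using $y'(\infty) = 0$ gives $\int_\sigma^\infty \DD(s)\,ds \leq -y'(\sigma)/c_0$, so it suffices to show exponential decay of $|y'(\sigma)|$. The naive bound $|y'|^2 \leq 4y\DD$ yields only $F(\sigma) := \int_\sigma^\infty \DD = O(1/\sigma)$, which is the main obstacle. The exponential rate requires a virial/modulation argument in the spirit of Lemma \ref{lem.virial}, adapted to the supercritical setting and using the supercritical compactness: together with the convergence of $u$ (along $t_n$) toward a modulated copy of $W$ and the coercivity Claim \ref{coercivity} applied to the linearization around $W$, one derives an estimate of the form $F(\sigma) \leq C\DD(\sigma) = -CF'(\sigma)$, from which Grönwall's lemma yields $F(\sigma) \leq Ce^{-c\sigma}$.
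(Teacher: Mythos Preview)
Your argument for \eqref{y'<0} is correct and essentially the paper's. The gaps are in \eqref{lim.L2} and \eqref{intdd2}, and they share a common source: you miss the \emph{linear} bound $\|\partial_t u(t)\|_2 \leq C\DD(t)$, and instead appeal to compactness and virial machinery from Sections~\ref{sec.compact}--\ref{sec.sub} that is established only in the subcritical regime $\|\nabla u_0\|_2 < \|\nabla W\|_2$ and is not available here.

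Concretely: to show $y_\infty>0$ you invoke ``a supercritical compactness argument analogous to Lemma~\ref{lem.compact}'' to control the scales $\mu_n$, and for \eqref{intdd2} you defer to a ``virial/modulation argument in the spirit of Lemma~\ref{lem.virial}''. Neither result has been proved in the supercritical case, and you give no indication of how to do so. Your $N\in\{3,4\}$ branch is moreover internally inconsistent: from $v_n\to W$ in $\hdot$ with $W\notin L^2$ you (correctly) get $\|v_n\|_2\to\infty$; since $\|v_n\|_2=\mu_n\|u(t_n)\|_2$ and $\|u(t_n)\|_2\leq\sqrt{y(0)}$, this forces $\mu_n\to\infty$, directly contradicting the ``compactness of $\mu_n$'' you invoke in the next sentence.

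The paper's route avoids compactness entirely. The modulation Lemma~\ref{lem.est.modul} (which requires only $E(u_0,u_1)=E(W,0)$, with no sign condition on $\|\nabla u_0\|_2-\|\nabla W\|_2$) gives $\|\partial_t u(t)\|_2\leq C\DD(t)$ whenever $\DD(t)\leq\delta_0$; for $\DD(t)>\delta_0$ one has trivially $\|\partial_t u\|_2^2\leq\DD(t)\leq\DD(t)^2/\delta_0$. Hence $\|\partial_t u\|_2\leq C\DD(t)$ for all $t\geq 0$, and since $y$ is bounded, Cauchy--Schwarz yields $|y'(t)|\leq C\DD(t)$. Combined with your own estimate $\int_t^\infty\DD\leq c_0^{-1}|y'(t)|$, this immediately gives $F(t)\leq -CF'(t)$ and thus \eqref{intdd2}. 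For $y_\infty>0$, assume $y_\infty=0$: then $y(t)=-\int_t^\infty y'\leq C\int_t^\infty\DD$, so
\[
\int_t^\infty\DD\;\leq\; |y'(t)|\;\leq\;2\sqrt{y(t)}\,\|\partial_t u(t)\|_2\;\leq\; C\DD(t)\Bigl(\int_t^\infty\DD\Bigr)^{1/2},
\]
i.e.\ $\sqrt{Y}\leq -CY'$ with $Y(t)=\int_t^\infty\DD>0$. This forces $Y$ to vanish in finite time, impossible since $\DD(t)>0$ for every $t$.
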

\begin{corol}
\label{corolblowup-}
Under the assumptions of Proposition \ref{prop.sur.L2}, $T_-(u)<\infty$.
\end{corol}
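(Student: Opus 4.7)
The plan is to prove the corollary by a short symmetry argument that reduces it directly to Lemma \ref{lem.y.y'}. I will argue by contradiction: suppose $T_-(u) = +\infty$, and introduce the time-reversed function $v(t,x) := u(-t,x)$, which is the solution of \eqref{CP} with initial data $(v_0, v_1) = (u_0, -u_1)$. First I would check that $v$ satisfies the three hypotheses of Proposition \ref{prop.sur.L2}: assumption \eqref{hyp.L2} concerns $u_0$ alone and is preserved; the energy depends on $u_1$ only through $\|u_1\|_2^2$ and the gradient norm is unchanged, so \eqref{hyp.super2} transfers; and $T_+(v) = T_-(u) = +\infty$ by the contradiction hypothesis, giving \eqref{hyp.noblowup}.

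Then Lemma \ref{lem.y.y'} applies to $v$ and yields $\frac{d}{dt}\int v(t,x)^2\,dx < 0$ for all $t \geq 0$. Evaluating at $t = 0$ gives
\[
2\int v_0 v_1\,dx = -2\int u_0 u_1\,dx < 0.
\]
On the other hand, Lemma \ref{lem.y.y'} applied to $u$ itself gives $y'(0) = 2\int u_0 u_1\,dx < 0$. These two strict inequalities have opposite signs and are therefore incompatible, so the assumption $T_-(u) = +\infty$ must fail.

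Since all the analytic work has been done in Lemma \ref{lem.y.y'} (the virial-type monotonicity of the $L^2$-norm, the exponential integrability of $\DD$, and the positivity of $y_\infty$), there is no genuine obstacle in the corollary itself. The only point needing care is verifying that the hypotheses of Proposition \ref{prop.sur.L2} are invariant under time reversal combined with the flip $u_1 \mapsto -u_1$, which is a direct check from the definitions of the energy and of $T_\pm$.
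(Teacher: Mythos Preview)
Your argument is correct and is essentially identical to the paper's own proof: both assume $T_-(u)=+\infty$, apply \eqref{y'<0} of Lemma \ref{lem.y.y'} to the time-reversed solution $u(-t,x)$, and derive the contradiction that $y'(0)$ is simultaneously strictly negative and strictly positive. The only difference is that you spell out the verification that the hypotheses of Proposition \ref{prop.sur.L2} transfer to $v(t,x)=u(-t,x)$, whereas the paper states this in a parenthetical remark.
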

\begin{proof}[Proof of Corollary \ref{corolblowup-}]
Indeed by \eqref{y'<0}, $y'(t)<0$. But if $T_-(u)=+\infty$, \eqref{y'<0} applied to the solution $u(-t,x)$ of \eqref{CP} (which also satisfies the assumptions of Proposition \ref{prop.sur.L2}) shows that $y'(t)>0$, which is a contradiction.
\end{proof}
\begin{proof}[Proof of Lemma \ref{lem.y.y'}]
By direct calculation (and using equation \eqref{CP} and assumption \eqref{hyp.super2} to compute $y''$)
\begin{align}
\label{calcul.y'}
y'(t)&=2\int_{\RR^N} u(t)\partial_t u(t) dx\\
\label{calcul.y''}
y''(t)&= 2\int (\partial_t u (t))^2-|\nabla u(t)|^2+|u(t)|^{2^*}\\
\notag
&=4\frac{N-1}{N-2}\int (\partial_t u(t))^2+\frac{4}{N-2}\left[\int |\nabla u(t)|^2-\int |\nabla W|^2\right]\geq \DD(t).
\end{align}
Furthermore, by Cauchy-Schwarz inequality, 
\begin{equation}
\label{yy'y''}
y'(t)^2\leq 4\int (u(t))^2 \int (\partial_t u (t))^2\leq \frac{N-2}{N-1} y(t) y''(t).
\end{equation}

\medskip

\noindent\emph{Proof of \eqref{y'<0}.}We argue by contradiction. Note that by Remark \ref{RemPersist}, assumption \eqref{hyp.super2} implies that $\|\nabla u(t)\|_2>\|\nabla W\|_2$ for all $t$. By \eqref{calcul.y''}, $y''(t)>0$ for any $t\geq 0$. Assume that for some $t_0$, $y'(t_0)\geq 0$. 
\begin{equation}
\label{absurd.y'>0}
\forall t>t_0,\quad y'(t)>0.
\end{equation}
Hence by \eqref{yy'y''}, $\frac{N-1}{N-2}\frac{y'}{y}\leq  \frac{y''}{y'}$, which yields by integration
$$ \forall t\geq t_0+1,\quad \frac{y'(t)}{y'(t_0+1)} \geq \left(\frac{y(t)}{y(t_0+1)}\right)^{\frac{N-1}{N-2}},$$
which leads to blow-up in finite time from the fact that $\frac{N-1}{N-2}>1$, contradicting \eqref{hyp.noblowup}.

\medskip

\noindent\emph{Proof of \eqref{lim.L2}.} The function $y$ is positive and, by \eqref{y'<0}, decreasing. Thus
\begin{equation}
\label{lim.L2a}
\lim_{t\rightarrow +\infty} y(t)=y_{\infty}\in [0,+\infty).
\end{equation}

We must show $y_\infty>0$. Let us first show that for $t\geq 0$
\begin{equation}
\label{bnd.dtu}
|y'(t)|\leq C\|\partial_t u(t)\|_2\leq C \DD(t).
\end{equation}
By Cauchy-Schwarz, $|y'(t)|\leq \|u(t)\|_2\|\partial_t u(t)\|_2$. By \eqref{lim.L2a}, $\|u(t)\|_2=\sqrt{y(t)}$ is bounded, which shows the first bound in \eqref{bnd.dtu}. According to Lemma \ref{lem.est.modul}, if $\DD(s)\leq \delta_0$ then $\|\partial_t u(t)\|_2\leq C\DD(t).$
Furthermore, if $\DD(s)\geq \delta_0$, $\|\partial_t u(t)\|_2^2\leq \DD(t)\leq \frac{1}{\delta_0}\DD(t)^2$, hence the bound $\|\partial_t u(t)\|_2\leq C \DD(t)$, which concludes the proof of \eqref{bnd.dtu}.

To show that $y_{\infty}>0$, we argue by contradiction. Assume that $y_{\infty}=0$. By\eqref{bnd.dtu}, 
\begin{equation}
\label{inter1}
y(t)=-(y_{\infty}-y(t))=-\int_t^{+\infty} y'(s) ds\leq C \int_t^{+\infty} \DD(s)ds.
\end{equation}
Note that
\begin{equation}
\label{intdd}
\int_t^{+\infty}\DD(s)ds \leq |y'(t)|.
\end{equation}
Indeed $\int_{t}^T y''(s) ds= y'(T)-y'(t)\leq -y'(t)$, which yields \eqref{intdd} in view of \eqref{calcul.y''}.
Combining \eqref{inter1} and \eqref{intdd}, we get
$$\int_t^{+\infty} \DD(s) ds\leq |y'(t)|\leq 2\|\partial_tu(t)\|_2\big(y(t)\big)^{1/2}\leq C\|\partial_tu(t)\|_2\left(\int_t^{+\infty} \DD(s) ds\right)^{1/2}$$ and thus, by \eqref{bnd.dtu},
$\left(\int_t^{+\infty} \DD(s) ds\right)^{1/2}\leq C \DD(t)$ for $t\geq 0$.
This is a differential inequality of the form $\sqrt{Y}\leq -CY'$, which can not be valid on $[0,\infty)$ if $\forall t\geq 0,\;Y>0$. The proof of \eqref{lim.L2} is complete.

\medskip
\noindent\emph{Proof of \eqref{intdd2}.}
By \eqref{bnd.dtu} and \eqref{intdd} ,
$$ \forall t\geq 0,\quad \int_t^{\infty} \DD(s)ds \leq |y'(t)|\leq C\DD(t),$$
which implies \eqref{intdd2}.
\end{proof}

\begin{proof}[Proof of Proposition \ref{prop.sur.L2}.]
\noindent\emph{Step 1. Convergence in $L^2$.} We first show that there exits $u_{\infty}\in L^2$ such that
\begin{equation}
\label{limL2}
\lim_{t\rightarrow +\infty} \|u(t)-u_{\infty}\|_2=0.
\end{equation}
Indeed we have, if $0\leq t_1<t_2$,
\begin{equation}
\label{uCauchy1}
|u(t_1,x)-u(t_2,x)|^2=\left|\int_{t_1}^{t_2} \partial_t u(t,x) dt\right|^2\leq (t_2-t_1)\int |\partial_t u(t,x)|^2 dt.
\end{equation}
Integrating \eqref{uCauchy1} in space, we get by \eqref{intdd2}
$$ \|u(t_1)-u(t_2)\|^2_2\leq |t_1-t_2|\int_{t_1}^{t_2}\|\partial_t u(t)\|_2^2dt\leq C|t_1-t_2|\int_{t_1}^{t_2}d(t)dt\leq C|t_1-t_2|e^{-ct_1}.$$
By an elementary summation argument, we obtain, taking a larger constant $C$, the bound $\|u(t_1)-u(t_2)\|_2^2\leq Ce^{-ct1}$ for $t_1<t_2$. Thus $u$ satisfies the Cauchy criterion in $L^2$ as $t\rightarrow +\infty$, which yields \eqref{limL2}.

\medskip

\noindent\emph{Step 2. End of the proof}
By \eqref{intdd2}, there exists a sequence $t_n\rightarrow \infty$ such that $\DD(t_n)$ tends to $0$. Thus, extracting a subsequence and changing $u$ into $-u$ if necessary, there exists $\lambda_{0}$,\,$x_{0}$ such that $u(t_n)$ tends to $W_{\lambda_0,x_0}$ in $\hdot$, thus in $\DDD'(\RR^N)$. In view \eqref{limL2}, $u(t_n)$ tends also to $u_{\infty}$ in $\DDD'(\RR^n)$. Thus $W_{\lambda_0,x_0}=u_{\infty}\in L^2$. This shows that $N=5$ and 
\begin{equation}
\label{limL2bis}
\lim_{t\rightarrow +\infty} \|u(t)-W_{\lambda_0,x_0}\|_2=0.
\end{equation}
Let us show
\begin{equation}
\label{Dt0}
\lim_{t\rightarrow +\infty} \DD(t)=0.
\end{equation}
If \eqref{Dt0} does not hold, there exist increasing sequences $(t_n)_n$, $(t'_n)_n$ such that $t_n<t'_n$, $\DD(t_n)\rightarrow 0$, $\DD(t'_n)=\delta_0$ and $\DD(t)<\delta_0$ for $t\in [t_n,t'_n)$. On $[t_n,t'_n]$, the modulation parameters $\mu(t)$ and $X(t)$ are well-defined. Furthermore, by \eqref{limL2bis}, $\mu(t)$ must be bounded on $\bigcup_n [t_n,t'_n]$. Thus by \eqref{intdd2} and the same argument as in the proof of \eqref{limite0}, $\alpha(t'_n)$ tends to $0$  which contradicts the estimate $\DD(t'_n)\approx \alpha(t_n')$ of Lemma \ref{lem.est.modul}. Hence \eqref{Dt0}.

Thus there exists $T>0$ such that for $t\geq T$, $\DD(t)\leq \delta_0$. By \eqref{limL2bis}, $\mu$ converges to $\lambda_0^{-1}$. In view of estimate \eqref{est.modul.3} of Lemma \ref{lem.est.modul} and the boundedness of $\mu$,
\begin{equation}
\label{bnd.deriv}
|\alpha'(t)|+|\mu'(t)|+|X'(t)|\leq C \dd(t).
\end{equation}
In view of \eqref{intdd2}, this shows as in the end of the proof of Proposition \ref{prop.CVexpu} that $\dd(t)$, $\alpha(t)$, $\mu(t)$ and $X(t)$ converges exponentially when $t\rightarrow +\infty$, which implies \eqref{CV.sur.L2}. The proof of Proposition \ref{prop.sur.L2} is complete.
\end{proof}

\section{Preliminaries on the linearized equation near $W$}
\label{sec.lin}
This section is similar to the corresponding one in the NLS case \cite[Section 5]{DuMe07P}.

Let $u$ be a solution of \eqref{CP}, defined on $[0,+\infty)$, and close to $W$. Let $h:=u-W$. Then
$$ \partial_t^2 h-\Delta h-|W+h|^{\frac{4}{N-2}}(W+h) +W^{\frac{N+2}{N-2}}=0,$$
which we rewrite as
\begin{gather}
\label{eq.h}
\partial_t^2 h+L h=R(h),\\ 
\notag
L:=-\Delta- \frac{N+2}{N-2}W^{\frac{4}{N-2}},\quad 
R(h):=|W+h|^{\frac{4}{N-2}}(W+h)-W^{\frac{N+2}{N-2}}-\frac{N+2}{N-2}W^{\frac{4}{N-2}} h.
\end{gather}
Note that
$\frac{1}{2}(Lu,u)_{L^2}=Q(u)$
where $Q$ is the quadratic form defined in \eqref{defQ}. 
\subsection{Preliminary estimates}
Recall the definition of the spaces $\ell(I)$ and $N(I)$ defined in \eqref{defS}, \eqref{defl}.
\begin{lemma}
\label{lem.est}
There exists $C>0$ such that if $f\in L^{2*}$, $I$ is a time interval and $u,v\in \ell(I)$. 
\begin{align}
\label{Lest}
&\|D_x^{1/2}(W^{\frac{4}{N-2}}u)\|_{N(I)}\leq C \left(|I|^{\frac{2}{N+1}}+|I|^{\frac{5}{2(N+1)}}\right)\|u\|_{\ell(I)}\\
\label{NLest0}
&\|R(f)\|_{\frac{2N}{N+2}}\leq C\left(\|f\|_{2^*}^{2}+\|f\|_{2^*}^{\frac{N+2}{N-2}}\right)\\
\label{NLest}
\|D_x^{1/2}(R(u)&-R(v))\|_{N(I)} \\
\notag
&\leq C\left(1+|I|^{\frac{6-N}{2(N+1)}}\right) \|u-v\|_{\ell(I)}\left(\|u\|_{\ell(I)}+\|v\|_{\ell(I)}+\|u\|^{\frac{4}{N-2}}_{\ell(I)}+\|v\|^{\frac{4}{N-2}}_{\ell(I)}\right).
\end{align}
\end{lemma}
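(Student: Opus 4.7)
The three estimates share a common philosophy: combine pointwise bounds on the nonlinearity $F(s):=|s|^{4/(N-2)}s$ via Taylor expansion (which is valid since $N\leq 5$ forces $4/(N-2)\geq 1$, so $F\in C^2$ away from $0$ with $|F''(s)|\leq C|s|^{(6-N)/(N-2)}$), the fractional Leibniz (Kato--Ponce) inequality, and H\"older's inequality in space and time. The ``budget'' for $W$ comes from $W\in L^{2^*}(\RR^N)\cap L^\infty(\RR^N)$ together with its smoothness and decay $W(x)\sim|x|^{-(N-2)}$: every power $W^\alpha$, and (by smoothness) also $D_x^{1/2}W^\alpha$, lies in $L^q(\RR^N)$ for every $q>N/(\alpha(N-2))$. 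For \eqref{NLest0}, I would Taylor expand $R(f)=\int_0^1(1-s)F''(W+sf)f^2\,ds$ and split the integration domain into $\{|f|\leq W\}$ and $\{|f|>W\}$ to get the pointwise bound $|R(f)|\leq C(W^{(6-N)/(N-2)}|f|^2+|f|^{(N+2)/(N-2)})$; H\"older in $L^{2N/(N+2)}$ with $W^{(6-N)/(N-2)}\in L^{2N/(6-N)}$ (a direct consequence of $W\in L^{2^*}$) then gives the claim.

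For \eqref{Lest}, I would apply Kato--Ponce to $D_x^{1/2}(W^{4/(N-2)}u)$, obtaining the two pieces $W^{4/(N-2)}D_x^{1/2}u$ and $D_x^{1/2}(W^{4/(N-2)})\,u$ (up to acceptable commutator errors). Choose the spatial H\"older exponents so that the $u$-factors sit in their $\ell(I)$-native spaces: $\|D_x^{1/2}u(t)\|_{L^{2(N+1)/(N-1)}_x}$ in the first piece, $\|u(t)\|_{L^{2(N+1)/(N-2)}_x}$ in the second. Matching reciprocals against $1/p=(N+3)/(2(N+1))$ (where $p$ is the $N(I)$-exponent) leaves $W^{4/(N-2)}\in L^{(N+1)/2}$ and $D_x^{1/2}(W^{4/(N-2)})\in L^{2(N+1)/5}$, both finite by the decay/regularity of $W^{4/(N-2)}\sim|x|^{-4}$. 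H\"older in time, converting the $\ell(I)$-native time exponents $2(N+1)/(N-1)$ and $2(N+1)/(N-2)$ to $p$, then produces precisely the prescribed losses $|I|^{2/(N+1)}$ and $|I|^{5/(2(N+1))}$.

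For \eqref{NLest}, I would start from
\[
R(u)-R(v)=(u-v)\int_0^1\bigl[F'(W+v+s(u-v))-F'(W)\bigr]\,ds
\]
and use $|F'(a)-F'(b)|\leq C|a-b|(|a|^{(6-N)/(N-2)}+|b|^{(6-N)/(N-2)})$ (again valid for $N\leq 5$) to obtain
\[
|R(u)-R(v)|\leq C|u-v|(|u|+|v|)\bigl(W^{(6-N)/(N-2)}+(|u|+|v|)^{(6-N)/(N-2)}\bigr),
\]
with an analogous bound for the spatial gradient. Kato--Ponce distributes $D_x^{1/2}$ across the product, producing a finite list of trilinear or $(1+4/(N-2))$-linear terms. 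Each term containing the factor $W^{(6-N)/(N-2)}\in L^{2(N+1)/(6-N)}_x$ balances spatially with the $\ell(I)$-native exponents for the $u,v$ factors; time H\"older then costs exactly $|I|^{(6-N)/(2(N+1))}$, since
\[
\tfrac{1}{p}-\tfrac{N-1}{2(N+1)}-\tfrac{N-2}{2(N+1)}=\tfrac{6-N}{2(N+1)}.
\]
By contrast, the $W$-free pieces fit into an \emph{exact} H\"older product: $(|u|+|v|)^{4/(N-2)}\in L^{(N+1)/2}_{t,x}$ (from $u\in S(I)$) and $D_x^{1/2}(u-v)\in L^{2(N+1)/(N-1)}_{t,x}$ combine with $2/(N+1)+(N-1)/(2(N+1))=1/p$, so no $|I|$-factor is needed---this explains the ``$1$'' in $1+|I|^{(6-N)/(2(N+1))}$.

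The principal obstacle is the exponent bookkeeping in \eqref{NLest}: one must verify for each Kato--Ponce term that the multiplier $W^{\alpha}$ (or its fractional derivative) sits in the exact $L^q$ dictated by the H\"older budget. The case $N=5$ is the tightest, since $W^{1/3}\sim|x|^{-1}$ decays very slowly, but the required integrability $2(N+1)/(6-N)=12$ is still strictly above the threshold $N/(6-N)=5$, so $W^{1/3}\in L^{12}$ and the estimate closes; the cases $N=3,4$ are more generous.
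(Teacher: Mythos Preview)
Your treatment of \eqref{Lest} and \eqref{NLest0} is correct and matches the paper's (the paper in fact skips \eqref{NLest0}). Your approach to \eqref{NLest}---writing $R(u)-R(v)=\tfrac{N+2}{N-2}(u-v)\,I(u,v)$ with $I(u,v)=\int_0^1\bigl[|W+w_\theta|^{4/(N-2)}-W^{4/(N-2)}\bigr]\,d\theta$ and applying fractional Leibniz once---is also the paper's first step, and for $N\in\{3,4\}$ it closes exactly as you say, since $4/(N-2)\in\{2,4\}$ makes $I(u,v)$ a polynomial in $W,u,v$ and Kato--Ponce applies term by term.

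The gap is at $N=5$, where $4/(N-2)=4/3$ and $I(u,v)$ is built from $|W+h|^{4/3}-W^{4/3}$. This is not a product or polynomial in $W$ and $h$, so one cannot ``distribute $D_x^{1/2}$ across the product'' to reach a factor $W^{1/3}$: no such algebraic factorization exists, and the fractional chain rule (for $F\in C^1(\RR)$) only handles compositions $F(g)$ with an $x$-independent $F$, not a two-variable expression in $(W,h)$. Applying the chain rule to $|W+h|^{4/3}$ and $W^{4/3}$ separately and subtracting loses the smallness in $h$: one picks up the finite constant $\|D_x^{1/2}W^{4/3}\|_{12/5}$, which feeds back as a term $C\|u-v\|_{\ell(I)}$ with no factor $\|u\|_{\ell(I)}+\|v\|_{\ell(I)}$, violating \eqref{NLest} and, more to the point, destroying the contraction in \S\ref{sub.contract}. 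Your final paragraph identifies the integrability $W^{1/3}\in L^{12}(\RR^5)$, which is necessary downstream but is not the obstruction.

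The paper's remedy is to rewrite
\[
|W+h|^{4/3}-W^{4/3}=W^{4/3}F(W^{-1}h)+|h|^{4/3},\qquad F(s)=|1+s|^{4/3}-1-|s|^{4/3},
\]
so that $F\in C^1(\RR)$ with $F'$ bounded and the chain rule is legitimate on each piece. The price is the factor $W^{-1}$, which lies in no $L^p$; the paper then localizes dyadically to annuli $|x|\sim 2^k$, where $W^{-1}\approx 2^{3k}$ is balanced by the outer $W^{4/3}\approx 2^{-4k}$, and sums a convergent geometric series. This dyadic step is the missing ingredient in your sketch.
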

We postpone the proof of Lemma \ref{lem.est} to Appendix \ref{appendixest}.

We will also need the following version of Lemma \ref{lem.est} with exponentially decreasing norms.
\begin{corol}
\label{corol.est.expo}
Let $u,v\in \ell(t_0,+\infty)$, $t_0\in \RR$, such that for some $\gamma>0$, and some constant $M>0$,
\begin{equation*}
\forall t\geq t_0,\quad \|u\|_{\ell(t,+\infty)}+\|v\|_{\ell(t,+\infty)}\leq Me^{-\gamma t}.
\end{equation*}
Then there exists $C=C(\gamma,M)>0$ such that 
\begin{gather*}
\|D_x^{1/2}(W^{\frac{4}{N-2}}u)\|_{N(t,+\infty)}\leq C e^{-\gamma t},\quad 
\|R(u(t))\|_{\frac{2N}{N+2}}\leq Ce^{-2\gamma t}\\
\forall t\geq t_0,\quad \|D_x^{1/2}(R(u)-R(v))\|_{N(t,+\infty)} \leq C e^{-\gamma t}\|u-v\|_{\ell(t,+\infty)}.
\end{gather*}
\end{corol}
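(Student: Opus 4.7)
The corollary is a dyadic unit-interval reduction to Lemma~\ref{lem.est}, followed by a geometric-series summation in the $L^{p}$-in-time norm $N(I)$.

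First I would partition $[t,+\infty)=\bigsqcup_{k\ge 0}I_k$ with $I_k:=[t+k,t+k+1]$, so that $|I_k|=1$ makes all the polynomial length factors $|I|^{\alpha}$ appearing in \eqref{Lest} and \eqref{NLest} collapse to a universal constant; monotonicity of the $\ell$-norm together with the standing hypothesis then gives
\[
\|u\|_{\ell(I_k)}+\|v\|_{\ell(I_k)}\;\le\;\|u\|_{\ell(t+k,+\infty)}+\|v\|_{\ell(t+k,+\infty)}\;\le\;Me^{-\gamma(t+k)}.
\]

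Second, I would exploit that $N(I)=L^{p}(I\times\RR^N)$ with $p:=2(N+1)/(N+3)$ is $L^{p}$-additive in the time variable over disjoint intervals, i.e.\ $\|F\|_{N(t,+\infty)}^{p}=\sum_{k\ge 0}\|F\|_{N(I_k)}^{p}$. Applying \eqref{Lest} interval-by-interval gives $\|D_x^{1/2}(W^{4/(N-2)}u)\|_{N(I_k)}\le CMe^{-\gamma(t+k)}$, and summing the convergent geometric series $\sum_{k}e^{-\gamma pk}$ then taking a $p$-th root yields the first estimate. The Lipschitz estimate follows the same template from \eqref{NLest}: on each $I_k$ the ``nonlinear'' factor $\|u\|_{\ell(I_k)}+\|v\|_{\ell(I_k)}+\|u\|^{4/(N-2)}_{\ell(I_k)}+\|v\|^{4/(N-2)}_{\ell(I_k)}$ is uniformly bounded by $Ce^{-\gamma(t+k)}$ and can be pulled outside the $\ell^{p}$ sum, while the surviving factor $\|u-v\|_{\ell(I_k)}$ is reassembled into $\|u-v\|_{\ell(t,+\infty)}$ via $L^{p}$-additivity of the time norms.

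For the middle pointwise-in-$t$ estimate I would apply \eqref{NLest0} at fixed $t$ with $f=u(t)$, obtaining
\[
\|R(u(t))\|_{2N/(N+2)}\;\le\;C\|u(t)\|_{2^*}^{2}+C\|u(t)\|_{2^*}^{(N+2)/(N-2)},
\]
and note that the hypothesis combined with Sobolev embedding and the pointwise $\hdot$ regularity carried by elements of $\ell(I)$ (through the $D_x^{1/2}$ component and Strichartz continuity) forces $\|u(t)\|_{2^*}\le Ce^{-\gamma t}$, so both terms are bounded by $Ce^{-2\gamma t}$. The main obstacle, which is otherwise a bookkeeping matter, is the mild Hölder reconciliation in the Lipschitz step: the three terms defining $\|\cdot\|_{\ell(I_k)}$ carry different space-time Lebesgue exponents ($2(N+1)/(N-2)$ and $2(N+1)/(N-1)$), neither of which equals $p$, so one must insert one Hölder inequality on the unit interval $I_k$ (whose finite length makes this harmless) before invoking $L^{p}$-additivity.
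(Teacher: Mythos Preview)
Your approach is exactly the paper's: apply Lemma~\ref{lem.est} on unit intervals and sum the resulting geometric series, which the paper packages abstractly as Claim~\ref{summation}. One simplification: the ``H\"older reconciliation'' you flag in the Lipschitz step is unnecessary, since the trivial monotonicity bound $\|u-v\|_{\ell(I_k)}\le\|u-v\|_{\ell(t,+\infty)}$ lets you pull that factor outside the sum before performing the $\ell^p$-summation of the exponentially decaying coefficients $e^{-\gamma(t+k)}$.
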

\begin{proof}
This is an immediate consequence of Lemma \ref{lem.est} and the following elementary Claim, which is Claim 5.8 in \cite{DuMe07P}:
\begin{claim}[Sums of exponential]
\label{summation}
Let $t_0>0$, $p\in [1,+\infty[$, $a_0 \neq 0$, $E$ a normed vector space, and $f\in L^p_{\rm loc}(t_0,+\infty;E)$ such that 
\begin{equation}
\label{small.tau}
\exists \tau_0>0,\; \exists C_0>0, \;\forall t\geq t_0, \quad \|f\|_{L^p(t,t+\tau_0,E)}\leq C_0e^{a_0 t}.
\end{equation}
Then for $t\geq t_0$,
\begin{equation}
\label{conclu.summation}
\|f\|_{L^p(t,+\infty,E)}\leq \frac{C_0e^{a_0 t}}{1-e^{a_0\tau_0}}\; \text{if } a_0<0;\quad \|f\|_{L^p(t_0,t,E)}\leq  \frac{C_0e^{a_0 t}}{1-e^{-a_0\tau_0}}\; \text{if }a_0>0.
\end{equation}
\end{claim}

\end{proof}
\begin{corol}[Strichartz estimates for the perturbative equation]
\label{Stri.expo}
Let $h$ be a solution of \eqref{eq.h} on $[0,\infty)$ such that
$$ \|\nabla h(t)\|_{2}+\|\partial_t h(t)\|_2\leq Ce^{-\gamma t}.$$
Then
\begin{gather*}
%\label{conclu.Stri.expo}
\|h\|_{\ell(t,+\infty)}+\|D_x^{1/2}W^{\frac{4}{N-2}} h\|_{N(t,+\infty)} \leq Ce^{-\gamma t},\\ \|R(h(t))\|_{\frac{2N}{N+2}}+\|D_x^{1/2} R(h)\|_{N(t,+\infty)}\leq Ce^{-2\gamma t}.
\end{gather*}
\end{corol}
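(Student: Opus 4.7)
The plan is to regard equation \eqref{eq.h} as the inhomogeneous free wave equation
\[
\partial_t^2 h - \Delta h = \tfrac{N+2}{N-2} W^{\frac{4}{N-2}} h + R(h),
\]
apply Proposition \ref{prop.Strichartz} on a short window $(t,t+\tau_0)$, use Lemma \ref{lem.est} to treat both forcing terms perturbatively, and then promote the resulting short-window bound to an infinite-interval bound via the summation Claim \ref{summation}. Since $u := W + h$ solves \eqref{CP}, Proposition \ref{exi.uni} guarantees that $h \in \ell_{\rm loc}$, so $\|h\|_{\ell(t,t+\tau_0)}$ is finite and continuous in $t$.

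By Proposition \ref{prop.Strichartz} (translated to the interval $(t,t+\tau_0)$ by time-translation invariance) together with \eqref{Lest} and \eqref{NLest} (the latter with $v=0$, noting $R(0)=0$),
\[
\|h\|_{\ell(t,t+\tau_0)} \leq C\bigl(\|\nabla h(t)\|_2 + \|\partial_t h(t)\|_2\bigr) + C\,\omega(\tau_0)\,\|h\|_{\ell(t,t+\tau_0)} + C\,\|h\|_{\ell(t,t+\tau_0)}\bigl(\|h\|_{\ell(t,t+\tau_0)} + \|h\|_{\ell(t,t+\tau_0)}^{\frac{4}{N-2}}\bigr),
\]
where $\omega(\tau_0)\to 0$ as $\tau_0\to 0$. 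Fix $\tau_0$ so that $C\omega(\tau_0) \leq 1/2$. For $t$ large enough that the data $\|\nabla h(t)\|_2 + \|\partial_t h(t)\|_2 \leq C e^{-\gamma t}$ is sufficiently small, a standard small-data fixed-point argument (the contraction being \eqref{NLest}) produces a unique small $\ell$-norm solution of the Cauchy problem on $(t,t+\tau_0)$; the uniqueness in Proposition \ref{exi.uni} forces it to coincide with $h$, giving
\[
\|h\|_{\ell(t,t+\tau_0)} \leq 4C\bigl(\|\nabla h(t)\|_2 + \|\partial_t h(t)\|_2\bigr) \leq Ce^{-\gamma t},\qquad t\geq T_0,
\]
for some $T_0$. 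Each of the three $L^p$-in-time components of $\|\cdot\|_{\ell}$ then satisfies the hypothesis of Claim \ref{summation} with $a_0 = -\gamma < 0$, so summing over consecutive windows of length $\tau_0$ yields $\|h\|_{\ell(t,+\infty)} \leq C e^{-\gamma t}$ for $t \geq T_0$. The same bound holds on $[0,T_0]$ after enlarging $C$, using $h\in\ell(0,T_0)$.

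With $\|h\|_{\ell(t,+\infty)} \leq Ce^{-\gamma t}$ in hand, the remaining estimates are immediate from Corollary \ref{corol.est.expo}: it directly gives $\|D_x^{1/2}(W^{\frac{4}{N-2}}h)\|_{N(t,+\infty)} \leq Ce^{-\gamma t}$, and, taking $u=h$ and $v=0$, $\|D_x^{1/2}R(h)\|_{N(t,+\infty)} \leq Ce^{-\gamma t}\|h\|_{\ell(t,+\infty)} \leq Ce^{-2\gamma t}$. The pointwise bound $\|R(h(t))\|_{2N/(N+2)} \leq Ce^{-2\gamma t}$ follows from \eqref{NLest0} and the Sobolev embedding $\|h(t)\|_{2^*} \leq C\|\nabla h(t)\|_2 \leq Ce^{-\gamma t}$ (the quadratic term dominates the higher power for $t$ large; for $t\in[0,T_0]$ we absorb into $C$). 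The only delicate point is the closure of Step 1: one needs both $\omega(\tau_0)$ small (to absorb the potential contribution) and the energy norm small at large $t$ (to absorb the nonlinear contribution); the compact initial range $[0,T_0]$ is handled by a crude estimate, and the bulk of the decay is extracted entirely from the exponential smallness of $\|\nabla h(t)\|_2 + \|\partial_t h(t)\|_2$.
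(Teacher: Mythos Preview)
Your proof is correct and follows essentially the same approach as the paper's: rewrite \eqref{eq.h} as a free wave equation with forcing $\tfrac{N+2}{N-2}W^{4/(N-2)}h+R(h)$, apply Strichartz and Lemma \ref{lem.est} on a short window $(t,t+\tau_0)$, absorb the linear potential term by choosing $\tau_0$ small and the nonlinear term by smallness of the data, then sum via Claim \ref{summation}; the remaining bounds then follow from Corollary \ref{corol.est.expo} and \eqref{NLest0}. The paper presents the closing step as a ``standard argument'' (bootstrap/continuity), whereas you phrase it as a small-data fixed point plus uniqueness, but this is the same mechanism.
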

\begin{proof}
The proof is the same than the one of \cite[Lemma 5.6]{DuMe07P}. We sketch it for the sake of completness. Note that all desired estimates are, by Corollary \ref{corol.est.expo}, a consequence of 
\begin{equation*}
\|h\|_{\ell(t,+\infty)}\leq Ce^{-\gamma t},
\end{equation*}
so that we only need to show this last estimate
We have
$$ \partial_t^2 h-\Delta h=\frac{N+2}{N-2} W^{\frac{4}{N-2}}h+R(h).$$
Let $t>0$ and $\tau\in (0,1)$. First note that $W+h$ is solution of \eqref{CP}, and thus, by the standard Cauchy problem theory for \eqref{CP}, $\|h\|_{\ell(t,t+\tau)}$ is finite. By Strichartz inequality (Proposition \ref{prop.Strichartz}) and Lemma \ref{lem.est}, 
\begin{align*}
\|h\|_{\ell(t,t+\tau)}&\leq C\left( \|D_{x}^{1/2}W^{\frac{4}{N-2}}h\|_{N(t,t+\tau)}+\|D_x^{1/2}R(h)\|_{N(t,t+\tau)}+e^{-{\gamma t}}\right)\\
&\leq C\left( \tau^{\frac{2}{N+1}} \|h\|_{\ell(t,t+\tau)}+\|h\|^{2}_{\ell(t,t+\tau)}+\|h\|^{\frac{N+2}{N-2}}_{\ell(t,t+\tau)}+e^{-{\gamma t}}\right).
\end{align*}
By a standard argument (see the proof of \cite[Lemma 5.7]{DuMe07P}), we deduce from the preceding inequality, $\tau$ is small, 
$$\|h\|_{\ell(t,t+\tau)}\leq Ce^{-\gamma t}.$$
The conclusion follows from Claim \ref{summation}.
\end{proof}
\subsection{Spectral theory for the linearized operator}
The following Proposition sums up spectral properties of $L$ (see \cite{KrSc05}, \cite{KrScTa07P} for the radial case in $\RR^3$).
\begin{prop}
\label{prop.spectral}
The operator $L$ on $L^2$ with domain $H^2$ is a self-adjoint operator with essential spectrum $[0,+\infty)$, no positive eigenvalue and only one negative eigenvalue $-e_0^2$, with a radial, exponentially decreasing, smooth eigenfunction $\YYY$. Furthermore, if 
$$ G_{\bot}:=\left\{ f\in \hdot, \quad \int \YYY f=\int \nabla f\cdot\nabla \tW=\int \nabla f\cdot \nabla W_1=\ldots=\int \nabla f\cdot\nabla W_N=0\right\}.$$
Then there exists $c_Q>0$ such that
\begin{equation}
\label{QsurG}
\forall f\in G_{\bot}, \quad Q(f)\geq c_Q\|\nabla f\|^2_2.
\end{equation}
\end{prop}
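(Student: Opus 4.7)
The plan is to treat $L = -\Delta - V$ with $V = \frac{N+2}{N-2}W^{4/(N-2)}$ as a Schrödinger operator with a smooth, radial, positive potential decaying like $|x|^{-4}$, and to obtain its spectrum in three stages: the abstract functional analytic setup, the precise counting of the discrete spectrum, and finally the coercivity estimate \eqref{QsurG}.

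First, since $W \in \hdot$ and $V$ is bounded, smooth and satisfies $V(x) \lesssim |x|^{-4}$ at infinity, one checks that $V \in L^{N/2} \cap L^\infty$ and that multiplication by $V$ is a relatively compact perturbation of $-\Delta$ on $L^2$. Standard perturbation theory then yields self-adjointness on the domain $H^2$, and Weyl's theorem gives $\sigma_{\mathrm{ess}}(L) = \sigma_{\mathrm{ess}}(-\Delta) = [0, +\infty)$. The absence of positive eigenvalues is a consequence of Kato's theorem (or of classical Agmon–Kato–Kuroda results), applicable because $V$ decays faster than $|x|^{-2}$.

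Next, for the negative spectrum and the kernel, I would decompose $L^2(\RR^N) = \bigoplus_{k \geq 0} L^2_{\mathrm{rad}}(\RR^+, r^{N-1}dr) \otimes \HHH_k$ along spherical harmonics. On each sector, $L$ acts as a Sturm–Liouville operator $-\partial_r^2 - \tfrac{N-1}{r}\partial_r + \tfrac{k(k+N-2)}{r^2} - V(r)$, which is monotone increasing in $k$ in the sense of quadratic forms. The strategy is then:
\begin{enumerate}
\item On the radial sector $k=0$, note that $\tW$ (obtained by differentiating the equation $-\Delta W = W^{(N+2)/(N-2)}$ in $\lambda$ at $\lambda = 1$) lies in the kernel, and it has exactly one node. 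By Sturm oscillation theory, $L_0$ has exactly one simple negative eigenvalue $-e_0^2$ below the kernel, whose eigenfunction $\YYY$ is nodeless (hence radial and may be taken positive), smooth by elliptic regularity, and exponentially decaying by Agmon estimates since the potential vanishes at infinity.
\item On the sector $k = 1$, the functions $W_j = c_j \partial_{x_j} W$ lie in the kernel and are nodeless in the radial variable (they are positive multiples of $r \mapsto -W'(r)/r > 0$). By Sturm oscillation they generate the ground state of the $k=1$ problem, so there is no negative eigenvalue in this sector.
\item On the sectors $k \geq 2$, monotonicity in $k$ together with the previous step forces the spectrum to be nonnegative, and a direct verification shows that $0$ is not achieved.
\end{enumerate}
This gives a single negative eigenvalue $-e_0^2$ and an $(N+1)$-dimensional kernel spanned precisely by $\tW, W_1, \ldots, W_N$. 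This step is the main obstacle of the proof; the paper defers to \cite{KrSc05, KrScTa07P} for the radial three-dimensional case, and the argument above adapts the same ODE analysis to general dimension and incorporates the non-radial sectors.

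Finally, I establish the coercivity \eqref{QsurG} by a standard compactness argument. Note that $2Q(f) = (Lf, f)_{L^2}$, and by construction $G_\bot$ is $L^2$-orthogonal to $\YYY$ and $\hdot$-orthogonal to $\tW, W_1, \ldots, W_N$, which spans the non-positive spectral subspace of $L$; hence $Q \geq 0$ on $G_\bot$. To upgrade to strict coercivity, suppose by contradiction that there is a sequence $f_n \in G_\bot$ with $\|\nabla f_n\|_2 = 1$ and $Q(f_n) \to 0$. Extract a subsequential weak limit $f_\infty \in G_\bot$ in $\hdot$. Since $W^{4/(N-2)} \in L^{N/2}$ and decays, the map $f \mapsto \int W^{4/(N-2)} f^2$ is weakly continuous on $\hdot$, so
\begin{equation*}
\int W^{\frac{4}{N-2}} f_n^2 \longrightarrow \int W^{\frac{4}{N-2}} f_\infty^2.
\end{equation*}
If $f_\infty = 0$, then $Q(f_n) \to \tfrac{1}{2}$, a contradiction. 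Otherwise $f_\infty \neq 0$ satisfies $Q(f_\infty) \leq 0$; combined with $Q \geq 0$ on $G_\bot$ and Lagrange multipliers, $f_\infty$ must lie in the kernel of $L$, hence in $\mathrm{span}\{\tW, W_1, \ldots, W_N\}$, contradicting the orthogonality conditions defining $G_\bot$. This proves \eqref{QsurG}.
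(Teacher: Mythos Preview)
Your proof is correct, but it takes a genuinely different route from the paper's. The paper never carries out the spherical-harmonic decomposition or Sturm oscillation analysis. Instead, it imports Claim~\ref{coercivity} (the coercivity of $Q$ on $H^{\bot}=\{W,\tW,W_1,\ldots,W_N\}^{\bot}$ in $\hdot$, due to Rey) as a black box. From that single input it gets everything by linear algebra: if $Q(f)\le 0$ for some nonzero $f\in G_{\bot}$, then $Q\le 0$ on the $(N+3)$-dimensional space $\vect\{f,\YYY,\tW,W_1,\ldots,W_N\}$, which must meet $H^{\bot}$ (codimension $N+2$) nontrivially, contradicting Claim~\ref{coercivity}. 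The same dimension count yields uniqueness of the negative eigenvalue \emph{a posteriori}. The compactness upgrade to \eqref{QsurG} is then done via Cauchy--Schwarz for the nonnegative form $Q$ on $G_{\bot}$ (rather than your minimization/Lagrange-multiplier argument, though the two are equivalent).

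What each approach buys: yours is self-contained --- you reprove Rey's input by hand via the ODE analysis, and you identify the kernel of $L$ explicitly along the way. The paper's argument is shorter and softer, and it completely avoids the $\hdot$ versus $L^2$ issue that you brush past: your sentence ``$G_\bot$ is $L^2$-orthogonal to $\YYY$ \ldots\ hence $Q\ge 0$ on $G_\bot$'' invokes the $L^2$ spectral decomposition of $L$, but in dimensions $N=3,4$ elements of $G_\bot\subset\hdot$ need not lie in $L^2$ (and $\tW,W_j$ are threshold resonances rather than $L^2$ zero modes). This is easily repaired --- $G_{\bot}\cap L^2$ is dense in $G_{\bot}$ for the $\hdot$ topology, and your Sturm count shows $L$ has a single negative $L^2$-eigenvalue, so $Q\ge 0$ on $G_{\bot}\cap L^2$ and hence on $G_{\bot}$ by continuity --- but it deserves a sentence.
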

\begin{remark}
The proposition shows 
$$ \{u\in \hdot,\; Lu=0\}=\vect\{\tW,W_1,\ldots,W_N\}.$$
Indeed, the inclusion $\supset$ is already known. For the other inclusion, note that if the dimension of the space $\{u\in \hdot,\; Lu=0\}$ was strictly higher than $N+1$, we could find $\ZZZ\in \hdot$, $\ZZZ\neq 0$, such that $L\ZZZ=0$, and orthogonal to $\tW,W_1,\ldots,W_N$. By self-adjointness of $L$, $\int \ZZZ\YYY=-\frac{1}{e_0}\int \ZZZ L\YYY=0$. Thus on one hand $\ZZZ\in G_{\bot}\setminus \{0\}$, and on the other $Q(\ZZZ)=(L\ZZZ,\ZZZ)=0$ contradicting \eqref{QsurG}.
\end{remark}

\begin{proof}[Proof of the proposition]
\emph{Step 1. Existence of a negative eigenvalue.}
The fact that $L$ is a self-adjoint operator on $L^2$ with domain $H^2$ is well-known. Note that $L=-\Delta - \frac{N+2}{N-2}W^{\frac{4}{N-2}}$ with $W^{\frac{4}{N-2}}\approx 1/|x|^4$ for large $x$. In particular, $W^{\frac{4}{N-2}}$ is bounded and tends to $0$ at infinity, which shows that the essential spectrum of $L$ is $[0,\infty)$ (see e.g. \cite[Theorem XIII.14]{ReSi.T4}). Furthermore, $|x|W^{\frac{4}{N-2}}$ tends to $0$ at infinity, so that by Kato's Theorem \cite[Theorem XII.58]{ReSi.T4}, $L$ does not have any positive eigenvalue. 

By the equation $-\Delta W=W^{\frac{N+2}{N-2}}$, we have $LW=-\frac{4}{N-2}W^{\frac{N+2}{N-2}}$ and thus
$$\int LW W=-\int \frac{4}{N-2} W^{2^*}<0.$$
By approximating $W$ by compactly supported functions 
$$\inf_{\|u\|_2=1,u\in H^2} \int Lu u<0,$$ 
Thus $L$ has at least one negative eigenvalue $-e_0^2$, and the corresponding eigenfunction $\YYY$ is exponentially decreasing by Agmon estimate. We chose $-e_0^2$ to be the first eigenvalue of $L$, which implies that $\YYY$ is radial and $-e_0^2$ is a simple eigenvalue\par 

\noindent\emph{Step 2. Proof of \eqref{QsurG}.}
We first show 
\begin{equation}
\label{QsurG1}
\forall f\in G_{\bot}, \quad Q(f)>0.
\end{equation}
If not  $\exists f\in G_{\bot},\quad Q(f)\leq 0.$
Let $g\in \vect\left\{ f,\YYY,\tW,W_1,\ldots, W_N\right\}$. Taking into account that $L \tW=LW_1=\ldots=LW_N=0$, and that $\int L\YYY f=-e_0^2\int \YYY f=0$ we get
$$ Q(g)=\int Lg\,g=\int L(\alpha \YYY+\beta f)(\alpha \YYY+\beta f)=\beta^2 Q(f)-\alpha^2 e_0^2\int \YYY^2\leq 0.$$
Note that $\vect\left\{ f,\YYY,\tW,W_1,\ldots, W_N\right\}$ is a subspace of $\hdot$ of dimension $N+3$, whereas $H^{\bot}=\vect\{W,\tW,W_1,W_2,\ldots,W_N\}^{\bot}$ (the orthogonal is taken in $\hdot$) is of codimension $N+2$ in $\hdot$. Thus there exists a nonzero $g\in \vect\left\{ f,\YYY,\tW,W_1,\ldots, W_N\right\}\cap H^{\bot}$. By Claim \ref{coercivity}, $Q(g)>0$, whereas we have just shown that $Q(g)\leq 0$ yielding a contradiction. The proof of \eqref{QsurG1} is complete.

We now turn to the proof of \eqref{QsurG}. We argue again by contradiction. If \eqref{QsurG} does not hold, there exists a sequence $(f_n)$ such that
\begin{equation}
\label{absurdfn}
f_n\in G_{\bot},\quad \|\nabla f_n\|_{2}=1,\quad Q(f_n)\underset{n\rightarrow \infty}{\longrightarrow}0.
\end{equation}
Extracting a subsequence from $(f_n)$, we may assume 
$$ f_n \rightharpoonup f\text{ in  } \hdot.$$
The weak convergence of $f_n\in G_{\bot}$ to $f$ implies that $f\in G_{\bot}$. 
By Cauchy-Schwarz inequality for the positive quadratic form $Q$ on $G_{\bot}$, we get $\sqrt{Q(f_n)Q(f)}\geq \left|\int Lf\,f_n\right|$. Thus by \eqref{absurdfn},
$$ Q(f)=\lim_{n\rightarrow +\infty} \int Lf\,f_n=0.$$
As $f\in G_{\bot}$, \eqref{QsurG1} shows that $f=0$ and thus
$$ f_n \rightharpoonup 0 \text{ in }\hdot.$$
Now, by compactness $\int W^{\frac{4}{N-2}} |f_n|^2$ tends to $0$. Using that by \eqref{absurdfn}, $Q(f_n)$ tends to $0$, we get that $\|\nabla f_n\|_2$ tends to $0$, contradicting \eqref{absurdfn}. The proof of \eqref{QsurG} is complete.

\medskip

\noindent\emph{Step 3. Uniqueness of the negative eigenvalue.}
Assume that $L$ has a second eigenfunction $\YYY_1$, with eigenvalue $-e_1^2\leq 0$. As $-e_0^2$ is the first eigenvalue of $L$, we have that $-e_0^2<-e_1^2$ and $\int \YYY\YYY_1=0$. The same argument than above shows that 
$$ \forall f\in \vect\{ \YYY,\YYY_1,\tW,W_1,\ldots W_N\},\quad Q(f)\leq 0,$$
which yields a subspace of $\hdot$ of dimension $N+3$ where $Q$ is nonpositive, contradicting the fact that $Q$ is positive on the subspace $H^{\bot}$, which is of codimension $N+2$ in $\hdot$.
\end{proof}
In the sequel, we will chose $\YYY$ such that 
\begin{equation}
\label{normalisation2}
\int \YYY^2=1.
\end{equation}
\subsection{Properties of the nonhomogeneous linearized equation}

Let $t_0\geq 0$. We are now interested by the following problem
\begin{equation}
\label{eq.linear}
 \partial_t^2 h+L h=\eps, \quad t\geq t_0, 
\end{equation}
where $h\in C^0\big([t_0,+\infty),\hdot\big)$, $\partial_t h\in C^0\big([t_0,+\infty),L^2\big)$, $\eps\in C^0\left([t_0,+\infty),L^{\frac{2N}{N+2}}\right)$ and $D_x^{1/2}\eps\in N(t_0,+\infty)$ (see \eqref{defS} for the definition of $N(t_0,+\infty)$).
\begin{prop}
\label{prop.linear}
Let $h$ and $\eps$ be as above. Assume that for some constant $c_0,c_1$ such that $0<c_0<c_1$,
\begin{align}
\label{hyp.h}
\|\partial_t h(t)\|_2+\|\nabla h(t)\|_2&\leq Ce^{-c_0t}\\
\label{hyp.eps}
\|\eps(t)\|_{\frac{2N}{N+2}} +\|D_x^{1/2}\eps\|_{N(t,+\infty)}&\leq Ce^{-c_1t}.
\end{align}
Then, if $c_1^-$ is any arbitrary number $<c_1$.
\begin{itemize}
\item if $c_1>e_0$, there exists $A\in \RR$ such that
\begin{equation}
\label{dec.h1}
\left\|\partial_t \left(h(t)-Ae^{-e_0t}\YYY\right)\right\|_2+\left\|\left(\nabla (h(t)-Ae^{-e_0t}\YYY\right)\right\|_2\leq Ce^{-c_1^- t};
\end{equation}
\item if $c_1\leq e_0$, 
\begin{equation}
\label{dec.h2}
\|\partial_t h(t)\|_2+\|\nabla h(t)\|_2\leq Ce^{-c_1^- t}.
\end{equation}
\end{itemize}
\end{prop}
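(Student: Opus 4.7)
My plan is to project the equation onto the negative-eigenfunction direction $\YYY$, producing a scalar ODE whose stable and unstable modes are $e^{\pm e_0 t}$; the constant $A$ of \eqref{dec.h1} will emerge as the coefficient of the stable mode. I set $\alpha(t):=\int h(t,x)\YYY(x)\,dx$. Since $\YYY\in\SSS(\RR^N)$, H\"older together with \eqref{hyp.h}--\eqref{hyp.eps} gives $|\alpha(t)|\le Ce^{-c_0 t}$ and $|\eps_\YYY(t)|\le Ce^{-c_1 t}$, where $\eps_\YYY(t):=\int \eps(t,x)\YYY(x)\,dx$. Pairing \eqref{eq.linear} against $\YYY$, using self-adjointness of $L$ and $L\YYY=-e_0^2\YYY$, produces
\begin{equation*}
 \alpha''(t)-e_0^2\alpha(t)=\eps_\YYY(t).
\end{equation*}
The a priori decay $\alpha(t)\to 0$ forces the $e^{e_0 t}$-coefficient in the general solution to vanish, so $\alpha(t)=Ae^{-e_0 t}+\alpha_p(t)$ for a unique $A\in\RR$ and a particular solution $\alpha_p$ to be built by variation of parameters.

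For $c_1>e_0$, I take
\begin{equation*}
 \alpha_p(t)=\frac{1}{2e_0}\int_t^{+\infty}\!\bigl(e^{e_0(s-t)}-e^{-e_0(s-t)}\bigr)\eps_\YYY(s)\,ds,
\end{equation*}
which is absolutely convergent precisely because $c_1>e_0$ and satisfies $|\alpha_p(t)|\le Ce^{-c_1 t}$ by splitting the two integrals. For $c_1\le e_0$, I use the mixed Green's function
\begin{equation*}
 \alpha_p(t)=-\frac{e^{-e_0 t}}{2e_0}\!\int_{t_0}^{t}\!e^{e_0 s}\eps_\YYY(s)\,ds+\frac{e^{e_0 t}}{2e_0}\!\int_t^{+\infty}\!e^{-e_0 s}\eps_\YYY(s)\,ds,
\end{equation*}
giving $|\alpha_p(t)|\le Ce^{-c_1^{-} t}$ for any chosen $c_1^{-}<c_1$ (the logarithmic factor appearing at the resonant value $c_1=e_0$ is absorbed into $c_1^{-}$). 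Now set $\tilde h(t):=h(t)-A e^{-e_0 t}\YYY$; since $e^{-e_0 t}\YYY$ solves the homogeneous equation, $\tilde h$ still satisfies \eqref{eq.linear}, and $\int\tilde h(t)\YYY\,dx=\alpha_p(t)$. In Case 2 the subtracted term has $\hdot\times L^2$-norm bounded by $Ce^{-e_0 t}\le Ce^{-c_1^{-} t}$, so proving \eqref{dec.h2} for $\tilde h$ is equivalent to proving it for $h$.

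The remaining task, which I expect to be the main obstacle, is to upgrade the a priori bound $\|(\nabla\tilde h(t),\partial_t\tilde h(t))\|_{\hdot\times L^2}=O(e^{-c_0 t})$ to the sharp rate $c_1^{-}$. I plan to complete the full spectral decomposition
\begin{equation*}
 \tilde h(t)=\alpha_p(t)\YYY+\sum_{Z\in\{\tW,W_1,\ldots,W_N\}}\beta_Z(t)\,Z+\tilde h_\bot(t),\qquad \tilde h_\bot(t)\in G_\bot,
\end{equation*}
where the $\YYY$-component is sharp by construction. Each kernel coefficient $\beta_Z$ will be defined through the $L^2$-pairing $\int\partial_t\tilde h\,Z\,dx$ when $Z\in L^2$ (always for $W_j$, and also for $\tW$ when $N=5$), and otherwise through the dual pairing against $\tfrac{N+2}{N-2}W^{4/(N-2)}Z=-\Delta Z\in L^2$ (which is integrable at infinity since $Z\sim|x|^{2-N}$). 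Differentiating in time and using $LZ=0$ after a careful integration by parts yields an identity of the form $\beta_Z'(t)=\int\eps\,Z\,dx+\mathrm{(lower\ order\ in\ }\tilde h\mathrm{)}$; the a priori decay then gives crude bounds on the $\beta_Z$, and a bootstrap in which the lower-order terms are re-expressed via the decomposition itself (a triangular coupled linear system in $\alpha_p,\beta_Z,\tilde h_\bot$) sharpens the rate step by step up to $c_1^{-}$. On the remainder $\tilde h_\bot\in G_\bot$, Proposition \ref{prop.spectral} makes the energy $\tfrac12\|\partial_t\tilde h_\bot\|_2^2+Q(\tilde h_\bot)$ equivalent to $\|(\nabla\tilde h_\bot,\partial_t\tilde h_\bot)\|_{\hdot\times L^2}^2$; applying the Strichartz estimate of Proposition \ref{prop.Strichartz} to the rewritten equation $\partial_t^2\tilde h_\bot-\Delta\tilde h_\bot=\tfrac{N+2}{N-2}W^{4/(N-2)}\tilde h_\bot+(\text{source of rate }c_1^{-})$ on short intervals $[t,t+\tau]$ with $\tau$ small enough that the potential is absorbed via Lemma \ref{lem.est}, and then summing with Claim \ref{summation}, will propagate the rate $c_1^{-}$ to $\tilde h_\bot$ and finish the proof.
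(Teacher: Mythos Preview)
Your treatment of the $\YYY$-component is essentially the same as the paper's (Step~2 there): projecting \eqref{eq.linear} onto $\YYY$, solving the scalar ODE, and isolating the coefficient $A$ of $e^{-e_0 t}$. The paper then subtracts the \emph{entire} projection $\beta(t)\YYY$ rather than only $Ae^{-e_0 t}\YYY$, so that the remainder satisfies $\int\tilde h\,\YYY=0$ exactly; this is only a cosmetic difference.

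The genuine gap is in your mechanism for improving the decay rate from $c_0$ to $c_1^{-}$ on the part orthogonal to $\YYY$. A Strichartz estimate on a short interval $[t,t+\tau]$ only gives
\[
\|\tilde h_\bot(t+\tau)\|_{\hdot\times L^2}+\|\tilde h_\bot\|_{\ell(t,t+\tau)}\le C\Bigl(\|\tilde h_\bot(t)\|_{\hdot\times L^2}+\|\text{source}\|_{N(t,t+\tau)}\Bigr)
\]
with a constant $C>1$; summing these via Claim~\ref{summation} propagates whatever rate you already have on the energy norm but cannot \emph{improve} it (indeed, iterating with $C>1$ would produce exponential growth, not decay). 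You mention that $\tfrac12\|\partial_t\tilde h_\bot\|_2^2+Q(\tilde h_\bot)$ is equivalent to the norm, but you never use the identity that actually drives the improvement, namely
\[
\frac{1}{2}\frac{d}{dt}\Bigl(Q(h(t))+\|\partial_t h(t)\|_2^2\Bigr)=\int\eps(t)\,\partial_t h(t),
\]
which, integrated from $t$ to $+\infty$ and combined with the coercivity \eqref{QsurG}, yields $Q(h(t))+\|\partial_t h(t)\|_2^2\le Ce^{-(c_0+c_1)t}$. This single step improves the rate by $\tfrac{c_1-c_0}{2}$ at each iteration; Strichartz enters only to convert energy-norm decay into $\ell$-norm decay (Corollary~\ref{Stri.expo}), not to produce the gain.

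A secondary issue: your claimed relation $\beta_Z'(t)=\int\eps\,Z+(\text{lower order})$ is off by one derivative. Pairing \eqref{eq.linear} with $Z$ in the kernel of $L$ gives $\beta_Z''(t)=\int\eps\,Z$, not $\beta_Z'$; integrating this twice does not immediately give the sharp rate. The paper instead bounds $|\tilde\gamma'(t)|=|\int\partial_t h\,\Delta\tW|\le C\|\partial_t h(t)\|_2$ directly and integrates once, which works precisely because the energy identity above has already given the improved rate on $\|\partial_t h\|_2$. So the kernel and $G_\bot$ parts are not a ``triangular system'' to be bootstrapped simultaneously: the energy identity controls $\partial_t h$ and $g\in G_\bot$ first, and the kernel coefficients follow.
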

\begin{proof}
 Write 
$$ h(t)=\beta(t)\YYY+\tgam(t)\tW+\sum_{j=1}^N \gamma_j(t)W_j +g(t),\quad g(t)\in G_{\bot}.$$
By the definition of $G_{\bot}$, the condition $g(t)\in G_{\bot}$ is equivalent to
\begin{equation}
\label{betagamma}
\beta(t):=\int h(t)\YYY,\;
\tilde{\gamma}(t):=\int \nabla \left(h(t)-\beta(t)\YYY\right)\nabla \tW,\;
\gamma_j(t):=\int \nabla \left(h(t)-\beta(t)\YYY\right)\nabla W_j.
\end{equation}

\medskip

\noindent\emph{Step 1. Reduced case.}

In this case, we assume, in addition to the hypothesis of Proposition \ref{prop.linear}
\begin{equation}
\label{beta0}
\forall t\geq t_0,\quad \beta(t):=\int h(t)\YYY=0.
\end{equation}
And show that \eqref{dec.h1} (with $A=0$) or \eqref{dec.h2} hold. It is sufficient to show
\begin{equation}
\label{butstep2}
\|\partial_t h(t)\|_2+\|\nabla h(t)\|_2\leq Ce^{-\frac{(c_0+c_1)}{2}t}.
\end{equation}
An iteration argument will give the desired result. 

We first prove
\begin{equation}
%\label{ineg.gam}
%|\tgam''(t)|+\sum_j |\gamma_j''(t)|&\leq C\left(\|\nabla g(t)\|_{2} + e^{-c_1 t}\right)\\
\label{eq.Q}
\frac 12 \frac{d}{dt} \left(Q(h(t))+\|\partial_t h(t)\|_2^2\right)=\int_{\RR^N} \eps(t)\partial_t h(t).
\end{equation}
%Indeed, as $\beta(t)=0$ by assumption \eqref{beta0}, equation \eqref{eq.linear} gives 
%$$ \tgam''(t)=-\int_{\RR^N} \partial_t^2 h \Delta \tW=\int_{\RR^N} (Lh(t) -\eps(t))\Delta \tW.$$
%Again by \eqref{beta0}, $Lh(t)= Lg(t)$. Thus, in view of \eqref{hyp.eps},
%$$ \tgam''(t)=\int_{\RR^N} g(t) L\Delta \tW-\int_{\RR^N} \eps(t)\Delta \tW=O\Big(\|\nabla g(t)\|_2+e^{-c_1t}\Big).$$
%Repeating the argument on $\tgam_1$, \ldots, $\tgam_N$, we get \eqref{ineg.gam}.
Indeed, recalling that $Q(h)=\int L h\,h$, we get
$$\frac 12 \frac{d}{dt} \left(Q(h(t))+\|\partial_t h(t)\|_2^2\right)=\int_{\RR^N} Lh(t)\partial_t h(t)+\int_{\RR^N} \partial_t^2 h(t)\partial_t h(t),$$
which gives directly \eqref{eq.Q} from equation \eqref{eq.linear}, .

We now turn to the proof of \eqref{butstep2}. Note that $h$ is exponentially decreasing in the Strichartz norms:
\begin{equation}
 \label{stri.h1}
\|h\|_{\ell(t,+\infty)}\leq Ce^{-c_0 t}
\end{equation}
Indeed $\partial_t^2h-\Delta h=\frac{N+2}{N-1} W^{\frac{4}{N+2}}h+\eps$ and by Corollary \ref{corol.est.expo}, assumptions \eqref{hyp.h} and \eqref{hyp.eps},
\begin{equation*}
\left\| D_x^{1/2}\left(\frac{N+2}{N-1} W^{\frac{4}{N+2}}h+\eps\right)\right\|_{N(t,+\infty)}\leq Ce^{-c_0 t}.
\end{equation*}
By Strichartz estimates (see Proposition \ref{prop.Strichartz}), we get \eqref{stri.h1}.

Now, by \eqref{eq.Q},
$$ \left|\frac{d}{dt}\left(Q(h(t))+\|\partial_t h(t)\|_2^2\right)\right|\leq C\|D_x^{1/2}\eps(t)\|_{\frac{2(N+1)}{N+3}}\|D_x^{-1/2}\partial_t h(t)\|_{\frac{2(N+1)}{N-1}}.$$
Integrating between $t$ and $+\infty$, we get, combining assumption \eqref{hyp.eps} on $\eps$, estimate \eqref{stri.h1}, and H\"older inequality in time,
$$ Q(h(t))+\|\partial_t h(t)\|_2^2\leq C\|D^{1/2}_x \eps\|_{N(t,+\infty)} \|h\|_{\ell(t,+\infty)}\leq C e^{-(c_0+c_1)t}.$$
By Claim \ref{coercivity}, and \eqref{beta0}, $\|\nabla g(t)\|_2^2\leq CQ(g(t))=CQ(h(t))\leq Ce^{-(c_0+c_1)t}$. Hence
\begin{equation}
\label{ineggh1}
\|\partial_t h(t)\|_2+\|\nabla g(t)\|_2\leq C e^{-\frac{(c_0+c_1)}{2} t}.
\end{equation}

Furthermore, note that by the definition of $\tgam$ in \eqref{betagamma} and \eqref{ineggh1}. 
$$\tilde{\gamma}(t)=\int_{\RR^N} h(t)\Delta \tW\underset{t\rightarrow+\infty}{\longrightarrow 0},\quad |\tilde{\gamma}'(t)|=\left|\int_{\RR^N} \partial_t h(t)\Delta \tW\right|\leq Ce^{-\frac{c_0+c_1}{2}}.$$
Hence
\begin{equation}
\label{inegtgam}
|\tgam(t)|\leq Ce^{-\frac{(c_0+c_1)t}{2}}.
\end{equation}
By an analogous argument
\begin{equation}
\label{ineggamj}
\sum_{j=1}^N |\gamma_j(t)|\leq Ce^{-\frac{(c_0+c_1)t}{2}}.
\end{equation}
This gives \eqref{butstep2} and concludes Step 1.

\medskip

\noindent\emph{Step 2. General case.}
We no longer assume $\beta(t)=0$. We have:
\begin{align}
\label{eq.beta}
 \beta''(t)&=e_0^2\beta(t)+ \eta(t),\text{ where }\eta(t):=\int_{\RR^N} \eps(t)\YYY.
\end{align}
Indeed,
$$\beta''(t)=\int_{\RR^N} \partial_t^2h(t)\YYY=-\int_{\RR^N} Lh(t) \YYY+\int_{\RR^N} \eps(t)\YYY=e_0^2\int_{\RR^N} h(t)\YYY+\eta(t)\YYY.$$
We will show that $\tilde{h}(t):=h(t)-\beta(t)\YYY$ and $\tilde{\eps}(t):=\eps(t)-\eta(t)\YYY$  satisfy the hypothesis of Step 1. 

By assumption \eqref{hyp.eps}, $\left|\eta(t)\right|\leq Ce^{-c_1t}.$ We distinguish two cases.

\noindent\emph{First case: $e_0<c_1$.}
Then $e^{e_0t} |\eta(t)|\leq Ce^{-(c_1-e_0)t}$ with $c_1-e_0>0$. Solving \eqref{eq.beta}, we see that there exist real parameters $\beta_+$, $\beta_-$ such that 
$$ \beta(t)=\beta_-e^{-e_0t}+\beta_+ e^{e_0t} -\frac{1}{2e_0} \int_t^{+\infty} e^{e_0(t-s)}\eta(s)ds+\frac{1}{2e_0} \int_t^{+\infty} e^{-e_0(t-s)}\eta(s)ds.$$
Note that 
$\left|\int_t^{+\infty} e^{e_0(t-s)}\eta(s)ds\right|+\left| \int_t^{+\infty} e^{-e_0(t-s)}\eta(s)ds\right|\leq Ce^{-c_1 t}.$
Furthermore $\beta(t)$ tends to $0$ by \eqref{hyp.h} which shows that 
$\beta_+=0.$ Hence
\begin{equation}
 \label{beta1}
\beta(t)=\beta_- e^{-e_0t}+O(e^{-c_1 t}).
\end{equation}

\noindent\emph{Second case: $c_1\leq e_0$.}
Solving again \eqref{eq.beta}, we get real parameters $\beta_+$, $\beta_-$ such that 
$$ \beta(t)=\beta_-e^{-e_0t}+\beta_+ e^{e_0t} -\frac{1}{2e_0} \int_t^{+\infty} e^{e_0(t-s)}\eta(s)ds-\frac{1}{2e_0} \int_0^{t} e^{-e_0(t-s)}\eta(s)ds.$$
Note that 
$$\left|\frac{1}{2e_0} \int_t^{+\infty} e^{e_0(t-s)}\eta(s)ds+\frac{1}{2e_0} \int_0^{t} e^{-e_0(t-s)}\eta(s)ds\right|\leq C \left\{ \begin{aligned} e^{-c_1 t}& \text{ if }c_1<e_0\\
te^{-c_1 t}& \text{ if }c_1=e_0 \end{aligned}\right.,$$
so that we must have again $\beta_+=0$. As a conclusion
\begin{equation}
 \label{beta2}
c_1<e_0\Longrightarrow \beta(t)=O(e^{-c_1 t}),\quad c_1=e_0\Longrightarrow \beta(t)=O(te^{-c_1 t}).
\end{equation}

In view of \eqref{eq.beta}, it is easy to check that in both cases, $\tilde{h}$ and $\tilde{\eps}$ satisfy the assumptions of Step 1, which implies, together with \eqref{beta1} or \eqref{beta2}, the conclusions \eqref{dec.h1} or \eqref{dec.h2} of Proposition \ref{prop.linear}.
\end{proof}

\section{Proof of main results}
\label{sec.proofs}
In this section we conclude the proofs of Theorem \ref{th.exist} an \ref{th.classif}. We start, in Subsection \ref{sub.approx}, by constructing approximate solutions $U^a_k$ of \eqref{CP} which converge to $W$ as $t\rightarrow +\infty$. Subsection \ref{sub.contract} is devoted to a fixed point argument near $U^a_k$ for large $k$. The proof of Theorems \ref{th.exist} and \ref{th.classif} is the object of Subsection \ref{sub.conclu}, except for the blow-up of $W^+$ for negative times, which is shown in Subsection \ref{subW+}.

\subsection{A family of approximate solutions converging to $W$}
\label{sub.approx}
\begin{lemma}
\label{lem.approximate}
Let $a\in\RR$. There exist functions $(\Phi^{\aexp}_{j})_{j\geq 1}$ in $\SSS(\RR^N)$, such that $\Phi^{\aexp}_1=\aexp \YYY$ and if 
\begin{equation}
\label{defWk}
U^{\aexp}_k(t,x):=W(x)+\sum_{j=1}^k e^{-je_0t}\Phi^{\aexp}_j(x),
\end{equation}
then as $t\rightarrow +\infty$,
\begin{equation}
\label{eqWk}
\eps_k:=\partial_t^2 U^{\aexp}_{k}-\Delta U_k^{\aexp}-\big|U_k^{\aexp}\big|^{\frac{4}{N-2}}U_k^{\aexp}=O(e^{-(k+1)e_0 t}) \text{ in }\SSS(\RR^N).
\end{equation}
\end{lemma}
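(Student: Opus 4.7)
The plan is to construct the $\Phi_j^a$ inductively so that the coefficient of $e^{-je_0t}$ in the formal asymptotic expansion of $\eps_k$ vanishes for every $j \leq k$. Writing $f(u):=|u|^{4/(N-2)}u$, using $-\Delta W = f(W)$ and the linearized operator $L = -\Delta-\frac{N+2}{N-2}W^{4/(N-2)}$, a direct computation gives
\[
\eps_k = \sum_{j=1}^k e^{-je_0t}(L+j^2e_0^2)\Phi_j^a - \bigl[f(W+v_k)-f(W)-f'(W)v_k\bigr],
\]
where $v_k := \sum_{j=1}^k e^{-je_0t}\Phi_j^a$. Since the $\Phi_j^a$ are Schwartz while $W(x) \sim (1+|x|)^{-(N-2)}$, the ratio $|v_k|/W$ is small uniformly in $x$ for large $t$, legitimating a Taylor expansion of $f$ around $W$. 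Regrouping by powers of $e^{-e_0t}$ produces $f(W+v_k)-f(W)-f'(W)v_k = \sum_{q\geq 2}e^{-qe_0t}N_q^{(k)}$, modulo a Schwartz-small remainder, where $N_q^{(k)}\in\SSS$ depends only on $W$ and on the $\Phi_i^a$ with $i<q$.

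For the base case $j=1$ no source is present, so the choice $\Phi_1^a = a\YYY$ yields $(L+e_0^2)\Phi_1^a = 0$ by Proposition~\ref{prop.spectral}. For the inductive step, assuming $\Phi_1^a,\ldots,\Phi_{j-1}^a\in\SSS$ have already been chosen, the coefficient of $e^{-je_0t}$ in $\eps_j$ is $(L+j^2e_0^2)\Phi_j^a - N_j^{(j-1)}$, and we define $\Phi_j^a$ to be the solution of
\[
(L+j^2e_0^2)\Phi_j^a \;=\; N_j^{(j-1)}.
\]
For $j\geq 2$ we have $-j^2e_0^2<-e_0^2$, which lies outside the spectrum $\{-e_0^2\}\cup[0,\infty)$ of $L$ described in Proposition~\ref{prop.spectral}; hence $L+j^2e_0^2$ is invertible on $L^2$ and $\Phi_j^a\in H^2$ is uniquely determined.

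To upgrade $\Phi_j^a\in H^2$ to $\Phi_j^a\in\SSS$, rewrite the equation as $(-\Delta+j^2e_0^2)\Phi_j^a = N_j^{(j-1)} + \frac{N+2}{N-2}W^{4/(N-2)}\Phi_j^a$ and iterate with the free Bessel-potential resolvent, whose kernel decays exponentially. This propagates smoothness by elliptic regularity and rapid decay of every derivative by exploiting the algebraic decay of $W^{4/(N-2)}$ together with the Schwartz decay of $N_j^{(j-1)}$. Once the coefficients of $e^{-je_0t}$ have been cancelled for $j\leq k$, the residue $\eps_k$ consists only of Schwartz-controlled terms of total order $e^{-qe_0t}$ with $q\geq k+1$, which is \eqref{eqWk}.

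The main obstacle is controlling the Taylor expansion of $f$ at $W$ in every $\SSS$-seminorm, especially for $N=5$, where $f(u)=|u|^{4/3}u$ is only $C^{7/3}$ at $0$. This is overcome by observing that $W>0$ and, inductively, $|v_k|<W/2$ uniformly in $x$ for large $t$, so $f$ is smooth on the range of $W+v_k$ and the Taylor remainder admits the expected bounds in all Schwartz seminorms. For $N=3$ and $N=4$ the nonlinearity $f$ is a polynomial (quintic, respectively cubic), the Taylor expansion is finite and exact, and this step becomes purely algebraic.
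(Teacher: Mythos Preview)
Your proof is correct and follows essentially the same strategy as the paper: inductively solve $(L+j^2e_0^2)\Phi_j^a = N_j$ where $N_j$ is the $e^{-je_0t}$ coefficient of the nonlinear remainder $R(v_{j-1})$, using that $-j^2e_0^2$ lies outside the spectrum of $L$ for $j\geq 2$ and that the resolvent preserves $\SSS$. The paper packages the expansion via the real-analytic function $J(s)=|1+s|^{4/(N-2)}(1+s)-1-\tfrac{N+2}{N-2}s$, whose power series has radius of convergence $1$ (so it is exact once $|v_kW^{-1}|<1$), which is a slightly cleaner way to handle the $N=5$ issue you singled out, but the mathematical content is the same.
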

\begin{proof}

\bigskip

The proof is identical to the proof of \cite[Lemma 6.1]{DuMe07P}. We sketch it for the sake of completness. Note that 
\begin{equation}
\label{eq.h2}
\partial_t^2 (W+h)-\Delta (W+h)+|W+h|^{\frac{4}{N-2}}(W+h)=\partial_t^2 h+Lh-R(h)
\end{equation}
where $L$ and $R$ are defined in \eqref{eq.h}. We have
$$ R(h)= W^{\frac{N+2}{N-2}} J(W^{-1} h),\quad J(t):=|1+t|^{\frac{4}{N-2}}(1+t)-1-\frac{N+2}{N-2}t.$$
The function $J$ is real analytic for $|t|<1$ and $J(0)=J'(0)=0$. Thus $J(t)$ is a power series of the form $\sum_{j\geq 2} c_j t^j$ which has radius of convergence $1$. In particular, if $h\in \SSS(\RR^N)$ and satisfies $|h(x)W^{-1}(x)|\leq 1/2$, for all $x\in \RR^N$, then
\begin{equation}
\label{DSE}
R(h)=\sum_{j=2}^{+\infty} c_j W^{\frac{N+2}{N-2}}\left(hW^{-1}\right)^j,
\end{equation}
where the series converges in $\SSS(\RR^N)$. 

Let us fix $a\in \RR$. We will omit most superscripts $a$ to simplify notations. Clearly, by \eqref{eq.h2}, if $U_1=W+a\YYY e^{-e_0t}$,
$$ \partial_t^2 U_{1}-\Delta U_{1}-\big|U_1\big|^{\frac{4}{N-2}}U_1=e_0^2 ae^{-e_0t}\YYY+ a e^{-e_0t} L(\YYY)-R\left(a\YYY e^{-e_0t}\right)=-R\left(a\YYY e^{-e_0t}\right),$$
which yields \eqref{eqWk} for $k=1$.

Now assume that for some $k\geq 1$, there exist $\Phi_1$,\ldots,$\Phi_k$ in $\SSS(\RR^N)$ such that \eqref{eqWk} holds with 
\begin{equation}
\label{defhk}
U_k=W+h_k,\text{ where } h_k:=\sum_{j=1}^ke^{-je_0t} \Phi_j.
\end{equation}
Let $\Phi_{k+1}\in \SSS(\RR^N)$. Then
\begin{multline}
\label{grosseequation}
\partial_t^2 \left(h_{k}+e^{-(k+1)e_0t}\Phi_{k+1}\right)+L\left(h_k+e^{-(k+1)e_0t}\Phi_{k+1}\right)-R\left(h_k+e^{-(k+1)e_0t}\Phi_{k+1}\right)\\
=\big((k+1)^2e_0^2 +L\big)e^{-(k+1)e_0 t}\Phi_{k+1}+\eps_k+R(h_k)-R\big(h_k+e^{-(k+1)e_0t}\Phi_{k+1}\big).
\end{multline}
By \eqref{DSE} we see that $\eps_k$ must be, for large $t>0$, an infinite sum of the form $\sum_{j\geq 0} e^{-je_0 t}\Psi_{j,k}(x)$, with convergence in $\SSS(\RR^N)$. Furthermore, the induction hypothesis \eqref{eqWk} shows that $\Psi_{j,k}=0$ for $j\leq k$. Thus
$$ \eps_k(t,x)=\sum_{j\geq k+1} e^{-je_0 t}\Psi_{j,k}(x).$$
Furthermore, $R(h_k)$ and $R\left(h_{k}+e^{-(k+1)e_0t}\right)$ have similar asymptotic developments, and if $t$ is large enough, using that $h_k=O(e^{-e_0t})$, we get $\left|R(h_k)-R(h_k+e^{-(k+1)e_0t})\right|\leq Ce^{-e_0(k+2)t}$. This shows
\begin{equation}
\label{petiteequation}
 \eps_k+R(h_k)-R\left(h_k+e^{-(k+1)e_0t}\Phi_{k+1}\right)=e^{-(k+1)e_0t}\Psi_{k+1,k}+O\big(e^{-(k+2)e_0t}\big) \text{ in }\SSS(\RR^N).
\end{equation}
By Proposition \ref{prop.spectral}, $-(k+1)^2e_0^2$ is not in the spectrum of $L$. It is classical that the resolvent $\big((k+1)^2e_0^2 +L\big)^{-1}$ maps $\SSS$ into $\SSS$ (see e.g. \cite[\S 7.2.2]{DuMe07P} for the proof of a similar fact). In view of \eqref{grosseequation} and \eqref{petiteequation}, it suffices to take
$$\Phi_{k+1}:=\big((k+1)^2e_0^2 +L\big)^{-1}\Psi_{k,k+1}$$ 
to get \eqref{eqWk} at rank $k+1$.
\end{proof}

\subsection{Contraction argument near an approximate solution of large order}
\label{sub.contract}
\begin{prop}
\label{prop.fxpt}
%\item \label{point.exist} 
Let $a\in \RR$. There exists $k_0>0$ such that  for any $k\geq k_0$, there exists $t_k\geq 0$ and a solution $U^{\aexp}$ of \eqref{CP} such that for $t\geq t_k$,
\begin{equation}
\label{CondW_A}
\big\|U^{\aexp}-U_{k}^{\aexp}\big\|_{\ell(t,+\infty)}\leq e^{-(k+\frac{1}{2})e_0t}.
\end{equation}
Furthermore, $U^{\aexp}$ is the unique solution of \eqref{CP} satisfying \eqref{CondW_A} for large $t$. It is independent of $k$ and satisfies, for large $t$, 
\begin{equation}
\label{CondWa2}
\|\nabla(U^{\aexp}(t)-U_{k}^{\aexp})\|_{2}+\|\partial_t(U^{\aexp}(t)-U_{k}^{\aexp})\|_{2}\leq e^{-(k+\frac{1}{2})e_0t}.
\end{equation}
\end{prop}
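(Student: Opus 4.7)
The plan is to set $h := U^a - U_k^a$ and look for $h$ via a fixed point argument. Subtracting the identity $\partial_t^2 U_k^a - \Delta U_k^a - |U_k^a|^{4/(N-2)} U_k^a = \eps_k$ from the equation that $U^a$ should satisfy, $h$ must solve
\begin{equation*}
\partial_t^2 h - \Delta h = F(h) := |U_k^a+h|^{\frac{4}{N-2}}(U_k^a+h) - |U_k^a|^{\frac{4}{N-2}}U_k^a - \eps_k.
\end{equation*}
Imposing the ``data at $+\infty$'' condition $(h,\partial_t h)\to(0,0)$, I convert this into the integral equation $h=\Phi(h)$ with
$\Phi(h)(t):=-\int_t^{+\infty}\frac{\sin((t-s)\sqrt{-\Delta})}{\sqrt{-\Delta}}F(h)(s)\,ds$,
and seek a fixed point in the complete metric space
\begin{equation*}
E_k:=\Big\{ h\in \ell(t_k,+\infty) \ :\ \forall t\geq t_k,\;\|h\|_{\ell(t,+\infty)}\leq e^{-(k+\frac{1}{2})e_0 t}\Big\},
\end{equation*}
equipped with the distance inherited from this weighted norm.

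To show $\Phi:E_k\to E_k$ is a contraction for $k$ and $t_k$ large, I apply the dual Strichartz estimate \eqref{dualStrichartz} to obtain $\|\Phi(h)\|_{\ell(t,\infty)}\leq C\|D_x^{1/2}F(h)\|_{N(t,\infty)}$, and split $F(h)$ into three pieces. First, the source term $\eps_k$ is, by Lemma \ref{lem.approximate}, bounded in every Schwartz seminorm by $Ce^{-(k+1)e_0 t}$, giving $\|D_x^{1/2}\eps_k\|_{N(t,\infty)}\leq Ce^{-(k+1)e_0 t}$, which is strictly smaller than the target $e^{-(k+1/2)e_0 t}$. Second, the purely nonlinear remainder (containing the terms which are at least quadratic in $h$) is controlled by \eqref{NLest} of Lemma \ref{lem.est}, which on a short interval $[t,t+\tau_0]$ yields a bound by $\|h\|_{\ell}^2+\|h\|_{\ell}^{(N+2)/(N-2)}$, of order $e^{-(2k+1)e_0 t}$ on $E_k$; summing via Claim \ref{summation} preserves this decay. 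Third, the linear piece $\tfrac{N+2}{N-2}|U_k^a|^{4/(N-2)}h$, which is essentially $\tfrac{N+2}{N-2}W^{4/(N-2)}h$ plus exponentially small corrections, is the delicate one: estimate \eqref{Lest} gives on intervals of length $\tau_0$ a bound of $C(\tau_0^{2/(N+1)}+\tau_0^{5/(2(N+1))})\|h\|_{\ell(t,t+\tau_0)}$, and choosing $\tau_0$ small (independent of $k$) makes the constant $<\tfrac12$, after which Claim \ref{summation} yields the required exponentially decaying bound on $[t,+\infty)$. Similar bounds applied to $F(h)-F(h')$ give the contraction, so Picard produces a unique fixed point $h\in E_k$, hence $U^a:=U_k^a+h$.

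Uniqueness of $U^a$ satisfying \eqref{CondW_A} for large $t$ follows by running the same contraction argument on the difference of two candidates. Independence from $k$ is then automatic: the solution built at order $k+1$ also satisfies the order-$k$ bound \eqref{CondW_A} (using the decay of $e^{-(k+1)e_0t}\Phi^a_{k+1}$), so by uniqueness the two constructions coincide. Finally, \eqref{CondWa2} in $\hdot\times L^2$ follows from \eqref{CondW_A} by applying the full Strichartz estimate \eqref{Strichartz} to $h$ with source $F(h)$, using that $\|D_x^{1/2}F(h)\|_{N(t,\infty)}$ is already controlled by $e^{-(k+1/2)e_0 t}$ from the estimates above. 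The main obstacle is the linear term $W^{4/(N-2)}h$: since $W$ is not small, there is no smallness to absorb $W^{4/(N-2)}$ directly, and the space-time integrability of $W$ is just enough for the Strichartz norms to see it only through a time factor that vanishes on short intervals, which is why the short-interval-plus-summation strategy via Claim \ref{summation} is essential.
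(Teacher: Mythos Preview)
Your proof is correct and follows essentially the same route as the paper: the same fixed-point formulation for $h=U^a-U_k^a$, the same three-way decomposition of the source (linear potential term, nonlinear remainder, error $\eps_k$), and the same short-interval-plus-summation device via Claim~\ref{summation} to handle the linear piece $W^{4/(N-2)}h$. The one point you could sharpen is the role of $k_0$: after summation the linear term picks up a factor $\bigl(1-e^{-(k+\frac12)e_0\tau_0}\bigr)^{-1}$, and it is precisely taking $k\ge k_0$ (once $\tau_0$ is fixed small) that keeps this factor bounded and yields the final contraction constant below $1$; the paper makes this dependence explicit in its Claim~\ref{claim.contract}.
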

\begin{proof}
%We first prove, by a fixed point argument, that \eqref{CP} has an unique solution satisfying \eqref{CondW_A}  for a fixed large $k$ . We will then deduce from the uniqueness in the fixed point that all these solutions coincide.\par
%\medskip
We follow the lines of  the proof of \cite[Proposition 6.3]{DuMe07P}.

\noindent\emph{Step 1. Transformation into a fixed-point problem.}
As in the proof of Lemma \ref{lem.approximate}, we will fix $a\in\RR$ and omit most of the superscripts $a$. Recall the definition of $\eps_k$ and $h_k$ from \eqref{defWk} and \eqref{defhk}.
The proof relies on a fixed point argument to construct 
$$ w:=U^{\aexp}-W-h_k.$$
The function $U^{\aexp}$ is solution of \eqref{CP} if and only if $U^{\aexp}-W$ is solution of \eqref{eq.h}. Substracting equation \eqref{eq.h} on $U^{\aexp}-W$ and the equation $\partial_t^2 h_k+L h_k=R(h_k)+\eps_k$, we get that $U^{\aexp}$ satisfies \eqref{CP} if and only if $w$ satisfies $\partial_t^2 w+L w=R(h_k+w)-R(h_k)-\eps_k$. This may be written as
$$\partial_t^2 w-\Delta w=\frac{N+2}{N-2}W^{\frac{4}{N-2}}w+R(h_k+w)-R(h_k)-\eps_k.$$ 
Thus the existence of a solution $U^{\aexp}$ of \eqref{CP} satisfying \eqref{CondW_A} for $t\geq t_k$ may be written as the following fixed-point problem on $w$
\begin{multline}
\label{defM}
\forall t\geq t_k,\quad w(t)=\MMM_k(w)(t)\text{ and } \|w\|_{\ell(t,+\infty)}\leq e^{-\left(k+\frac{1}{2}\right)e_0 t},\text{ where } \MMM_k(w)(t):=\\
-\int_t^{+\infty} \frac{\sin\big((t-s)\sqrt{-\Delta}\big)}{\sqrt{-\Delta}}\left[\frac{N+2}{N-2} W^{\frac{4}{N-2}}w (s)+R(h_k(s)+w(s))-R(h_k(s))-\eps_k(s)\right]ds.
\end{multline}
Let us fix $k$ and $t_k$. Consider
\begin{align*}
E_{\ell}^k&:=\left\{w\in \ell(t_k,+\infty);\; \|w\|_{E_{\ell}^k}:=\sup_{t\geq t_k} e^{\left(k+\frac{1}{2}\right)e_0 t}\|w\|_{\ell(t,+\infty)}<\infty\right\}\\
B_{\ell}^k&:=\big\{w\in E_{\ell}^k,\; \|w\|_{E_{\ell}^k}\leq 1\big\}.
%E_N&:=\left\{h\in ??,\;\nabla h\in N(t_0,+\infty);\; \sup_{t\geq t_0} e^{\left(k+\frac{1}{2}\right)t}\|\nabla h\|_{N(t,+\infty)}=\|h\|_{E_N}<\infty\right\}.
\end{align*}
The space $E_{\ell}^k$ is clearly a Banach space. In view of \eqref{defM}, it is sufficient to show that if $t_k$ and $k$ are large enough, the mapping $\MMM_k$ is a contraction on $B_{\ell}^k$. 
\medskip

\noindent\emph{Step 2. Contraction property.} Note that by Strichartz inequality on the free equation (Lemma \ref{Strichartz}), there is a constant $C^*>0$ such that if $w,\tw\in E_{\ell}^k$, $k\geq 1$,
\begin{align}
\label{Strichartz.M1}
\|\MMM_k(w)\|_{\ell(t,+\infty)}&\leq 
C^*\bigg[\left\|D_x^{1/2} \left(W^{\frac{4}{N-2}}w\right)\right\|_{N(t,+\infty)}\\ \notag &\qquad+\left\|D_x^{1/2} \big(R(h_k+w)-R(h_k)\big)\right\|_{N(t,+\infty)}+\|D_x^{1/2}\eps_k\|_{N(t,+\infty)}\bigg]\\
\label{Strichartz.M2}
\|\MMM_k(w)-\MMM_k(\tw)&\|_{\ell(t,+\infty)}\leq  C^*\bigg[\left\|D_x^{1/2}\left(W^{\frac{4}{N-2}}(w-\tw)\right)\right\|_{N(t,+\infty)}\\ \notag &\qquad\qquad\quad+\left\|D_x^{1/2} \big(R(h_k+w)-R(h_k+\tw)\big)\right\|_{N(t,+\infty)}\bigg].
\end{align}
\begin{claim}
\label{claim.contract}
There exists a constant $C_{k}>0$, depending only on $k$ such that for all $w,\tw\in B_{\ell}^k$ and $t\geq t_k$
\begin{gather}
\label{fxpt.epsk}
\|D_x^{1/2}\eps_k\|_{N(t,+\infty)}\leq C_{k} e^{-(k+1)e_0t},\\
\label{fxpt.R}
\big\|D_x^{1/2}\big(R(h_k+w)-R(h_k+\tw)\big)\big\|_{N(t,+\infty)} \leq C_{k}e^{-(k+\frac 32)e_0t} \|w-\tw\|_{E_{\ell}^k}.
\end{gather}
Furthermore, there exists $k_0>0$ such that for all $k\geq k_0$ and all $w\in E_{\ell}^k$
\begin{equation}
\label{fxpt.VVV}
\left\|D_x^{1/2} \left(W^{\frac{4}{N-2}}w\right)\right\|_{N(t,+\infty)} \leq \frac{1}{4C^*} e^{-(k+\frac 12)e_0t}\|w\|_{E_{\ell}^k}.
\end{equation}
\end{claim}

\begin{proof}[Proof of Claim \ref{claim.contract}]
The proof is very close  to \cite[Claim 6.4]{DuMe07P}. Estimate \eqref{fxpt.epsk} follows immediately from \eqref{eqWk}, and \eqref{fxpt.R} follows immediately from Corollary \ref{corol.est.expo}.\par
Let us show \eqref{fxpt.VVV}. Let $\tau_0\in (0,1)$. By Lemma \ref{lem.est}, there exists a constant $C_2>0$ such that 
$$\|D_x^{1/2}( W^{\frac{4}{N-2}}w)\|_{N(t,t+\tau_0)}\leq C_2\tau_0^{\frac{2}{N+1}}\|w\|_{\ell (t,t+\tau_0)}\leq C_2\tau_0^{\frac{2}{N+1}}e^{-\left(k+\frac{1}{2}\right)e_0 t}\|w\|_{E_{\ell}^k}.$$
By Claim \ref{summation},
$$\|D_x^{1/2}(W^{\frac{4}{N-2}}w)\|_{N(t,+\infty)}\leq \frac{C_2 e^{-(k+\frac 12)e_0t}} {1-e^{-(k+\frac 12)e_0\tau_0}}\tau_0^{\frac{2}{N+1}} \|w\|_{E_{\ell}^k}.$$
Chosing $\tau_0$ and $k_0$ such that $C_2 \tau_0^{\frac{2}{N+1}}=\frac{1}{8C^*}$, and $e^{-(k_0+\frac 12)e_0\tau_0}\leq \frac 12$, we get \eqref{fxpt.VVV} for $k\geq k_0$.
\end{proof}

Chose $k\geq k_0$. By \eqref{Strichartz.M1} and Claim \ref{claim.contract}, we get, if $g\in B_{\ell}^k$ 
\begin{align*}
\|\MMM_k(w)\|_{\ell(t,+\infty)}
&\leq \left(\frac 14 e^{-(k+\frac 12)e_0t}\|w\|_{E_{\ell}^k}+C^*C_{k}e^{-(k+\frac 32)e_0t}\|w\|_{E^k_{\ell}}+C^*C_{k} e^{-(k+1)e_0t_k}\right)\\
&\leq e^{-(k+\frac 12)e_0t}\left(\frac{1}{4}+C^*C_{k}e^{-e_0t_k}+C^*C_{k} e^{-\frac 12e_0t_k}\right).
\end{align*}
Chosing $t_k$ so large that $C^*C_{k}e^{-e_0t_k}+C^*C_{k} e^{-\frac 12 e_0t_k}\leq \frac 12$, we get that for large $k$, $\MMM_k(w)$ is in $B_{\ell}^k$.

Now, let $w,\tw\in B_{\ell}^k$. Similarly, by \eqref{Strichartz.M2} and Claim \ref{claim.contract},
\begin{gather*}
\|\MMM_k(w)-\MMM_k(\tw)\|_{E_{\ell}^k}\leq \|w-\tw\|_{E_{\ell}^k}\left(\frac{1}{4}+C^*C_{k} e^{-e_0t_k}\right),
\end{gather*}
which shows, chosing a larger $t_k$ if necessary, that $\MMM_k$ is a contraction of $B_{\ell}^k$.\par

Thus, for each $k\geq k_0$, \eqref{CP} has an unique solution $U^{\aexp}$ satisfying \eqref{CondW_A} for $t\geq t_k$. The preceding proof clearly remains valid taking a larger $t_k$, so that the uniqueness still holds in the class of solutions of \eqref{CP} satisfying \eqref{CondW_A} for $t\geq t'_k$, where $t'_k$ is any real number larger than $t_k$. Using the uniqueness in the Cauchy problem \eqref{CP}, it is now straightforward to show that $U^{\aexp}$ does not depend on $k\geq k_0$.

% Thus, for each $k\geq k_0$, \eqref{CP} has an unique solution $W^{\aexp}$ satisfying \eqref{CondW_A} for $t\geq t_k$. The preceding proof clearly remains valid taking a larger $t_k$, so that the uniqueness still holds in the class of solutions of \eqref{CP} satisfying \eqref{CondW_A} for $t\geq t'_k$, where $t'_k$ is any real number larger than $t_k$.
% Let $k<\tilde{k}$ and $W^{\aexp}$, $\widetilde{W}^{\aexp}$ be the solutions of \eqref{CP} constructed above for $k$ and $\tilde{k}$ respectively. Then, $\widetilde{W}^{\aexp}$ although satisfies \eqref{CondW_A} for large $t$, so that the uniqueness in the fixed-point shows that $W^{\aexp}(t)=\widetilde{W}^{\aexp}(t)$, for large $t$ and thus, by uniqueness in \eqref{CP}, that $W^{\aexp}=\widetilde{W}^{\aexp}$.
% This shows that $W^{\aexp}$ does not depend on $k$.

It remains to show \eqref{CondWa2}. Let $k>0$ be a large integer and $w\in B_{\ell}^k$. 
By Strichartz inequality \eqref{dualStrichartz}, and the definition of $\MMM_k$, we have, for $t\geq t_k$,
\begin{equation*}
\|\nabla \MMM_k(w)(t)\|_{2}+\|\partial_t \MMM_k(w)(t)\|_{2}\leq \Big\|D_x^{1/2}\Big(\frac{N+2}{N-2}W^{\frac{4}{N-2}}w+R(h_k+w)-R(h_k)-\eps_k\Big)\Big\|_{N(t,+\infty)}.
\end{equation*}
As a consequence of Claim \ref{claim.contract} and the fact that $\|w\|_{E_{\ell}}^k\leq 1$, we get
$$\|\MMM_k(w)(t)\|_{\hdot}\leq C\left(e^{-\left(k+\frac 12 \right)e_0t}\|w\|_{E_{\ell}^k}+e^{-(k+1)e_0 t}\right)\leq Ce^{-\left(k+\frac 12 \right)e_0 t}.$$
Applying the preceding inequality to the solution $w=U^{\aexp}-U_k^{\aexp}$ of the fixed point $w=\MMM_k(w)$, we get directly \eqref{CondWa2}. The proof of Proposition \ref{prop.fxpt} is complete.
\end{proof}
\subsection{Conclusion of the proofs}

\label{sub.conclu}
At this levels, the proof are similar to the one of \cite{DuMe07P}, except for the blowing-up of $W^+$ which is proven in the next subsection.
\begin{proof}[Proof of Theorem \ref{th.exist}.]
The function $\YYY$ is an eigenfunction for the first eigenvalue of $L$, thus it must have constant sign. Replacing $\YYY$ by $-\YYY$ if necessary, we may assume that $\YYY(x)>0$ for all $x$ and thus
\begin{equation}
\label{WY1>0}
\int \nabla \YYY \cdot \nabla W>0.
\end{equation}
Let 
$$W^{\pm}:=U^{\pm 1},$$ 
which yields two solutions of \eqref{CP} for large $t\geq t_0$. Then all the conditions of Theorem \ref{th.exist} are satisfied. Indeed, $W^{\pm}$ is globally defined gor $t\geq t_0$ and by \eqref{CondWa2}, $(W^{\pm},\partial_t W^{\pm})$ tends to $(W,0)$ in $\hdot\times L^2$, which yields \eqref{ex.lim}. The energy condition \eqref{ex.energy} then  follows from the conservation of the energy. Furthermore, again by \eqref{CondWa2},
$$ \|\nabla U^{\aexp}\|_{2}^2=\|\nabla W\|^{2}_{2}+2ae^{-e_0t} \int  (\nabla W \cdot \nabla \YYY+O(e^{-\frac{3}{2}e_0t}),$$
which shows, together with \eqref{WY1>0}, that for large $t>0$,
$$ \|\nabla W^+(t)\|_{2}>0, \quad \|\nabla W^-(t)\|_{2}<0.$$
From Remark \ref{RemPersist}, this inequalities remain valids for every $t$ in the intervals of existence of $W^+$ and $W^-$.

Finally $T_-(W^-)=-\infty$ by  \eqref{T+infty} in Proposition \ref{propsub} and $\|u\|_{S(-\infty,0)}<\infty$ by \eqref{scattering-} in Proposition \ref{prop.CVexpu} .\par

Except for the proof of the finite time blow-up of $W^+$ for negative time, which we postpone to Subsection \ref{subW+}, the proof of Theorem \ref{th.exist} is complete.
\end{proof}

\begin{proof}[Proof of Theorem \ref{th.classif}]
Let us first prove:
\begin{lemma}
\label{lem.unic}
If $u$ is a solution of \eqref{CP} satisfying 
\begin{equation}
\label{devtu1'}
\|\nabla(u(t)-W)\|_{2}+\|\partial_t u(t)\|_{2}\leq Ce^{-\gamma_0 t},\quad E(u_0,u_1)=E(W)
\end{equation}
then 
$$\exists! a\in \RR, \quad u=U^{\aexp}.$$
\end{lemma}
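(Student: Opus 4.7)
The plan is to identify the parameter $a$ from the leading exponential component of $h:=u-W$, and then bootstrap by comparing $u$ with the approximate solutions $U^a_k$ of Lemma \ref{lem.approximate} until the uniqueness part of Proposition \ref{prop.fxpt} applies. Setting $h:=u-W$, we have $\partial_t^2 h+Lh=R(h)$ with $\|\nabla h(t)\|_2+\|\partial_t h(t)\|_2\leq Ce^{-\gamma_0 t}$. Corollary \ref{Stri.expo} then gives exponential decay of the forcing $R(h)$ at rate $2\gamma_0$, so Proposition \ref{prop.linear} applies with $c_0=\gamma_0$ and $c_1=2\gamma_0$. If $2\gamma_0\le e_0$, conclusion \eqref{dec.h2} only improves the decay of $h$ to any rate strictly less than $2\gamma_0$; I would iterate this bootstrap (each step roughly doubles the exponent) until the forcing rate strictly exceeds $e_0$, at which point conclusion \eqref{dec.h1} produces $a\in\RR$ such that
\[
\|\nabla(h(t)-ae^{-e_0 t}\YYY)\|_2+\|\partial_t(h(t)-ae^{-e_0 t}\YYY)\|_2\le Ce^{-\gamma_1 t}
\]
for some $\gamma_1>e_0$. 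This defines the candidate $a$; equivalently, $u-U^a_1$ decays strictly faster than $e^{-e_0 t}$ in $\hdot\times L^2$.

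I would then prove by induction on $k\ge 1$ that for each $k$,
\[
\|\nabla(u(t)-U^a_k(t))\|_2+\|\partial_t(u(t)-U^a_k(t))\|_2\le Ce^{-(k+\frac12)e_0 t}.
\]
Writing $\tilde h_k:=u-U^a_k$ and subtracting \eqref{eqWk} from the equation for $u$, then linearising around $W$, one obtains
\[
\partial_t^2\tilde h_k+L\tilde h_k=R(u-W)-R(U^a_k-W)-\eps_k,
\]
with $\eps_k=O(e^{-(k+1)e_0 t})$ in $\SSS(\RR^N)$ (Lemma \ref{lem.approximate}) and $U^a_k-W=O(e^{-e_0 t})$. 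Using Corollary \ref{corol.est.expo} together with a first-order Taylor expansion of $R$, the right-hand side is controlled in the $N$-norm by $C\bigl(e^{-(e_0+\gamma_k)t}+e^{-2\gamma_k t}+e^{-(k+1)e_0 t}\bigr)$, where $\gamma_k$ is the current decay rate of $\tilde h_k$. Applying Proposition \ref{prop.linear} with these parameters, any eigenfunction coefficient $A$ appearing in conclusion \eqref{dec.h1} must vanish, because $\gamma_k>e_0$ already rules out a component decaying at the slower rate $e^{-e_0 t}$; hence $\tilde h_k$ itself gains the improved exponential rate. Combined with the explicit identity $U^a_{k+1}-U^a_k=e^{-(k+1)e_0 t}\Phi^{a}_{k+1}$ from Lemma \ref{lem.approximate}, this pushes the estimate from $\tilde h_k$ to $\tilde h_{k+1}$ and raises the exponent by at least $\tfrac12 e_0$ per step.

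Once $k\ge k_0$ as in Proposition \ref{prop.fxpt}, Corollary \ref{Stri.expo} upgrades the energy-norm estimate to $\|u-U^a_k\|_{\ell(t,+\infty)}\le e^{-(k+\frac12)e_0 t}$ for $t$ large, so $u$ satisfies the fixed-point characterisation \eqref{CondW_A}; the uniqueness part of Proposition \ref{prop.fxpt} then yields $u=U^a$. Uniqueness of $a$ is immediate: if $u=U^{a_1}=U^{a_2}$, then \eqref{CondWa2} with $k=1$ gives $(a_1-a_2)e^{-e_0 t}\YYY=O(e^{-\frac{3}{2}e_0 t})$ in $\hdot$, forcing $a_1=a_2$ since $\YYY\not\equiv 0$. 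The main obstacle I anticipate is the inductive step, specifically the decay-rate bookkeeping in successive applications of Proposition \ref{prop.linear} and the verification, at each stage, that the eigenfunction component permitted by \eqref{dec.h1} must vanish by virtue of the already-gained decay; the control of the forcing for $\tilde h_k$ in the $N$-norm, via Lemma \ref{lem.est}, also needs to be handled with care uniformly in $k$.
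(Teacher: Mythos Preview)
Your proposal is correct and follows the same two-step architecture as the paper: first bootstrap via Proposition \ref{prop.linear} and Corollary \ref{Stri.expo} to extract the coefficient $a$, then bootstrap again to reach the uniqueness statement of Proposition \ref{prop.fxpt}. The only substantive difference is in the second bootstrap. The paper compares $u$ directly with the exact solution $U^a$, writing $\partial_t^2(h-h^a)+L(h-h^a)=R(h^a)-R(h)$ with $h^a:=U^a-W$; the forcing then decays at rate $e_0+m_1$ (where $m_1$ is the current rate of $h-h^a$) with no extra error, so each application of Proposition \ref{prop.linear} gains essentially $e_0$. You instead compare with the approximate profiles $U^a_k$, which introduces the additional forcing $-\eps_k=O(e^{-(k+1)e_0 t})$; this caps a single application of Proposition \ref{prop.linear} at rate just below $(k+1)e_0$, so you must increment $k$ before the next gain. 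Both routes work and end at the same place; the paper's version is a little cleaner because the gain per step is uniform and there is no need to interleave the linear iteration with increments of $k$, while your version has the mild advantage of not invoking the exact $U^a$ until the very last line.
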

\begin{corol}
\label{corol.unic}
For any $a\neq 0$, there exists $T_a\in \RR$ such that
\begin{equation}
\label{W+-=Wa}\left\{
\begin{aligned}
 U^{\aexp}=W^+(t+T_a) &\text{ if } & a>0\\
U^{\aexp}=W^-(t+T_a) &\text{ if } & a<0.
\end{aligned}
\right.
\end{equation}
\end{corol}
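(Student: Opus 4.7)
The plan is to use the uniqueness clause of Lemma \ref{lem.unic} to identify the time-translates of $W^{\pm}$ with the solutions $U^a$, by matching leading-order asymptotics as $t \to +\infty$.

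First I would fix $a \neq 0$, set $\sigma := \operatorname{sign}(a)\in\{-1,+1\}$, and define
$$T_a := -\frac{1}{e_0}\log|a|, \qquad v(t) := W^{\sigma}(t+T_a)=U^{\sigma}(t+T_a),$$
so that $\sigma e^{-e_0 T_a}=a$. The point of this choice is that translating time by $T_a$ converts the leading term $\sigma e^{-e_0 t}\YYY$ in the expansion of $W^{\sigma}$ into $a e^{-e_0 t}\YYY$. Clearly $v$ is a solution of \eqref{CP}, defined at least for large $t$.

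Next I would verify that $v$ satisfies the hypotheses of Lemma \ref{lem.unic}. Using \eqref{CondWa2} applied to $U^{\sigma}$ with any fixed $k \geq \max(k_0,2)$, combined with the explicit form \eqref{defWk} of $U^{\sigma}_{k}$ (which gives $U^{\sigma}_k-W = \sigma e^{-e_0 t}\YYY + O_{\mathcal{S}}(e^{-2e_0 t})$ because $\Phi^{\sigma}_1=\sigma\YYY$), the translation yields
\begin{equation}
\label{expvsketch}
\bigl\|\nabla\bigl(v(t)-W-a e^{-e_0 t}\YYY\bigr)\bigr\|_{2}+\bigl\|\partial_t v(t)+ae_0 e^{-e_0 t}\YYY\bigr\|_{2}\leq Ce^{-\frac{3}{2}e_0 t}
\end{equation}
for $t$ large enough. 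In particular $\|\nabla(v(t)-W)\|_2+\|\partial_t v(t)\|_2\leq Ce^{-e_0 t}$, and by energy conservation $E(v_0,v_1)=E(W^{\sigma}_0,W^{\sigma}_1)=E(W,0)$, so Lemma \ref{lem.unic} applies and provides a unique $a'\in\RR$ with $v=U^{a'}$.

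It then remains to show $a'=a$. Applying \eqref{CondWa2} to $U^{a'}$ itself (again with $k\geq\max(k_0,2)$) gives the analogue of \eqref{expvsketch},
$$\bigl\|\nabla\bigl(U^{a'}(t)-W-a' e^{-e_0 t}\YYY\bigr)\bigr\|_{2}\leq Ce^{-\frac{3}{2}e_0 t}.$$
Subtracting this from \eqref{expvsketch} and using $v=U^{a'}$ yields
$$|a-a'|\,e^{-e_0 t}\|\nabla\YYY\|_{2}\leq Ce^{-\frac{3}{2}e_0 t},$$
which forces $a=a'$, proving \eqref{W+-=Wa}. The only delicate point is the bookkeeping: one must use $k\geq 2$ in \eqref{CondWa2} so that the higher-order correctors $e^{-je_0 t}\Phi^{a}_{j}$ ($j\geq 2$) are absorbed into the $O(e^{-\frac{3}{2}e_0 t})$ remainder, and thereby the coefficient of $e^{-e_0 t}\YYY$ in the expansion of $U^a$ is unambiguously $a$ (by Lemma \ref{lem.approximate}, which fixes $\Phi^a_1=a\YYY$).
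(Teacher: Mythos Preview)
Your proof is correct and follows essentially the same approach as the paper: apply Lemma \ref{lem.unic} to a suitable time-translate, then read off the parameter $a'$ from the coefficient of $e^{-e_0 t}\YYY$ in the expansion \eqref{CondWa2}. The only cosmetic difference is direction: the paper translates $U^{a}$ and identifies $U^{a}(\cdot+T_a)$ with $W^{\pm}=U^{\pm 1}$, whereas you translate $W^{\sigma}$ and identify $W^{\sigma}(\cdot+T_a)$ with $U^{a}$; these are equivalent and your choice of $T_a=-\frac{1}{e_0}\log|a|$ matches the statement of the corollary directly.
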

\begin{proof}[Proof of Lemma \ref{lem.unic}]
Let $u=W+h$ be a solution of \eqref{CP} for $t\geq t_0$ satisfying \eqref{devtu1'}. Recall that $h$ satisfies equation \eqref{eq.h}.

\medskip

\noindent\emph{Step 1.} We show that there exists $a\in\RR$ such that
\begin{multline}
\label{condA1}
\forall \eta>0,\quad \|\nabla(h(t)-a e^{-e_0 t}\YYY)\|_{2}+\|\partial_th(t)+ae_0 e^{-e_0 t}\YYY\|_{2}\\
+\big\|h(s)-a e^{-e_0 s}\YYY\big\|_{\ell(t,+\infty)}\leq C_{\eta}e^{
-(2-\eta)e_0 t}.
\end{multline}
Indeed we will show
\begin{equation}
\label{decay.h}
\|\nabla h(t)\|_{2} +\|\partial_t h\|_2 \leq Ce^{-e_0t}, \quad \|R(h(t))\|_{\frac{2N}{N+2}}+\|D_x^{1/2}(R(h))\|_{N(t,+\infty)}\leq  C e^{-2e_0 t}.
\end{equation}
Assuming \eqref{decay.h}, we are in the setting of Proposition \ref{prop.linear}, with $\eps=R(h)$, $c_0=e_0$ and $c_1=2e_0$. The conclusion \eqref{dec.h2} of the lemma would then yield \eqref{condA1}. It remains to prove \eqref{decay.h}.

By Corollaries \ref{corol.est.expo} and \ref{Stri.expo} the bound on $R(h)$ in \eqref{decay.h} follows from the bound on $\|\nabla h(t)\|_{2}+\|\partial_t h(t)\|_2$, so that we only need to show this first bound. 
By Corollary \ref{Stri.expo}, assumption \eqref{devtu1'} implies $\|\nabla h(t)\|_{2}+\|\partial_t h(t)\|_2+\|h\|_{\ell(t,+\infty)}\leq C e^{-\gamma_0 t}.$ By Corollary \ref{corol.est.expo}
$$ \|R(h(t))\|_{L^{\frac{2N}{N+2}}}+\|\nabla(R(h))\|_{N(t,+\infty)}\leq  C e^{-2\gamma_0 t}.$$
Thus we can apply Proposition \ref{prop.linear}, showing that
$$ \|h(t)\|_{\hdot}\leq C\big(e^{-e_0t}+ e^{-\frac{3}{2}\gamma_0t}\big).$$
If $\frac{3}{2}\gamma_0\geq e_0$ the proof of \eqref{decay.h} is complete. If not, assumption \eqref{devtu1'} on $v$ holds with $\frac{3}{2}\gamma_0$ instead of $\gamma_0$, and an iteration argument yields \eqref{decay.h}.

\medskip
\noindent\emph{Step 2.} Let us show
\begin{equation}
\label{condA2}
\forall m>0,\; \exists t_0>0,\; \forall t\geq t_0,\quad \big\|\partial_t\big( u(t)- U^{\aexp}(t)\big)\big\|_{2}+\big\|\nabla\big(u-U^{\aexp}\big)\big\|_{\ell(t,+\infty)}\leq e^{-m t}.
\end{equation}
By Step $1$, \eqref{condA2} holds for $m=\frac{3}{2}e_0$. Let us assume \eqref{condA2} holds for some $m=m_1>e_0$. We will show that it holds for $m=m_1+\frac{e_0}{2}$, which will yield \eqref{condA2} by iteration and conclude the proof.\par
Write $h(t):=u(t)-W$, $h^{\aexp}(t):=U^{\aexp}(t)-W$ (so that in particular $u-U^{\aexp}=h-h^{\aexp}$). Then
$$ \partial_t^2 (h-h^{\aexp})+L(h-h^{\aexp})=-R(h)+R(h^{\aexp}).$$
By induction hypothesis $\|\partial_t(h(t)-h^{\aexp}(t))\|_{2}+\|\nabla(h(t)-h^{\aexp}(t))\|_{2}+\big\|\nabla\big(h-h^{\aexp}\big)\big\|_{\ell(t,+\infty)}\leq e^{-m_1 t}.$
According to Corollary \ref{corol.est.expo}
$$ \big\|D_x^{1/2}\big(R(h)-R(h^{\aexp})\big)\big\|_{N(t,+\infty)}+\big\|R(h(t))-R(h^{\aexp}(t))\big\|_{L^{\frac{2N}{N+2}}}\leq Ce^{-(m_1+e_0) t}.$$
Then by Proposition \ref{prop.linear}
$$ \|\partial_t(h(t)-h^{\aexp}(t))\|_{2}+\|\nabla(h(t)-h^{\aexp}(t))\|_{2}+\big\|\nabla\big(h-h^{\aexp}\big)\big\|_{\ell(t,+\infty)}\leq C e^{-\big(m_1+\frac{3}{4}e_0\big)t},$$
which yields \eqref{condA2} with $m=m_1+\frac{e_0}{2}$. By iteration, \eqref{condA2} holds for any $m>0$. Taking $m=(k_0+1)e_0$ (where $k_0$ is given by Proposition \ref{prop.fxpt}), we get that for large $t>0$
$$ \big\|\nabla\big(u-U_{k_0}^{\aexp}\big)\big\|_{Z(t,+\infty)}\leq e^{-(k_0+\frac{1}{2})e_0t}.$$
By uniqueness in Proposition \ref{prop.fxpt}, we get as announced that $u=W^a$ which concludes the proof of the lemma.
\end{proof}
\begin{proof}[Proof of Corollary \ref{corol.unic}]
Let $a\neq 0$ and chose $T_a$ such that $|a|e^{-e_0T_a}=1$.
By \eqref{CondWa2}, 
\begin{equation}
\label{important.unic}
\big\|\partial_t\big(U^{\aexp}(t+T_a)-W\mp e^{-e_0t} \YYY\big)\big\|_{2}+\big\|\nabla\big(U^{\aexp}(t+T_a)-W\mp e^{-e_0t} \YYY\big)\big\|_{2}\leq Ce^{-\frac 32 e_0t}.
\end{equation}
 Furthermore, $U^{\aexp}(\cdot+T_a)$ satisfies the assumptions of Lemma \ref{lem.unic}, which shows that there exists $a'\in \RR$ such that $U^{\aexp}(\cdot+T_a)=U^{\aexp'}$.
By \eqref{important.unic}, $a'=1$ if $a>0$ and $a'=-1$ if $a<0$, hence \eqref{W+-=Wa}. 
\end{proof}

Let us turn to the proof of Theorem \ref{th.classif}. Point \eqref{theo.crit} is an immediate consequence of the variational characterization of $W$ (\cite{Au76}, \cite{Ta76}). \par

Let us show \eqref{theo.sub}. Let $u$ be a solution of \eqref{CP} such that $E(u_0,u_1)=E(W,0)$ and $\|\nabla u_0\|_{2}<\|\nabla W\|_{2}$. Assume that $\|u\|_{S(\RR)}=\infty$. Replacing if necessary $u(t)$ by $u({-t})$, we may assume that $\|u\|_{S(0,+\infty)}=\infty$. Then (replacing $u$ by $-u$ if necessary), Proposition \ref{prop.CVexpu} shows that there exist $\mu_0>0,\;x_0\in\RR^N$, and $c,C>0$ such that $\|\partial_t u(t)\|_{2}+\|\nabla(u(t)-W_{\mu_0,x_0})\|_{2}\leq Ce^{-ct}.$
This shows that $u_{\mu_0^{-1},-\mu_0^{-1}x_0}$ fullfills the assumptions of Lemma \ref{lem.unic} with $\|\nabla u_0\|_{2}<\|\nabla W\|_{2}$. Thus there exists $a<0$ such that $u_{\mu_0^{-1},-\mu_0^{-1}x_0}=U^{\aexp}$. By Corollary \ref{corol.unic},
$$ u(t)=W^-_{\mu_0,x_0}(t+T_a),$$
which shows \eqref{theo.sub}.\par
The proof of \eqref{theo.super} is similar. Indeed if $u$ is a solution of \eqref{CP} defined on $[0,+\infty)$ and such that $E(u_0,u_1)=E(W,0)$, $\|\nabla u_0\|_{2}>\|\nabla W\|_{2}$ and $u_0\in L^2$, then by Proposition \ref{prop.sur.L2},  $\|u(t)-W_{\mu_0,x_0}\|_{\hdot}\leq Ce^{-ct}$, which shows using Lemma \ref{lem.unic} and the same argument as before that for some $T\in \RR$,
$$ u(t)=W^+_{\mu_0,x_0}(t+T).$$
The proof of Theorem \ref{th.classif} is complete.
\end{proof}

\subsection{Blow-up of $W^+$}
\label{subW+}
In this section we prove that the function $W^+$ blows-up in finite negative time.
% \subsection{Case $N=3$}
% Recall the explicit formula of the wave semi-group in dimension $3$: if $u$, $f$ are smooth and
% $\partial_t^2 u-\Delta u=f,\quad u_{\restriction t=0}=u_0,\quad \partial_t u_{\restriction t=0}=u_1$ then
% \begin{multline}
% \label{formulaN3}
% \qquad u(t,x)=\frac{t}{4\pi} \int_{S^2} u_1(x+t\omega)d\omega+\frac{\partial}{\partial t}\left(\frac{t}{4\pi}\int_{S^2} u_0(x+t\omega)d\omega\right)\\
% +\frac{1}{4\pi}\int_0^t (t-s)\int_{S^2} g(x+(t-s)\omega,s)d\omega\, ds\qquad
% \end{multline}
% \subsection{Case $N=4$}

We will argue by contradiction, assuming that $W^+$ is globally defined. 
As before, we will write $\DD(t):=\int \left|\nabla W^+\right|^2-\int \left|\nabla W\right|^2+\int \left|\partial_tW^+\right|^2$. Let $\varphi\in C^{\infty}_0(\RR^N)$, radial such that $0\leq \varphi(x)\leq 1$, $\varphi(x)=1$ if $|x|\leq 1$ and $\varphi(x)=0$ if $|x|\geq 2$. Let $\varphi_R(x)=\varphi(x/R)$. Consider
$$ y_R(t):=\int (W^+)^2\varphi_R.$$
We start with some estimates on $y'_R$ and $y''_R$.
\medskip

\noindent\emph{Step 1. Estimates for large positive $t$.} Let us show that 
there exists $R_0,\,t_0,\,c_0>0$ such that for all $R\geq R_0$,
\begin{gather}
\label{y''Rt>t0}
\forall t\geq t_0, \quad y''_R(t)\geq 
4\frac{N-1}{N-2}\int \left(\partial_tW^+\right)^2+\frac{2}{N-2}\left(\int\left|\nabla W^+\right|^2-\left|\nabla W)\right|^2\right)\geq \frac{2}{N-2}\DD(t).\\
\label{estimt0}
y'_R(t_0)\leq -2c_0,\qquad y_R(t_0)\leq \left\{\begin{array}{ll}\frac{C}{R}&\text{ if }N=3\\ C\log R&\text{ if }N=4\\ C&\text{ if }N=5\end{array}\right.
\end{gather}

By explicit computations and $E(W^+,\partial_t W^+)=E(W,0)$, we have 
\begin{gather}
\label{y'W+}
y'_R(t)=2 \int W^+\partial_t W^+\varphi_R,\\
\label{y''W+}
y''_R(t)=4\frac{N-1}{N-2}\int_{\RR^N} \left(\partial_tW^+\right)^2+\frac{4}{N-2}\left(\int_{\RR^N} \left|\nabla W^+\right|^2-\int_{\RR^N} |\nabla W|^2\right)\\
\notag \qquad\qquad+\int \Delta \varphi_R \,\left(W^+\right)^2+2\int(1-\varphi_R)\left(\left|\nabla W^+\right|^2-\left|W^+\right|^{2*}-\left|\partial_t W^+\right|^2\right).
\end{gather}
Replacing $W$ by $W^+$ in the preceding expressions, we see that the corresponding $y_R$ must be constant, so that in particular,
$$\int \Delta \varphi_R \left(W\right)^2+2\int(1-\varphi_R)\left(\left|\nabla W\right|^2-\left|W\right|^{2*}\right)=0.$$
By \eqref{CondW_A} in Proposition \ref{prop.fxpt}, $W^+=W+e^{-e_0t}\YYY+r_1$ with $\|\nabla r_1\|_2+\|\partial_t r_1\|_2\leq Ce^{-2e_0t}$. Developping $W^+$, we get, recalling that $\YYY$ is in $\SSS$ and that $\varphi_R(x)=1$ for $|x|\leq R$,
\begin{gather*}
\left|\int \Delta \varphi_R \left(W^+\right)^2+2\int(1-\varphi_R)\left(\left|\nabla W^+\right|^2-\left|W^+\right|^{2*}-\left|\partial_t W^+\right|^2\right)\right|\leq C\left(\frac{e^{-e_0t}}{R}+e^{-2e_0t}\right),\\
\int \left|\nabla W^+(t)\right|^2-\int |\nabla W|^2=2e^{-e_0t}\int W\YYY+O(e^{-2_0t}).
\end{gather*}
Thus by \eqref{y''W+}
\begin{multline*}
y''_R(t)\geq 4\frac{N-1}{N-2}\int \left(\partial_tW^+\right)^2+\frac{2}{N-2}\left[\int \left|\nabla W^+(t)\right|^2-\int |\nabla W|^2\right]\\
+\frac{4}{N-2}e^{-e_0t}\int W\YYY-C\left(\frac{e^{-e_0t}}{R}+e^{-2e_0t}\right).
\end{multline*}
As $\int W\YYY>0$, we get \eqref{y''Rt>t0} for $R\geq R_0$.

Now, fixing $R\geq R_0$, we get, using that $y'_R(t)$ tends to $0$ at infinity,
$$y'_R(t_0)=-\int_{t_0}^{+\infty} y''_R(t)dt\leq -4\frac{N-1}{N-2}\int_{t_0}^{+\infty}|\partial_t W^+(t)|^2dt,$$
which yields the first assertion in \eqref{estimt0}.

It remains to shows the second assertion in \eqref{estimt0}. Note that $W\approx \frac{C}{|x|^{N-2}}$ at infinity, so that 
\begin{equation}
\label{estimWL2}
\lim_{t\rightarrow +\infty} y_R(t)=\int W^2\varphi_R\approx\left\{
\begin{aligned}
R&\text{ if }N=3,\\
\log R&\text{ if }N=4,\\
\frac{1}{R}&\text{ if }N=5.
\end{aligned}\right.
\end{equation}
Furthermore, $|y'_R(t)|\leq C\|\partial_t W^+\|_2\sqrt{y_R(t)}\leq Ce^{-e_0t}\sqrt{y_R(t)}$, and thus
$$ \sqrt{y_R(t_0)}-\lim_{t\rightarrow \infty}\sqrt{y_R(t)}\leq C\int_{t_0}^{\infty} e^{-e_0t}dt\leq C,$$
which yields together with \eqref{estimWL2}, the second assertion in \eqref{estimt0}. Step 1 is complete

\medskip

\noindent\emph{Step 2. Estimates for $t_0-\eps_0R\leq t\leq t_0$.} As a consequence of the preceding estimates, we show that there exists $C_0>0$ such that for $R\geq R_0$ and $t_0-\eps_0 R\leq t\leq t_0$,
\begin{gather}
\label{estimyW+}
y''_R(t)\geq 4\frac{N-1}{N-2}\int \left(\partial_tW^+\right)^2
+\frac{4}{N-2}\left(\int\left|\nabla W^+\right|^2-\int |\nabla W|^2\right)-\frac{C_0}{R^{N-2}},\\
\label{estim.deriv}
y'_R(t)\leq -c_0,
\end{gather}
where $\eps_0:=\frac{c_0}{2C_0}$.

Estimate \eqref{estim.deriv} follows from \eqref{estimyW+} by integration in time and the fact that $y'_R(t_0)\leq -2c_0$ for $R$ large. Let us show \eqref{estimyW+}.

By \eqref{y''W+}, it is sufficient to show
\begin{equation}
\label{estimaterW+}
\int_{|x|\geq R} r(W^+)(t)dx\leq \frac{C}{R^{N-2}}.
\end{equation}
where $r(W^+)$ is defined in \eqref{defe(u)}. Writing $W^+=W+O(e^{-e_0t})$, and using that $W\approx \frac{1}{|x|^{N-2}}$, $|\nabla 
W|\approx \frac{1}{|x|^{N-1}}$, as $|x|\rightarrow +\infty$, we get for large $R$
$$ \int_{|x|\geq \frac{R}{6}} r(W^+)(t_0+R/4)\leq \frac{C}{R^{N-2}}+Ce^{-\frac{R}{4}e_0}\leq \frac{C}{R^{N-2}}.$$
Hence by finite speed of propagation (Proposition \ref{prop.criterion} \eqref{fsop}), and taking $R$ large,
\begin{equation}
\label{t<t_0}
\forall t\leq t_0,\quad \int_{|x|\geq \frac{R}{2}+t_0-t} r(W^+)(t)\leq \frac{C}{R^{N-2}},
\end{equation}
which yields \eqref{estimaterW+}, and thus \eqref{estimyW+}.

\medskip

\noindent\emph{Step 3. Differential inequalities.}
Let us show that there exists a constant $C>0$ such that
\begin{equation}
\label{inegyRy'R}
\forall t\in \left[t_0-\frac{\eps_0}{2}R,t_0\right],\quad 0\leq -y'_R(t)\leq \frac{C}{R}y_R(t_0).
\end{equation}
By \eqref{estimt0}, if $N=4$ or $N=5$, $\frac{y_R(t_0)}{R}\rightarrow 0$ as $R\rightarrow\infty$ and $2c_0\leq -y_R'(t_0)$. Thus \eqref{inegyRy'R} gives an immediate contradiction in this cases. The remaining case $N=3$, which is the limit case in \eqref{inegyRy'R} will be treated in Steps 4 and 5.

By Step 2 and the fact that $N\geq 3$ we have, for $t_0-\eps_0R\leq t\leq t_0$,
\begin{multline}
\label{inegdiffy'R}
\qquad y'_R(t)^2=4\left( \int \varphi_R W^+\partial_t W^+\right)^2\\
\leq 4\int \varphi_R \left(W^+\right)^2\int \varphi_R(\partial_t W^+)^2\leq \frac{N-2}{N-1}y_R(t)\left(y''_R(t)+\frac{C_0}{R}\right).\qquad
\end{multline}
\begin{claim}[Differential inequality argument]
Let $T>0$ and $y\in C^{2}([0,T])$. Assume 
\begin{equation*}
\forall t\in [0,T], \quad y'(t)\geq c_0>0,\; y(t)>0
\end{equation*}
and for some $C_1>0$,
\begin{equation}
\label{InegDiff}
\forall t\in [0,T],\quad y'(t)^2 \leq \frac{N-2}{N-1}y(t)\left[y''(t)+\frac{C_1}{T}\right].
\end{equation}
Then there is a constant $C>0$ (depending only on $N$, $c_0$ and $C_1$, but not on $T$, such that
\begin{equation*}
\forall t\in \left[0,\frac{T}{2}\right],\quad Ty'(t)\leq Cy(0).
\end{equation*}
\end{claim}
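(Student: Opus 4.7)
The plan is to introduce the logarithmic derivative $p(t):=y'(t)/y(t)>0$ and convert the quadratic hypothesis into a scalar Riccati-type inequality, which I will exploit by a blow-up argument. Rewriting the hypothesis as $y''\geq (y')^2/(ay)-C_1/T$ with $a=(N-2)/(N-1)$ and using $y''/y=p'+p^2$, a direct computation yields
\[ p'(t)\geq \frac{p(t)^2}{N-2}-\frac{C_1}{y(t)T}.\]

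The key step, and what I expect to be the main obstacle, will be to establish the pointwise upper bound
\[ p(t)\leq \max\!\left(\frac{2(N-2)}{T-t},\;\sqrt{\frac{2(N-2)C_1}{y(t)T}}\right) \qquad\text{for all } t\in [0,T),\quad (\star)\]
which I would prove by contradiction. If $p(t^*)$ exceeds both quantities on the right at some $t^*$, then the second excess gives $p(t^*)^2>2(N-2)C_1/(y(t^*)T)$, and the region $\{p^2>2(N-2)C_1/(yT)\}$ is forward-invariant: on it $p'>0$ (so $p$ is increasing), while $y$ is increasing (so the right-hand side is decreasing). There one has the pure Riccati inequality $p'\geq p^2/(2(N-2))$, and integrating $-(1/p)'\geq 1/(2(N-2))$ forces blow-up of $p$ by time $t^*+2(N-2)/p(t^*)$; the other excess gives $2(N-2)/p(t^*)<T-t^*$, contradicting $y\in C^2([0,T])$.

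Once $(\star)$ is in hand, an a priori compatibility estimate $y(0)\geq c_*T$ drops out by evaluating $(\star)$ at $t=0$ and using $p(0)\geq c_0/y(0)$: the inequality $c_0/y(0)\leq \max\bigl(2(N-2)/T,\sqrt{2(N-2)C_1/(y(0)T)}\bigr)$ splits into two cases, each of which forces
\[ y(0)\geq c_*T\qquad\text{with}\qquad c_*:=\min\!\left(\frac{c_0}{2(N-2)},\;\frac{c_0^{\,2}}{2(N-2)C_1}\right)>0.\]
This compatibility is precisely what dissolves the apparent dichotomy in $(\star)$ into a uniform estimate.

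Finally, for $t\in[0,T/2]$ we have $T-t\geq T/2$ and $y(t)\geq y(0)\geq c_*T$, so $(\star)$ directly gives
\[ Tp(t)\leq 4(N-2)+\sqrt{2(N-2)C_1/c_*}=:C_3,\]
and integrating $(\star)$ over $[0,t]$ likewise bounds $\log(y(t)/y(0))$ by some constant $C_2=C_2(N,c_0,C_1)$. The conclusion follows:
\[ Ty'(t)=T\,p(t)\,y(t)\leq C_3\,e^{C_2}\,y(0)\qquad\text{on }[0,T/2].\]
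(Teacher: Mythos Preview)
Your proof is correct and takes a genuinely different route from the paper's. The paper divides the hypothesis by $y\,y'$ to obtain $\tfrac{y'}{y}\leq \tfrac{N-2}{N-1}\bigl(\tfrac{y''}{y'}+\tfrac{C_1}{Tc_0}\bigr)$, integrates once in $t$ to see that $y'(s)/y(s)^{(N-1)/(N-2)}$ is (up to a constant factor) bounded by $y'(t)/y(t)^{(N-1)/(N-2)}$ for $s\leq t$, and then integrates the latter explicitly---it is the exact derivative of $-(N-2)\,y^{-1/(N-2)}$---from $s$ to $T$ to obtain the pointwise bound $\tfrac{y'(s)}{y(s)}(T-s)\leq (N-2)e^{C_1/c_0}$ directly. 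A final integration in $s$ gives $y(t)\leq Cy(0)$ on $[0,T/2]$, and the conclusion follows. Your Riccati argument on $p=y'/y$ reaches essentially the same pointwise bound $(\star)$ by a blow-up alternative instead of by exploiting this exact integrability, and it has the pleasant by-product of isolating the compatibility estimate $y(0)\gtrsim T$ explicitly. The paper's approach is shorter and leans on the specific exponent $(N-1)/(N-2)$; yours is more robust (any coefficient strictly less than $1$ in place of $\tfrac{N-2}{N-1}$ would do) and makes the mechanism---finite-time Riccati blow-up as an obstruction---more transparent.
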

\begin{proof}
By \eqref{InegDiff}, 
\begin{equation*}
\frac{y'(t)}{y(t)}\leq \frac{N-2}{N-1}\left( \frac{y''(t)}{y'(t)}+\frac{C_1}{Ty'(t)}\right)\leq \frac{N-2}{N-1}\left( \frac{y''(t)}{y'(t)}+\frac{C_1}{Tc_0}\right).
\end{equation*}
Then, integrating between $s$ and $t$, $0\leq s\leq t\leq T$,
\begin{equation*}
\log\frac{y(t)}{y(s)}\leq \frac{N-2}{N-1}\left(\log\frac{y'(t)}{y'(s)}+\frac{C_1}{c_0}\right),\text{ i.e}\quad
%\frac{y(t)}{y(0)}&\leq e^{\frac{\alpha \eps t_0}{c_0}}\frac{y'(t)^{\alpha}}{y'(0)^{\alpha}}\\
e^{-\frac{C_1}{c_0}} \frac{y'(s)}{y(s)^{\frac{N-1}{N-2}}}\leq \frac{y'(t)}{y(t)^{\frac{N-1}{N-2}}}.
\end{equation*}
Integrating with respect to $t$ between $s$ and $T$, we get
\begin{equation*}
e^{-\frac{C_1}{c_0}}\frac{y'(s)}{y(s)^{\frac{N-1}{N-2}}}(T-s)\leq -(N-2)\left(\frac{1}{y(T)^{\frac{1}{N-2}}}-\frac{1}{y(s)^{\frac{1}{N-2}}}\right)\leq\frac{N-2}{y(s)^{\frac{1}{N-2}}},
\end{equation*}
which yields 
\begin{equation}
\label{inegdiff2}
\forall s\in[0,T],\quad \frac{y'(s)}{y(s)}(T-s)\leq (N-2)e^{\frac{C_1}{c_0}}\leq C.
\end{equation}
Integrating \eqref{inegdiff2} between $0$ and $t\in\left[0,\frac T2\right]$ we get
\begin{equation*}
\log(y(t))\leq \log(y(0))+C\log \left(\frac{T}{T-t}\right)\leq \log(y(0))+C\log 2, \text{ i.e.}\quad y(t)\leq C y(0),
\end{equation*}
which, gives together with \eqref{inegdiff2}, the announced result.
\end{proof}
By \eqref{inegdiffy'R} and the preceding claim on the function $y=y_R(t_0-t)$, with $R$ large, $T=\eps_0 R$ and $C_1=C_0\eps_0$, we get \eqref{inegyRy'R}.

\medskip

\noindent\emph{Step 4.} Let us show that there exists a constant $C>0$ such that
\begin{equation}
\label{y''<D}
\forall R\geq 1,\quad \forall t\in  \RR,\quad |y''_R(t)|\leq C\DD(t).
\end{equation}

Indeed 
\begin{equation}
\label{expy''}
y''_R(t)=8\int\left(\partial_tW^+\right)^2\varphi_R+
2\int \left(\left|\nabla W^+\right|^2-\left|W^+\right|^{6}-\left|\partial_t W^+\right|^2\right)\varphi_R+\int \left(W^+\right)^2\Delta\varphi_R.
\end{equation}
Let us fix $t\in \RR$. First assume that $\DD(t)\leq \delta_0$. Then by Lemma \ref{lem.est.modul}, there exists $\lambda_0$, $x_0$ such that $\left\|\nabla(W^+(t)-W_{\lambda_0,x_0})\right\|_2+\left\|\partial_t W^+(t)\right\|_2\leq C\DD(t).$
Noting that 
$$8\int\left(\partial_tW_{\lambda_0,x_0}\right)^2\varphi_R+
2\int \left(\left|\nabla W_{\lambda_0,x_0}\right|^2-\left|W_{\lambda_0,x_0}\right|^{6}\right)\varphi_R+\int \left(W_{\lambda_0,x_0}\right)^2\Delta\varphi_R=0,$$
we get \eqref{y''<D} for $\DD(t)\leq \delta_0$ by developping \eqref{expy''}.

Now assume that $\DD(t)\geq \delta_0$. Thus $\frac{2\|\nabla W\|^2_2}{\delta_0}\DD(t_0)\geq \|\nabla W\|_2^2$. As a consequence,
$$ \left(1+\frac{2\left\|\nabla W\right\|_2^2}{\delta_0}\right) \DD(t)\geq \left\|\nabla W^+\right\|^2_2+\left\|\partial_t W^+\right\|_2^2.$$
By the energy equality $E(W^+,\partial_t W_+)=E(W,0)$, $\|W^+\|_6^6=3\|\nabla W^+\|^2+3\|\partial_t W^+\|^2+E(W,0)$. Thus there exists a constant $C>0$ such that
$$ \DD(t_0)\geq \delta_0\Longrightarrow C\DD(t)\geq\left\|\nabla W^+\right\|^2_2+\left\|W^+\right\|_{6}^6+\left\|\partial_t W^+\right\|_2^2,$$
which shows, together with \eqref{expy''}, implies estimate \eqref{y''<D} in the case $\DD(t)\geq \delta_0$ 

\medskip

\noindent\emph{Step 5. End of the proof in the case $N=3$.}
Let us first show
\begin{equation}
\label{intDW+}
\int_{-\infty}^{+\infty} \DD(t)dt<\infty.
\end{equation}
Estimates \eqref{estimt0} and \eqref{inegyRy'R} give a constant $C$, independant or $R$, such that
$0\leq -y'_R(t_0-\eps_0 R)\leq C.$
Thus by \eqref{y''Rt>t0} and \eqref{estimyW+},
\begin{align*}
\int_{t_0-\eps_0R}^{+\infty}\DD(t)dt&=\int_{t_0-\eps_0R}^{t_0}\DD(t)+\int_{t_0}^{+\infty}\DD(t) 
\leq \int_{t_0-\eps_0R}^{t_0}\left(y''_R(t)+\frac{C}{R}\right)+\int_{t_0}^{+\infty}y''_R(t)\\
&\leq
C\eps_0+\int_{t_0-\eps_0R}^{+\infty} y''_R(t)dt =C\eps_0-y'_R\left(t_0-\eps_0R\right)\leq C.
\end{align*}
Letting $R$ tends to $\infty$ we get \eqref{intDW+}.

Let $M\gg 1$. Let $t\in [t_0-M,t_0]$. By \eqref{estimyW+}, for $R\geq R_0$, $y''_R(t)\geq 4\DD(t)-\frac{C_0}{R}$. Taking $R_M\gg 1$ so that $\min_{t_0-M\leq t\leq t_0} \DD(t)\geq \frac{C_0}{R_M}$, we get that $y''_{R_M}(t)\geq 2\DD(t)$ for $t_0-M\leq t\leq t_0$ and thus, in view of \eqref{y''Rt>t0},
\begin{equation}
\label{presquefini}
\forall t\geq t_0-M,\;\quad y''_{R_M}(t)\geq 2\DD(t).
\end{equation}
By \eqref{intDW+}, there exists a sequence $t_n\rightarrow -\infty$ such that $\DD(t_n)\rightarrow 0$. As a consequence, $y'_R(t_n)\rightarrow 0$. We have
$y'_R(t_n)=-\int_{t_n}^{t_0-M} y''_R(t)dt- \int_{t_0-M}^{+\infty} y''_R(t)dt$, which yields by \eqref{y''<D} and \eqref{presquefini}
$$ \int_{t_0-M}^{+\infty} \DD(t)dt\leq C \int_{t_n}^{t_0-M} \DD(t) dt+\frac{1}{2}|y'_R(t_n)|.$$
Letting $n$ tends to infinity, we obtain
$$ \forall M\gg 1,\quad \int_{t_0-M}^{+\infty} \DD(t)dt\leq C \int_{-\infty}^{t_0-M} \DD(t) dt.$$
Note that by \eqref{intDW+}, both integrals in the preceding inequality are finite. Letting $M$ tends to $+\infty$, we get
$\int_{-\infty}^{+\infty} \DD(t)dt=0$. This shows that $\DD(t)=0$ for all $t$, which is a contradiction. 
\qed
\appendix

\section{Estimates on the modulation parameters}
\label{Appendix.Decompo}
In this appendix we prove Lemma \ref{lem.est.modul}.
\noindent\emph{Proof of \eqref{est.modul.1}.} In the proof of estimate \eqref{est.modul.1}, $t$ is just a parameter that we will systematically omit.\par 
Developping the equality $E\big((1+\alpha) W+\tf,\partial_t u\big)=E(W,0)$, we get, with \eqref{W+f},
\begin{equation}
\label{es.mo1}
Q(\alpha W+\tf)+\frac 12 \|\partial_t u\|^2_2=O\big(\|\nabla(\alpha W+\tf)\|^3_2\big).
\end{equation}
By the orthogonality of $\tf$ with $W$ in $\hdot$, and the equation $\Delta W+W^{\frac{N+2}{N-2}}=0$ we have
\begin{equation*}
 \int \nabla W \cdot\nabla \tf=\int W^{\frac{N+2}{N-2}}\tf =0.
\end{equation*}
Thus $W$ and $\tf$ are $Q$-orthogonal and $Q(\alpha W+\tf)=-\alpha^2|Q(W)|+Q\big(\tf\big)$.
Hence
\begin{equation}
\label{es.mo2}
-\alpha^2|Q(W)|+Q\big(\tf\big)+\frac 12 \|\partial_t u\|^2_2=O\big(\|\nabla(\alpha W+\tf) \|^3_2\big).
\end{equation}
Now
\begin{equation}
\label{es.mo3}
\|\nabla(\alpha W+\tf)\|^2_2=\alpha^2\|\nabla W\|^2_2+\|\nabla \tf\|_2^2.
\end{equation}
If $\delta_0$ is small, so is $\|\nabla (\alpha W+\tf)\|_{L^2}$, so that by \eqref{es.mo2} and Claim \ref{coercivity}, there exists $c>0$ such that $\alpha^2\geq c \left( \|\nabla \tf\|_{2}^2+\|\partial_t u\|_{2}^2\right)$. Thus by \eqref{es.mo3},
$\big\|\nabla(\alpha W+\tf)\big\|_2^2\approx \alpha^2.$
Using again \eqref{es.mo2}, we get
$$ \alpha^2 \approx \|\nabla \tf\|_{2}^2+\|\partial_t u\|_2^2.$$
We have
\begin{equation*}
\|\nabla u\|_2^2-\|\nabla W\|_2^2=\|\nabla(W+\alpha W+\tf)\|_2^2-\|\nabla W\|_2^2=(2\alpha+\alpha^2) \|\nabla W\|_2^2+\big\|\nabla \tf\big\|_2^2\approx\alpha,
\end{equation*}
which concludes the proof of \eqref{est.modul.1}.

\medskip

\noindent\emph{Proof of \eqref{est.modul.3}.} Let $v(t):=u_{\mu,\xx}(t)$. Then
\begin{equation}
\label{u.module}
u=\mu^{\frac{N-2}{2}} v\big(t,\mu(x-\xx(t))\big).
\end{equation}
Differentiating \eqref{u.module}, and writing $y=\mu(x-\xx(t))$, we get
$$ \partial_t u(t,x)=\left(\frac N2-1\right)\frac{\mu'}{\mu} u(t,x)+
\mu^{\frac {N-2}{2}}\left[ \mu' (x-\xx(t))\cdot\nabla_y v\big(t,y\big)-\mu\xx'\cdot \nabla v(t,y)+\partial_t v(t,y)\right].$$
Recall that $v=W+\alpha W+\tf$. Multiplying the preceding equation by $\mu^{-\frac N2}$, we obtain
\begin{multline*}
w(t,y):=\frac{1}{\mu^{\frac N2}}\partial_t u\big(t,\frac{y}{\mu}+\xx\big)\\
=\left(\frac{N-2}{2}\right) \frac{\mu'}{\mu^2}(W+\alpha W+\tf)+\frac{\mu'}{\mu^2}y\cdot \nabla_y (W+\alpha W+\tf)-\xx'\cdot \nabla_y(W+\alpha W+\tf)+\frac{1}{\mu}\frac{\partial}{\partial t}(\alpha W+\tf).
\end{multline*}
Hence
\begin{gather*}
w=-\frac{\mu'}{\mu^2}\tW-\sum_{j=1}^{N} \xx_j'(t)W_j+\frac{\alpha'}{\mu}W+(R)\\
(R):=\frac{\mu'}{\mu^2}\left(\frac {N-2}{2}+y\cdot\nabla_y\right)(\alpha W+\tf)-\sum_{j=1}^{N} x_j'(t)\partial_{y_j}(\alpha W+\tf)+\frac{1}{\mu}\partial_t \tilde{f}.
\end{gather*}
Hence (using the orthogonality in $\hdot$ of $W$, $\tW$, $W_1$,\ldots,$W_N$)
\begin{equation}
\label{scalarprod}
\left\{
\begin{aligned}
 \int w\Delta W&=-\frac{\alpha'}{\mu} \|\nabla W\|^2_2+\int (R)\Delta W\\
\int w\Delta \tW&=-\frac{\mu'}{\mu^2}\|\nabla \tW\|^2_2+\int (R)\Delta \tW\\
\int w\Delta W_{j}&=-x_j'(t)\|\nabla W_{j}\|^2_2+\int (R)\Delta W_{j},\quad j=1\ldots N.
\end{aligned}
\right.
\end{equation}
Let
\begin{equation*}
\dd_1(t):=\dd(t)+\left|\frac{\alpha'}{\mu}\right|+\left|\frac{\mu'}{\mu^2}\right|+\left|\xx'(t)\right|.
\end{equation*}
Then, noting that for all $t$, $\partial_t \tf(t)\in H^{\bot}$ and using estimate \eqref{est.modul.1}, we have
$$\left|\int (R)\Delta W\right|+\left|\int (R)\Delta (-iW)\right|+\left|\int (R)\Delta \tW\right|+\sum_{j=1}^N\left|\int (R)\Delta W_{j}\right|\leq C \dd_1^2(t).$$
Summing up estimates \eqref{scalarprod} and using that $\|w(t)\|_2=\|\partial_t u(t)\|_2\lesssim \dd(t)$, we get
$$ \dd_1(t)\leq \dd_1^2(t)+\dd(t)$$
which yields \eqref{est.modul.3} for small $\dd(t)$.
\qed
\section{Some technical estimates}
\label{appendixest}
In this appendix we proof Lemma \ref{lem.est}.

We will need the following version of H\"older estimate for fractional Sobolev spaces 
\begin{claim}
\label{KPV}
\begin{enumerate}
Let $p,p_2,p_3,p_4\in(1,\infty)$, $p_1\in (1,\infty]$, such that $\frac{1}{p_1}+\frac{1}{p_2}=\frac{1}{p}$, and $\frac{1}{p_3}+\frac{1}{p_4}=\frac{1}{p}$. 
\item \label{produit} $\displaystyle  \big\|D_x^{1/2}(fg) \big\|_{p}\lesssim \big\|f\big\|_{p_1} \big\|D_x^{1/2}g \big\|_{p_2}+ \big\|D_x^{1/2}f \big\|_{p_3}\big\|g\big\|_{p_4}.$
\item \label{composition1} Let $F\in C^{1}(\RR)$ such that $F(0)=0$. Then
$$  \big\|D_x^{1/2}F(f) \big\|_p\lesssim \big\|F'(f)\big\|_{p_1} \big\|D_x^{1/2}f \big\|_{p_2}$$
\item \label{composition2} Let $m\in \NN^*$ and $r_1,r_2,r_3\in(1,\infty)$ such that $\frac{1}{r_1}+\frac{1}{r_2}+\frac{1}{r_3}=1$. Then
\begin{multline*}
 \big\|D_x^{1/2}(f^{2m}-g^{2m}) \big\|_{p}\lesssim \left(\big\|f^{2m-1}\big\|_{p_1}+\big\|g^{2m-1}\big\|_{p_1}\right) \big\|D_x^{1/2}(f-g) \big\|_{p_2}\\
+ \left(\big\|f^{2m-2}\big\|_{r_1}+\big\|g^{2m-2}\big\|_{r_1}\right)\left( \big\|D_x^{1/2}f \big\|_{r_2}+ \big\|D_x^{1/2}g \big\|_{r_2}\right)\big\|f-g\big\|_{r_3}.
\end{multline*}
\end{enumerate}
\end{claim}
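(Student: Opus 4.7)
All three inequalities are standard fractional calculus estimates in the Kato--Ponce / Christ--Weinstein tradition: (a) is the fractional Leibniz rule, (b) is the fractional chain rule, and (c) combines the two via a polynomial factorization. My plan is to invoke (a) and (b) from the harmonic analysis literature (Kenig--Ponce--Vega and Christ--Weinstein respectively), then derive (c) from them by elementary algebraic manipulation.

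\textbf{Plan for (a).} The idea is to perform a Littlewood--Paley decomposition $f=\sum_j P_j f$, $g=\sum_k P_k g$, and split $fg$ into low-high, high-low, and balanced frequency interactions. On the low-high piece $P_{\le k-2}f\cdot P_k g$, the operator $D_x^{1/2}$ effectively acts only on the high-frequency factor, and the resulting sum is bounded by $\|f\|_{p_1}\|D_x^{1/2}g\|_{p_2}$ using H\"older together with the $L^{p_2}$-boundedness of the Littlewood--Paley square function; the high-low piece is symmetric and yields $\|D_x^{1/2}f\|_{p_3}\|g\|_{p_4}$. The balanced piece is handled by a Cauchy--Schwarz in the frequency index and absorbed into either of the two terms. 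The endpoint $p_1=\infty$ is the only subtle point: there $D_x^{1/2}$ does not map $L^\infty\to L^\infty$, so one must replace $\|f\|_\infty$ by $\|\sup_k |P_{\le k}f|\|_\infty \lesssim \|f\|_\infty$ via the Hardy--Littlewood maximal inequality, as in Kenig--Ponce--Vega.

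\textbf{Plan for (b).} Use the pointwise Coifman--Meyer type representation
$$ D_x^{1/2}F(f)(x) \;\sim\; \int \frac{F(f(x))-F(f(y))}{|x-y|^{N+1/2}}\,dy, $$
and apply the mean value theorem $F(f(x))-F(f(y))=\int_0^1 F'\bigl(f(y)+\theta(f(x)-f(y))\bigr)\,d\theta\cdot(f(x)-f(y))$. One bounds $|F'(f(y)+\theta(f(x)-f(y)))|$ pointwise by $M(F'(f))(x)+M(F'(f))(y)$ and absorbs the remaining kernel $(f(x)-f(y))/|x-y|^{N+1/2}$ into $D_x^{1/2}f$ in the usual sense. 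H\"older in the $p_1,p_2$ exponents concludes. Alternatively, cite Christ--Weinstein directly.

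\textbf{Plan for (c).} Factor
$$ f^{2m}-g^{2m} \;=\; (f-g)\sum_{j=0}^{2m-1} f^j g^{2m-1-j}. $$
Applying (a) to $D_x^{1/2}$ of this product with the group $(f-g)$ against the polynomial $P:=\sum_{j=0}^{2m-1} f^j g^{2m-1-j}$ gives two contributions: one with $D_x^{1/2}(f-g)$ times $\|P\|_{L^{p_1}}$, and the other with $\|f-g\|_{L^{r_3}}$ times $\|D_x^{1/2}P\|_{L^{q}}$ for an appropriate $q$. The pointwise bound $|P|\lesssim |f|^{2m-1}+|g|^{2m-1}$ handles the first term. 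For the second, apply (b) with $F(t)=t^{j}$ (and with $t^{2m-1-j}$) monomial by monomial: $\|D_x^{1/2}(f^j g^{2m-1-j})\|$ is controlled by $\|f\|^{j-1}\|g\|^{2m-1-j}\|D_x^{1/2}f\|+\cdots$, which after a single H\"older allocation using the triple $(r_1,r_2,r_3)$ produces the stated combination $(\|f^{2m-2}\|_{r_1}+\|g^{2m-2}\|_{r_1})(\|D_x^{1/2}f\|_{r_2}+\|D_x^{1/2}g\|_{r_2})\|f-g\|_{r_3}$. The main obstacle, modulo quoting (a) and (b), is bookkeeping: one must verify that the same exponent triple $(r_1,r_2,r_3)$ works uniformly for every monomial, which is immediate once one notes that $\|f^j\|_{L^{p_1 q/(j)}}\leq \|f\|_{L^{p_1(2m-1)}}^{j}$ and similarly for $g$.
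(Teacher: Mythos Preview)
Your proposal is correct and matches the paper's approach: the paper simply cites \cite[Theorems A.7, A.8, A.9 and A.12]{KePoVe93} for parts (a) and (b), and states that (c) is a consequence of (a), which is exactly your factorization $f^{2m}-g^{2m}=(f-g)\sum_{j}f^{j}g^{2m-1-j}$ followed by the Leibniz rule (your use of (b) on monomials is equivalent to iterating (a)). You have supplied considerably more detail than the paper, but the strategy is the same.
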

Points \eqref{produit} and \eqref{composition1} follows from \cite[Theorems A.7, A.8, A.9 and A.12]{KePoVe93}. Point \eqref{composition2} is a consequence of \eqref{produit}.

\medskip

\noindent\emph{Proof of \eqref{Lest}.}

By Claim \ref{KPV} \eqref{produit}, we have
\begin{multline}
 \label{Holde1}
\left\|D_x^{1/2}\left(u(t)W^{\frac{4}{N-2}}\right)\right\|_{\frac{2(N+1)}{N+3}}\\
\leq C\left\{\left\|D_x^{1/2} u(t)\right\|_{\frac{2(N+1)}{N-1}}\left\|W^{\frac{4}{N-2}}\right\|_{\frac{N+1}{2}}+\left\|u(t)\right\|_{\frac{2(N+1)}{N-2}}\left\|D_x^{1/2}\left(W^{\frac{4}{N-2}}\right)\right\|_{\frac{2(N+1)}{5}}\right\}.
\end{multline}
Furthermore, $W$ is a $C^{\infty}$ function on $\RR^N$ such that $W^{\frac{4}{N-2}}\approx 1/|x|^4$, $D_x\left(W^{\frac{4}{N-2}}\right)\approx 1/|x|^5$. Hence $W^{\frac{4}{N-2}}$ belongs to $L^{\frac{N+1}{2}}\cap W^{1,\frac{2(N+1)}{5}}$. As a consequence we can rewrite \eqref{Holde1} as
$$\left\|D_x^{1/2}\left(u(t)W^{\frac{4}{N-2}}\right)\right\|_{\frac{2(N+1)}{N+3}}\leq C\left(\left\|D_x^{1/2} u(t)\right\|_{\frac{2(N+1)}{N-1}}+\Big\|u(t)\Big\|_{\frac{2(N+1)}{N-2}}\right),$$
which gives \eqref{Lest}, using H\"older inequality in time.

We will skip the proof of \eqref{NLest0} which is a direct consequence of H\"older inequality.

\medskip

\noindent\emph{Proof of \eqref{NLest}.}

Fixing $t$, we will show
\begin{multline}
\label{NLestx}
\big\|D_x^{1/2}(R(u)-R(v))\big\|_{\frac{2(N+1)}{N+3}}\lesssim \big\|D_x^{1/2}(u-v)\big\|_{\frac{2(N+1)}{N-1}}\Bigg[\Big\||u|+|v|\Big\|_{\frac{2(N+1)}{N-2}}+ \Big\||u|+|v| \Big\|_{\frac{2(N+1)}{N-2}}^{\frac{4}{N-2}}\Bigg]\\
+\big\|u-v\big\|_{\frac{2(N+1)}{N-2}}
\Bigg[\Big\||u|+|v|\Big\|_{\frac{2(N+1)}{N-2}}+ \Big\||u|+|v| \Big\|_{\frac{2(N+1)}{N-2}}^{\frac{4}{N-2}}\\
+\Big\|\big|D_{x}^{1/2}u\big|
+\big|D_{x}^{1/2}v\big| \Big\|_{\frac{2(N+1)}{N-1}}
+\Big\|\big|D_{x}^{1/2}u\big|+\big|D_{x}^{1/2}v\big| \Big\|_{\frac{2(N+1)}{N-1}}^{\frac{4}{N-2}}\Bigg].
\end{multline}
H\"older inequality in time will yield the desired result.
We have $R(u)=W^{\frac{N+2}{N-2}}J\left(W^{-1}u\right)$, where $J(s)=|1+s|^{\frac{4}{N-2}}(1+s)-1-\frac{N+2}{N-2} s$, $J'(s)=\frac{N+2}{N-2}|1+s|^{\frac{4}{N-2}}-\frac{N+2}{N-2}$. Hence
\begin{equation}
\label{diffR}
R(u)-R(v)=\frac{N+2}{N-2}(u-v)\underbrace{\int_0^1\left( \left|W+v+(u-v)\theta\right|^{\frac{4}{N-2}}-W^{\frac{4}{N-2}}\right)d\theta}_{I(u,v)}.
\end{equation}
Hence, by Claim \ref{KPV} \eqref{produit}, 
$$\Big\|D_x^{1/2}(R(u)-R(v))\Big\|_{\frac{2(N+1)}{N+3}}\lesssim \big\|D_x^{1/2}(u-v)\big\|_{\frac{2(N+1)}{N-1}}\big\|I(u,v)\big\|_{\frac{N+1}{2}}+\big\|u-v\big\|_{\frac{2(N+1)}{N-2}}\big\|D_x^{1/2}I(u,v)\big\|_{\frac{2(N+1)}{5}}.$$
In view of the integral expression of $I(u,v)$, \eqref{NLestx} will follow from the estimates
\begin{gather}
\label{EstReduite}
\Big\|\left|W+h\right|^{\frac{4}{N-2}}-W^{\frac{4}{N-2}}\Big\|_{\frac{N+1}{2}} \lesssim\big\|h \big\|_{\frac{2(N+1)}{N-2}}+  \big\|h \big\|_{\frac{2(N+1)}{N-2}}^{\frac{4}{N-2}}\\
\label{EstReduite2}
\Big\|D_x^{1/2}\left(\left|W+h\right|^{\frac{4}{N-2}}-W^{\frac{4}{N-2}}\right)\Big\|_{\frac{2(N+1)}{5}}
\qquad\qquad\qquad\qquad\\ \notag \qquad\qquad\qquad \qquad \lesssim\big\|h \big\|_{\frac{2(N+1)}{N-2}}+  \big\|h \big\|_{\frac{2(N+1)}{N-2}}^{\frac{4}{N-2}}+  \big\|D_{x}^{1/2}h \big\|_{\frac{2(N+1)}{N-1}}+\big\|D_{x}^{1/2}h \big\|_{\frac{2(N+1)}{N-1}}^{\frac{4}{N-2}}.
\end{gather}
Let us first show \eqref{EstReduite}. By the pointwise bound $\left| |W+h|^{\frac{4}{N-2}}-|W|^{\frac{4}{N-2}}\right|\lesssim W^{\frac{6-N}{N-2}}|h|+|h|^{\frac{4}{N-2}}$, and H\"older inequality
$$ \Big\|\left|W+h\right|^{\frac{4}{N-2}}-W^{\frac{4}{N-2}}\Big\|_{\frac{N+1}{2}}\lesssim 
\big\|W^{\frac{6-N}{N-2}}\big\|_{\frac{2(N+1)}{6-N}}\big\|h\big\|_{\frac{2(N+1)}{N-2}}+\big\|h\big\|_{\frac{2(N+1)}{N-2}}^{\frac{4}{N-2}}.$$
Noting that $\big\|W^{\frac{6-N}{N-2}}\big\|_{\frac{2(N+1)}{6-N}}=\big\|W\big\|_{\frac{2(N+1)}{N-2}}^{\frac{6-N}{N-2}}<\infty$, we get \eqref{EstReduite}. It remains to show \eqref{EstReduite2}. We will distinguish two cases.
\medskip

\noindent\emph{First case: $N=3$ or $N=4$.}
Then $\frac{4}{N-2}\in\{2,4\}$. By Claim \ref{KPV}, \eqref{composition2},
\begin{multline*} \Big\|D_x^{1/2}\left(\left(W+h\right)^{\frac{4}{N-2}}-W^{\frac{4}{N-2}}\right)\Big\|_{\frac{2(N+1)}{5}}
\\
\lesssim\left(\Big\|(W+h)^{\frac{6-N}{N-2}}\Big\|_{\frac{2(N+1)}{6-N}}+\Big\|h^{\frac{6-N}{N-2}}\Big\|_{\frac{2(N+1)}{6-N}}\right)\Big\|D_x^{1/2}h\Big\|_{\frac{2(N+1)}{N-1}}\\
+\left(\Big\|(W+h)^{\frac{8-2N}{N-2}}\Big\|_{\frac{2(N+1)}{8-2N}}+\Big\| h^{\frac{8-2N}{N-2}}\Big\|_{\frac{2(N+1)}{8-2N}}\right)\\
\times
\left(\Big\|D_x^{1/2}(W+h)\Big\|_{\frac{2(N+1)}{N-1}}+\Big\|D_x^{1/2}h\Big\|_{\frac{2(N+1)}{N-1}}\right)\Big\|h\Big\|_{\frac{2(N+1)}{N-2}}.
\end{multline*}
Hence
\begin{multline*} \Big\|D_x^{1/2}\left(\left(W+h\right)^{\frac{4}{N-2}}-W^{\frac{4}{N-2}}\right)\Big\|_{\frac{2(N+1)}{5}}
\lesssim\left(1+\Big\|h\Big\|_{\frac{2(N+1)}{N-2}}\right)^{\frac{6-N}{N-2}}\Big\|D_x^{1/2}h\Big\|_{\frac{2(N+1)}{N-1}}\\
\qquad\qquad\qquad\qquad\qquad\qquad\qquad+\left(1+\Big\| h\Big\|_{\frac{2(N+1)}{N-2}}^{\frac{8-2N}{N-2}}\right)
\left(1+\Big\|D_x^{1/2}h\Big\|_{\frac{2(N+1)}{N-1}}\right)\Big\|h\Big\|_{\frac{2(N+1)}{N-2}}\\
%\lesssim \Big\|h\Big\|_{\frac{2(N+1)}{N-2}}+\Big\| h\Big\|_{\frac{2(N+1)}{N-2}}^{\frac{6-N}{N-2}}+\Big\|D_x^{1/2}h\Big\|_{\frac{2(N+1)}{N-1}}\\
%\qquad\qquad\qquad\qquad+\Big\|h\Big\|_{\frac{2(N+1)}{N-2}}^{\frac{6-N}{N-2}}\Big\|D_x^{1/2}h\Big\|_{\frac{2(N+1)}{N-1}}+\Big\|h\Big\|_{\frac{2(N+1)}{N-2}}\Big\|D_x^{1/2}h\Big\|_{\frac{2(N+1)}{N-1}}\\
\lesssim\Big\|h\Big\|_{\frac{2(N+1)}{N-2}}+\Big\| h\Big\|_{\frac{2(N+1)}{N-2}}^{\frac{4}{N-2}}+\Big\|D_x^{1/2}h\Big\|_{\frac{2(N+1)}{N-1}}+\Big\|D_x^{1/2}h\Big\|_{\frac{2(N+1)}{N-1}}^{\frac{4}{N-2}},
\end{multline*}
by the convexity inequality $AB\leq \frac{6-N}{4}A^{\frac{4}{6-N}}+\frac{N-2}{4}B^{\frac{4}{N-2}}$.
This yields \eqref{EstReduite2}, and concludes the proof of \eqref{NLestx} (thus of \eqref{NLest}) when $N=3$ or $N=4$.

\medskip

\noindent\emph{Second case: $N=5$.}

In this case, we must bound $\left\| D_x^{1/2}\left( |W+h|^{4/3}-W^{4/3}\right)\right\|_{\frac{12}{5}}$ by sum of powers of $\|h\|_{4}$ and $\|D_x^{1/2}h\|_3$. Note that Claim \ref{KPV} \eqref{composition2} is no longer available. 
We have
\begin{equation}
\label{decomposition5}
|W+h|^{4/3}-W^{4/3}=W^{4/3}F\big(W^{-1}h\big)+|h|^{4/3},\quad F(s)=|1+s|^{4/3}-1-|s|^{4/3}.
\end{equation}
By Claim \ref{KPV}, \eqref{composition1}
\begin{equation}
\label{terme1}
 \Big\|D_x^{1/2}|h|^{4/3}\Big\|_{\frac{12}{5}}\lesssim \Big\||h|^{1/3}\Big\|_{12}\Big\|D_x^{1/2}h\Big\|_{3}\lesssim \Big\|h\Big\|_{4}^{1/3}\Big\|D_x^{1/2}h\Big\|_{3}\lesssim \Big\|h\Big\|_{4}^{4/3}+\Big\|D_x^{1/2}h\Big\|_{3}^{4/3},
\end{equation}
by the convexity inequality $AB\leq \frac 34 A^{4/3}+\frac 14 B^4$.

Note that $F$ is $C^1$ and that $F'$ is bounded. In order to apply Claim \ref{KPV} \eqref{composition1} to $W^{4/3}F\big(W^{-1}h\big)$ we will need a dyadic decomposition of $\RR^5$. Let $\varphi\in C^{\infty}(\RR^5)$ such that $\varphi(x)=1$ if $|x|\leq 1$ and $\varphi(x)=0$ if $|x|\geq 2$. 
Define $\psi(x):=\varphi(x/2)-\varphi(x)$, so that $\supp \psi\subset\{1\leq |x|\leq 4\}$. Let $\psi_k(x):=\varphi(x/2^{k-1})$ for $k\geq 1$ and $\psi_0(x):=\varphi(x)$. 
Then
\begin{equation*}
\supp \psi_k\subset\left\{\frac{1}{2^{k-1}}\leq |x|\leq \frac{1}{2^{k+1}}\right\},\;k\geq 1,\; \supp\psi_0\subset\{|x|\leq 2\};\quad \sum_{k\geq 0} \psi_k(x)=1.
\end{equation*}
Choose also $\tpsi\in C^{\infty}_0(\RR^5)$ such that $\supp\tpsi\subset\{1/2\leq |x|\leq 8\}$ and $\tpsi(x)=1$ on $\{2\leq |x|\leq 4\}$. Let $\tpsi_k(x):=\tpsi(x/2^{k-1})$ for $k\geq 1$ and $\psi_0(x):=\varphi(x/2)$. Then 
\begin{equation*}
\supp \tpsi_k\subset\left\{\frac{1}{2^{k-2}}\leq |x|\leq \frac{1}{2^{k+2}}\right\},\;k\geq 1,\; \supp\tpsi_0\subset\{|x|\leq 4\};\quad
x\in \supp \tpsi_k\Longrightarrow \psi_k(x)=1.
\end{equation*}
We have
\begin{equation}
\label{Fdyadic}
W^{4/3}F\big(W^{-1}h\big)=\sum_{k\geq 0} \psi_kW^{4/3}F\big(W^{-1}h\big)=\sum_{k\geq 0} \psi_kW^{4/3}F\left(W^{-1}\tpsi_k(x)h\right).
\end{equation}
We leave the proof of the following estimates which follow from the explicit expression of $W$ and scaling arguments to the reader.
\begin{claim}
\label{ClaimW5}
For all $p\in [1,\infty]$ and for all $k\geq 0$,
\begin{gather*}
\Big\|\psi_k W^{4/3}\Big\|_{p}\lesssim 2^{(-4+5/p)k},\quad\Big\|D_x^{1/2}\big(\psi_k W^{4/3}\big)\Big\|_{p}\lesssim 2^{(-9/2+5/p)k},\\
\Big\|\tpsi_k W^{-1}\Big\|_p\lesssim 2^{(3+5/p)k},\quad \Big\|D_x^{1/2}\big(\tpsi_k W^{-1}\big)\Big\|_p\lesssim 2^{(5/2+5/p)k}.
\end{gather*}
\end{claim}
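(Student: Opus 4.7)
The plan is a scaling/size argument. For $k = 0$, each of the four functions is smooth and compactly supported on a fixed ball (recall that $W$ is smooth and strictly positive on $\RR^5$, so $W^{-1}$ is smooth there too), hence all four estimates of Claim \ref{ClaimW5} are trivial. For $k \geq 1$ set $\lambda := 2^{k-1}$ and, given any function $u$, write $\tilde u(y) := u(\lambda y)$. The elementary scaling identities
\begin{equation*}
\|u\|_{L^p(\RR^5)} = \lambda^{5/p}\|\tilde u\|_{L^p(\RR^5)},\qquad \|D_x^{1/2} u\|_{L^p(\RR^5)} = \lambda^{-1/2+5/p}\|D_y^{1/2}\tilde u\|_{L^p(\RR^5)}
\end{equation*}
reduce each of the four estimates to a uniform-in-$\lambda$ bound on $\tilde u$ at the fixed scale one.

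With $u = \psi_k W^{4/3}$ one has $\tilde u(y) = \psi(y)\,W^{4/3}(\lambda y)$, supported in the fixed annulus $\supp\psi$. Using the explicit expression $W^{4/3}(z) = (1 + |z|^2/15)^{-2}$, on that annulus
\begin{equation*}
\tilde u(y) = \lambda^{-4}\,\psi(y)\bigl(\lambda^{-2} + |y|^2/15\bigr)^{-2} = \lambda^{-4}\, H_\lambda(y),
\end{equation*}
where $(H_\lambda)_{\lambda \geq 1}$ is a bounded family in $C^\infty_0(\RR^5)$. Analogously, for $u = \tpsi_k W^{-1}$ one computes
\begin{equation*}
\tilde v(y) = \lambda^3\,\tpsi(y)\bigl(\lambda^{-2} + |y|^2/15\bigr)^{3/2} = \lambda^3\, G_\lambda(y),
\end{equation*}
with $(G_\lambda)_{\lambda \geq 1}$ also bounded in $C^\infty_0(\RR^5)$. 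Trivially $\|\tilde u\|_{L^p} \lesssim \lambda^{-4}$ and $\|\tilde v\|_{L^p} \lesssim \lambda^3$ for every $p \in [1,\infty]$, and inserting these into the first scaling identity yields the two ``undifferentiated'' estimates of the claim.

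For the $D_x^{1/2}$ estimates it remains to show that $\|D_y^{1/2} F_\lambda\|_{L^p(\RR^5)}$ is uniformly bounded in $\lambda$ for any family $(F_\lambda)$ bounded in $C^\infty_0$ on a fixed compact set. For $1 < p < \infty$ this is immediate from the continuity of $D_y^{1/2} : W^{1,p}(\RR^5) \to L^p(\RR^5)$. The endpoints $p = 1$ and $p = \infty$ are the only technical point and constitute my expected main obstacle; I would handle them by inspecting directly the convolution of $F_\lambda$ with the homogeneous kernel of order $|y|^{-N-1/2}$ representing $D_y^{1/2}$, using the smoothness and uniform compact support of $F_\lambda$ to conclude that the result is uniformly bounded and decays at infinity like $|y|^{-N-1/2}$, hence lies in $L^1\cap L^\infty$ with norms uniform in $\lambda$. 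Feeding the resulting bounds $\|D_y^{1/2}\tilde u\|_{L^p} \lesssim \lambda^{-4}$ and $\|D_y^{1/2}\tilde v\|_{L^p} \lesssim \lambda^3$ back into the second scaling identity yields the remaining two estimates, since $\lambda^{-1/2+5/p}\cdot\lambda^{-4} = 2^{(-9/2+5/p)(k-1)} \lesssim 2^{(-9/2+5/p)k}$ and similarly for $\tilde v$.
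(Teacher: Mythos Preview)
Your proof is correct and follows exactly the approach the paper indicates: the paper does not actually prove this claim but simply states that it ``follow[s] from the explicit expression of $W$ and scaling arguments'' and leaves it to the reader, and your rescaling by $\lambda = 2^{k-1}$ together with the explicit formula for $W$ is precisely that. Your treatment of the endpoints $p=1,\infty$ via the hypersingular integral representation of $D_x^{1/2}$ is a bit informal but the idea is sound and the resulting $|y|^{-N-1/2}$ decay indeed gives uniform $L^1\cap L^\infty$ bounds.
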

By Claim \ref{KPV} \eqref{produit},
\begin{multline}
\label{dyadic1}
\Big\|D_x^{1/2}\big(\psi_kW^{4/3}F(W^{-1}\tpsi_kh)\big)\Big\|_{\frac{12}{5}}\\ 
\lesssim \Big\|D_x^{1/2}\big(\psi_kW^{4/3}\big)\Big\|_{6}\Big\|F(W^{-1}\tpsi_kh)\Big\|_{4}+\Big\|\psi_kW^{4/3}\Big\|_{12}\Big\|D_x^{1/2}\big(F(W^{-1}\tpsi_kh)\big)\Big\|_{3}.
\end{multline}
Note that $|F(s)|\lesssim |s|$, so that, in view of Claim \ref{ClaimW5},
$$ \Big\|F(W^{-1}\tpsi_kh)\Big\|_{4}\lesssim \Big\|W^{-1}\tpsi_kh\Big\|_{4}\lesssim 2^{3k}\big\|h\big\|_4 $$
Thus by Claim \ref{ClaimW5} again,
\begin{equation}
\label{dyadic2}
\Big\|D_x^{1/2}\big(\psi_kW^{4/3}\big)\Big\|_{6}\Big\|F(W^{-1}\tpsi_kh)\Big\|_{4}\lesssim 2^{(-9/2+5/6+3)k}=2^{-\frac 23k}\big\|h\big\|_4.
\end{equation}
Furthermore, $F'$ being bounded, by Claim \ref{KPV} \eqref{composition1}, then \eqref{produit}
\begin{multline*}
%\label{dyadic3}
\Big\|D_x^{1/2}\big(F(W^{-1}\tpsi_kh)\big)\Big\|_{3}\lesssim  \Big\|D_x^{1/2}\big(W^{-1}\tpsi_kh\big)\Big\|_{3}\\
\lesssim \Big\|D_x^{1/2}\big(W^{-1}\tpsi_k\big)\Big\|_{12}\Big\|h\Big\|_4+\Big\|W^{-1}\tpsi_k\Big\|_{\infty}\Big\|D_x^{1/2}h\Big\|_{3}.
\end{multline*}
Thus by Claim \ref{ClaimW5},
\begin{multline}
\label{dyadic4}
\Big\|\psi_kW^{4/3}\Big\|_{12}\Big\|D_x^{1/2}\big(F(W^{-1}\tpsi_kh)\big)\Big\|_{3}\lesssim 2^{(-4+5/12)k}\left(2^{(3+5/12)k}\big\|h\big\|_4+2^{3k}\big\|D_x^{1/2}h\big\|_{3}\right)\\
\lesssim 2^{-\frac 16k}\big\|h\big\|_4+2^{-\frac{7}{12}k}\big\|D_x^{1/2}h\big\|_{3}.
\end{multline} 
By \eqref{dyadic1}, \eqref{dyadic2} and \eqref{dyadic4},
\begin{equation}
\label{dyadic5}
\Big\|D_x^{1/2}\big(W^{4/3}F(W^{-1}h)\big)\Big\|_{\frac{12}{5}}\lesssim \sum_{k\geq 0}
\Big\|D_x^{1/2}\big(\psi_kW^{4/3}F(W^{-1}\tpsi_kh)\big)\Big\|_{\frac{12}{5}}\lesssim \big\|h\big\|_{4}+\big\|D_x^{1/2}h\big\|_3.
\end{equation}
In view of \eqref{decomposition5}, we get, by \eqref{terme1} and \eqref{dyadic5}
$$ \Big\|D_x^{1/2}\big(|W+h|^{4/3}-W^{4/3}\big)\Big\|_{\frac{12}{5}}\lesssim \big\|h\big\|_{4}^{4/3}+\big\|D_x^{1/2}h\big\|_{3}^{4/3}+\big\|h\big\|_{4}+\big\|D_x^{1/2}h\big\|_3.$$
This yields \eqref{EstReduite2}, concluding the proof of \eqref{NLest} in the case $N=5$.
\qed 
\section{Derivative of $g_R$}

\begin{claim}
\label{calculs}
Let $u$ be a solution of \eqref{CP} such that $E(u_0,u_1)=E(W,0)$ and $g_R$ be defined by \eqref{defgR}.
There exist $C^{\infty}$ real-valued functions on $\RR^N$, $a_R^{jk}$, $b_R^1$, $b_R^2$, $b_R^3$, supported in $\{|x|\geq R\}$, bounded independently of $R$ and such that 
$$
g_R'(t)=\frac{1}{N-2}\int |\partial_t u|^2dx-\frac{1}{N-2}\left(\int |\nabla W|^2dx-\int |\nabla u|^2dx\right)+A_{R}(u,\partial_t u).$$
where 
\begin{equation}
\label{defAR}
A_{R}(u,\partial_t u):=\sum_{jk} \int a_{R}^{jk}\partial_j u\partial_k u\,dx+\int b_{R}^1 (\partial_t u)^2+b_{R}^2 u^{2^*}+\frac{1}{|x|^2}b_{R}^3u^2 dx.
\end{equation}
\end{claim}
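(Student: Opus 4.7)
\medskip

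\noindent\textbf{Proof plan for Claim \ref{calculs}.}

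The plan is to carry out a direct Morawetz-type computation with the cutoff: differentiate $g_R$ in time, substitute the equation \eqref{CP} to replace $\partial_t^2 u$, perform integration by parts in $x$, and track cutoff errors separately. All boundary integrals vanish since $\psi_R$ and $\varphi_R$ are compactly supported, and all error terms produced by derivatives falling on $\varphi_R$ are supported in $\{|x|\geq R\}$ because $\varphi_R\equiv 1$ on $\{|x|\leq R\}$. The conservation $E(u,\partial_t u)=E(W,0)$ will convert an $|u|^{2^*}$ integral into the $\|\nabla W\|_2^2$ term that appears in the claim.

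Differentiating $g_R$ produces four contributions. The first, $\int \psi_R\cdot\nabla(\partial_tu)\,\partial_t u$, is handled by the identity $\psi_R\cdot\nabla((\partial_tu)^2)=2\psi_R\cdot\nabla(\partial_tu)\,\partial_t u$ and one integration by parts, using $\mathrm{div}(\psi_R)=N\varphi_R+x\cdot\nabla\varphi_R$. The second, $\int\psi_R\cdot\nabla u\,\partial_t^2 u$, is split via \eqref{CP} into a linear piece $\int\psi_R\cdot\nabla u\,\Delta u$, treated by two integrations by parts to produce $-\int\partial_j\psi_R^k\partial_ku\,\partial_ju+\tfrac12\int\mathrm{div}(\psi_R)|\nabla u|^2$, and a nonlinear piece $\int\psi_R\cdot\nabla u\,|u|^{2^*-2}u=\tfrac{1}{2^*}\int\psi_R\cdot\nabla(|u|^{2^*})=-\tfrac{1}{2^*}\int\mathrm{div}(\psi_R)|u|^{2^*}$. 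The third contribution $\tfrac{N-1}{2}\int\varphi_R(\partial_tu)^2$ is left as is, and the fourth $\tfrac{N-1}{2}\int\varphi_Ru\,\partial_t^2u$ is again split via \eqref{CP} into $-\tfrac{N-1}{2}\int\varphi_R|\nabla u|^2+\tfrac{N-1}{4}\int\Delta\varphi_R\,u^2$ plus $\tfrac{N-1}{2}\int\varphi_R|u|^{2^*}$.

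Collecting coefficients of $\int\varphi_R(\partial_tu)^2$, $\int\varphi_R|\nabla u|^2$, and $\int\varphi_R|u|^{2^*}$, one finds $-\tfrac{N}{2}+\tfrac{N-1}{2}=-\tfrac12$, $\tfrac{N-2}{2}-\tfrac{N-1}{2}=-\tfrac12$, and $-\tfrac{N-2}{2}+\tfrac{N-1}{2}=+\tfrac12$ respectively, so the principal part equals $-\tfrac12\int\varphi_R\bigl[(\partial_tu)^2+|\nabla u|^2-|u|^{2^*}\bigr]$. Writing $\varphi_R=1-(1-\varphi_R)$ extracts the full integrals and leaves a remainder supported in $\{|x|\geq R\}$. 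The identity $E(u,\partial_tu)=E(W,0)$ yields $\int|u|^{2^*}=\tfrac{2^*}{2}\bigl(\int(\partial_tu)^2+\int|\nabla u|^2\bigr)-2^*E(W,0)$; together with $\tfrac{2^*-2}{4}=\tfrac{1}{N-2}$ and $2^*E(W,0)=\tfrac{N}{N-2}\cdot\tfrac{1}{N}\|\nabla W\|_2^2=\tfrac{1}{N-2}\|\nabla W\|_2^2$, this transforms the principal part into exactly $\tfrac{1}{N-2}\int|\partial_tu|^2-\tfrac{1}{N-2}\bigl(\int|\nabla W|^2-\int|\nabla u|^2\bigr)$.

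It remains to package the remainder as $A_R(u,\partial_tu)$. The error terms are of four types: $(x\cdot\nabla\varphi_R)(\partial_tu)^2$ from the first step and the $(1-\varphi_R)(\partial_tu)^2$ contribution, giving $b_R^1$; the combination $x^k\partial_j\varphi_R$ together with $\tfrac12(x\cdot\nabla\varphi_R)\delta_{jk}$ and the $(1-\varphi_R)|\nabla u|^2$ piece, giving the matrix $a_R^{jk}$; $(x\cdot\nabla\varphi_R)|u|^{2^*}$ plus the $(1-\varphi_R)|u|^{2^*}$ piece, giving $b_R^2$; and $\Delta\varphi_R\,u^2$, giving the $\tfrac{1}{|x|^2}b_R^3u^2$ term. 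The boundedness uniform in $R$ follows from $\partial_j\varphi_R=R^{-1}(\partial_j\varphi)(x/R)$, so $x^k\partial_j\varphi_R$ and $x\cdot\nabla\varphi_R$ are bounded on the annulus $\{R\leq|x|\leq 2R\}$ where they live, and $\Delta\varphi_R=R^{-2}(\Delta\varphi)(x/R)$ so $|x|^2\Delta\varphi_R$ is bounded there. The only real work is bookkeeping of signs and coefficients; there is no genuine obstacle.
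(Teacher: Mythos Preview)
Your proof is correct and follows essentially the same approach as the paper: a direct Morawetz computation with the cutoff, integration by parts to isolate the principal terms with coefficients $-\tfrac12$, $-\tfrac12$, $+\tfrac12$ on $(\partial_t u)^2$, $|\nabla u|^2$, $|u|^{2^*}$, and then the energy identity $E(u,\partial_t u)=E(W,0)$ together with $E(W,0)=\tfrac1N\|\nabla W\|_2^2$ to convert to the stated form. The bookkeeping of the remainder terms and their uniform-in-$R$ bounds is exactly what the paper does.
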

\begin{proof}
\begin{equation}
\label{calcul1}
\frac{d}{dt} \int \psi_R\cdot \nabla u\, \partial_t u\,dx=\int \psi_R\cdot \nabla u\left(\Delta u+|u|^{\frac{N+2}{N-2}}u\right)dx+\int \psi_R\cdot \nabla \partial_t u\, \partial_t u\,dx.
\end{equation}
Furthermore, denoting by $\psi_{Rj}$, $j=1\ldots N$, the coordinates of $\psi_R$,
\begin{align*}
\int \psi_R\cdot \nabla u(\Delta u)dx&=\sum_{j,k} \int \psi_{Rj}\partial_j u\partial_{k}^2 u\,dx\\
&=-\frac 12 \sum_j \int \frac{\partial \psi_{Rj}}{\partial x_j}(\partial_j u)^2dx+\frac 12 \sum_{\substack{j,k\\ j\neq k}} \int \frac{\partial \psi_{Rj}}{\partial x_k}(\partial_j u)^2dx-\sum_{\substack{j,k\\ j\neq k}} \int \frac{\partial \psi_{Rj}}{\partial x_k}\partial_j u \partial_k u\,dx.
\end{align*}
Note that if $|x|\leq R$, $\frac{\partial \psi_{Rj}}{\partial x_j}(x)=1$ and $\frac{\partial \psi_{Rj}}{\partial x_k}(x)=0$ ($j\neq k$). Thus
\begin{equation*}
\int \psi_R\cdot \nabla u (\Delta u)dx=\frac{N-2}{2} \int |\nabla u|^2dx+\sum_{jk} \int_{|x|\geq R} \tilde{a}_{Rjk} \partial_j u\partial_k u\,dx,
\end{equation*}
where the $\tilde{a}_{Rjk}$ are bounded independently of $R$, $C^{\infty}$, and supported in $|x|\geq R$.
By similar integration by parts on the other terms of \eqref{calcul1}, we get
\begin{equation}
\label{calcul2}
\frac{d}{dt}  \int \psi_R(x)\cdot \nabla u \partial_t u=\frac{N-2}{2}\int |\nabla u|^2dx-\frac{N-2}{2}\int |u|^{2^*}dx-\frac{N}{2} \int  (\partial_t u)^2dx +\widetilde{A}_R(u,\partial_t u),
\end{equation}
where $\widetilde{A}_R(u,\partial_t u)$ is a sum of integrals of the desired form \eqref{defAR}.

Furthermore,
\begin{align*}
\frac{N-1}{2}\frac{d}{dt} \int \varphi_R u\partial_t u\,dx&=\frac{N-1}{2}\int \varphi_R (\partial_t u)^2+ \varphi_R u\left(\Delta u +|u|^{\frac{N+2}{N-2}}u\right)dx\\
&=\frac{N-1}{2}\int \varphi_R (\partial_t u)^2-\varphi_R |\nabla u|^2 +\frac{1}{2} (\Delta \varphi_R) u^2+\varphi_R |u|^{2^*}dx.
\end{align*}
Noting that for $|x|\leq R$, $\varphi_R=1$ and $\Delta \varphi_R=0$, and that if $|x|\geq 2R$, $\Delta \varphi_R=0$, we get 
\begin{equation}
\label{calcul3}
\frac{N-1}{2}\frac{d}{dt} \int \varphi_R u\partial_t u\,dx=\frac{N-1}{2}\int-|\nabla u|^2+ |u|^{2^*}+(\partial_t u)^2dx +\widehat{A}_R(u,\partial_t u),
\end{equation}
where $\widehat{A}_R(u,\partial_tu)$ is again of the form \eqref{defAR}. 

Summing up \eqref{calcul2} and \eqref{calcul3}, we obtain
\begin{equation*}
g'_R(t)=-\frac 12 \int |\nabla u|^2dx+\frac 12 \int |u|^{2^*}dx-\frac{1}{2}\int (\partial_t u)^2dx+A_R(u,\partial_t u),
\end{equation*}
where $A_R$ is defined by \eqref{defAR} for some functions $a_R^{jk}$, $b_R^1$, $b_R^2$, $b_R^3$. To conclude the proof, note that $E(u,\partial_tu)=E(W,0)$ implies
$$ \frac 12 \int |u|^{2^*}=\frac{N}{2(N-2)}\int |\nabla u|^2dx+\frac{N}{2(N-2)}\int (\partial_t u)^2dx-\frac{N}{N-2}E(W,0),$$
and recall that $E(W,0)=\frac{1}{N}\|\nabla W\|^2_2$, which gives \eqref{calcul1}.
\end{proof}

\bibliographystyle{alpha} %plain, acm, apalike, unsrt?
\bibliography{wavecrit5}

\end{document}